\newcommand\specl{dE_{\mathbf{\sqrt{H}}}(\lambda)}
\newcommand\El{E_{\mathbf{\sqrt{H}}}(\lambda)}
\newcommand\Em{E_{\mathbf{\sqrt{H}}}(\mu)}
\newcommand\RR{\mathbb{R}}
\newcommand\ZZ{\mathbb{Z}}
\newcommand\CC{\mathbb{C}}
\newcommand\NN{\mathbb{N}}
\newcommand\Id{\mathrm{Id}}
\newcommand\MMksc{M^2_{k, \sca}}
\newcommand\MMsc{M^2_{\sca}}
\newcommand\MMkb{M^2_{k, b}}
\newcommand\Omegakbh{\Omega_{k,b}^{1/2}}
\newcommand\diagb{\mathrm{diag}_b}
\newcommand\diagsc{\mathrm{diag}_{sc}}
\newcommand\conic{\mathrm{conic}}
\newcommand\lambdabar{{\overline{\lambda}}}
\numberwithin{equation}{section}
\newtheorem{theorem}{Theorem}[section]
\newtheorem{proposition}[theorem]{Proposition}%[section]
\newtheorem{lemma}[theorem]{Lemma}%[section]
\newtheorem{corollary}[theorem]{Corollary}%[section]
\theoremstyle{remark}\newtheorem{remark}[theorem]{Remark}%[section]
\newcommand{\zf}{\mathrm{zf}}
\newcommand{\bfo}{\mathrm{bf}_0}
\newcommand{\rbo}{\mathrm{rb}_0}
\newcommand{\lbo}{\mathrm{lb}_0}
\newcommand{\lb}{\mathrm{lb}}
\newcommand{\rb}{\mathrm{rb}}
\newcommand{\bfa}{\mathrm{bf}}
\newcommand{\bfc}{\mathrm{bf}}
\newcommand{\sca}{\mathrm{sc}}
\newcommand{\mf}{\mathrm{mf}}
\begin{document}
\title[Global-in-time Strichartz estimates]{Global-in-time Strichartz estimates on non-trapping asymptotically conic manifolds}

\author{Andrew Hassell}
\address{Department of Mathematics, Australian National University, Canberra ACT 0200, Australia}
\email{Andrew.Hassell@anu.edu.au}

\author{Junyong Zhang}
\address{Department of Mathematics, Beijing Institute of Technology, Beijing 100081 China,
and Department of Mathematics, Australian National University,
Canberra ACT 0200, Australia} \email{zhang\_junyong@bit.edu.cn;
junyong.zhang@anu.edu.au}

%\tableofcontents

\begin{abstract}
We prove global-in-time Strichartz estimates without loss of
derivatives for the solution of the Schr\"odinger equation on a class
of non-trapping asymptotically conic manifolds. We obtain estimates for the full set of admissible indices, including the endpoint, in both the homogeneous and inhomogeneous cases.
This result improves on
the results by Tao, Wunsch and the first author  in \cite{HTW} and \cite{Miz}, which are local in time, as well as the results of the second author in \cite{Zhang}, which are
global in time but with a loss of angular derivatives. In addition, the endpoint inhomogeneous estimate is a strengthened version of the uniform Sobolev estimate recently proved by  Guillarmou and the first author \cite{GH}.
\end{abstract}

\maketitle

\section{Introduction}\label{sec:intro}

Strichartz estimates are an essential tool for studying the behaviour of solutions to nonlinear Schr\"odinger equations, nonlinear wave equations, and other nonlinear dispersive equations. In particular, global-in-time Strichartz estimates are needed to show global well-posedness and scattering for these equations. The purpose of this article is to prove global-in-time Strichartz estimates
for the Schr\"odinger equation on asymptotically conic nontrapping manifolds.

Let $(M^\circ, g)$ be a Riemannian manifold of dimension $n\geq2$, and let $I \subset \RR$ be a time interval.
Strichartz estimates are a family of dispersive estimates on
solutions $u(t,z)$: $I\times M^\circ\rightarrow \mathbb{C}$ to the Schr\"odinger equation
\begin{equation}
i\partial_tu+\Delta_g u=0, \quad u(0)=u_0(z)
\label{gen-Str}\end{equation}
where $\Delta_g$ denotes the Laplace-Beltrami operator on $(M^\circ,
g)$. The general Strichartz estimates state that
\begin{equation*}
\|u(t,z)\|_{L^q_tL^r_z(I\times M^\circ)}\leq
C\|u_0\|_{H^s(M^\circ)},
\end{equation*}
where $H^s$ denotes the $L^2$-Sobolev space over $M^\circ$, and
$(q, r)$ is an \emph{admissible pair}, i.e.
\begin{equation}\label{1.1}
2\leq q,r\leq\infty, \quad 2/q+n/r=n/2,\quad (q,r,n)\neq(2,\infty,2).
\end{equation}
%If \eqref{gen-Str} holds, then we say that Strichartz estimates hold on $M^\circ$ with a loss of $s$ derivatives, or without loss of derivatives, if $s = 0$.
It is well known that \eqref{gen-Str} holds for $(M^\circ, g) = (\RR^n,  \delta)$ with $s=0$ and $I = \RR$.

In this paper, we continue the investigations carried out in
\cite{HTW1,HTW} concerning Strichartz inequalities on a class of
non-Euclidean spaces, that is, smooth complete noncompact
asymptotically conic Riemannian manifolds $(M^\circ,g)$ which
satisfy a non-trapping condition.  Here, `asymptotically conic' means that $M^\circ$ has an end of the form $(r_0, \infty)_r \times Y$, with metric asymptotic to $dr^2 + r^2 h$ as $r \to \infty$, where $(Y, h)$ is a closed Riemannian manifold of dimension $n-1$ (a more precise definition is given below).
%
%is
%meant in the sense that $M^\circ$ can be compactified to a manifold
%with boundary $M$ such that $g$ becomes a scattering metric on $M$.
In \cite{HTW}, the first author, Tao and Wunsch established the
local in time Strichartz inequalities
\begin{equation}\label{1.2}
\|e^{it\Delta_g}u_0\|_{L^q_tL^r_z([0,1]\times M^\circ)}\leq
C\|u_0\|_{L^2(M^\circ)}.
\end{equation}
In this paper, we establish the same inequality on the full time interval, $t \in \RR$.
%One of the main reasons for doing this is
%to obtain results about the scattering theory for the nonlinear
%Schr\"odinger equations in this setting, which is a topic we plan to address in future articles.
To treat an infinite time interval, the method of \cite{HTW} no longer works, and we take a completely new approach in this paper (see Section~\ref{intro:strategy}).
Although phrased in terms of asymptotically conic manifolds we emphasize that our results apply in particular to

\vskip 3pt

\noindent$\bullet$ Schr\"odinger operators $\Delta + V$ on $\RR^n$, with $V$ suitably regular and decaying at infinity;

\vskip 1pt

\noindent$\bullet$ nontrapping metric perturbations of flat Euclidean space, with the perturbation suitably regular and decaying at infinity.

\subsection{Geometric setting}\label{subsec:geomsetting} Let us recall the asymptotically conic geometric
setting, which is the same as in \cite{GHS1,GHS2,HW1,HTW}. Let
$(M^\circ,g)$ be a complete noncompact Riemannian manifold of
dimension $n\geq2$ with one end, diffeomorphic to $(0,\infty)\times
Y$ where $Y$ is a smooth compact connected manifold without
boundary. Moreover, we assume $(M^\circ,g)$ is asymptotically conic
which means that $M^\circ$ can be compactified to a manifold $M$
with boundary $\partial M=Y$ such that the metric $g$ becomes a
scattering metric on $M$. That is, in a
collar neighborhood $[0,\epsilon)_x\times \partial M$ of $\partial M$, $g$   takes
the form
\begin{equation}\label{1.3}
g=\frac{\mathrm{d}x^2}{x^4}+\frac{h(x)}{x^2}=\frac{\mathrm{d}x^2}{x^4}+\frac{\sum
h_{jk}(x,y)dy^jdy^k}{x^2},
\end{equation}
where $x\in C^{\infty}(M)$ is a boundary defining function for
$\partial M$ and $h$ is a smooth family of metrics on $Y$. Here we
use $y=(y_1,\cdots,y_{n-1})$ for local coordinates on $Y=\partial
M$, and the local coordinates $(x,y)$ on $M$ near $\partial M$. Away
from $\partial M$, we use $z=(z_1,\cdots,z_n)$ to denote the local
coordinates.
%If $h_{jk}(x,y)=h_{jk}(y)$ independent of $x$, we say $M$ is perfectly conic near infinity.
Moreover if every geodesic
$z(s)$ in $M$ reaches $Y$ as $s\rightarrow\pm\infty$, we say $M$ is
nontrapping. The function $r:=1/x$ near $x=0$ can be thought of as a
``radial" variable near infinity and $y = (y_1, \dots, y_{n-1})$ can be regarded as  $n-1$
``angular" variables. Rewriting \eqref{1.3} using coordinates $(r, y)$, we see that  the metric is asymptotic to the exact
conic metric $dr^2+r^2h(0)$ on $(r_0,\infty)_r\times Y$ as $r\rightarrow\infty$.

The Euclidean space $M^\circ=\mathbb{R}^n$, or any compactly supported perturbation of this metric,  is an example of an
asymptotically conic manifold with $Y$ equal to $\mathbb{S}^{n-1}$ endowed with
the standard metric.
% It is also an exact metric cone. In fact, it is
%the unique $n$-dimensional metric cone that is also asymptotically
%conic: every other metric cone has a Riemannian singularity at the
%cone point (for example, the curvature blows up at the cone point).
%If $Y$ is not diffeomorphic to $\mathbb{S}^{n-1}$, then it also has
%a topological singularity there. \vspace{0.2cm}

\vspace{0.2cm}

Let $(M^\circ,g)$ be an asymptotically conic manifold. The complex
Hilbert space $L^2(M^\circ)$ is given by the inner product
\begin{equation*}
\langle f_1,
f_2\rangle_{L^2(M^\circ)}=\int_{M^\circ}f_1(z)\overline{f_2(z)}dg(z)
\end{equation*}
where $dg(z)=\sqrt{g}dz$ is the measure  induced by the metric $g$. Let
$\Delta_g=\nabla^*\nabla$ be the Laplace-Beltrami operator on
$M$; our sign convention is that $\Delta_g$ is a positive operator.  Let $V$ be a real potential function on $M$ such that
\begin{equation}\label{1.4}
V\in C^\infty(M),~ V(x,y)=O(x^3) ~\text{as}~x\rightarrow0.
\end{equation}
%and $V_0:=(x^{-2}V)|_{\partial M}$ satisfies
%\begin{equation}\label{1.5}
%\Delta_{\partial M}+(n-2)^2/4+V_0>0~\text{on}~L^2(\partial M, h(0)).
%\end{equation}
%Here $\Delta_{\partial M}$ is the positive Laplacian with respect to
%the metric $h(0)$ and \eqref{1.5} is meant in the strict sense that the bottom of the spectrum of the operator is \emph{strictly}  positive. Note that this means that $V_0=0$ is allowed for $n \geq 3$ but not $n=2$.
We assume that $n \geq 3$ and that one of the following two conditions hold: either
\begin{equation}\label{1.6} \mathbf{H} := \Delta_g+V~\text{has no zero eigenvalue or zero-resonance},
\end{equation}
or the stronger condition
\begin{equation}\label{1.6strong} \mathbf{H} := \Delta_g+V~\text{has no nonpositive eigenvalues or zero-resonance}.
\end{equation}
By a zero-resonance we mean a nontrivial solution $u$ to $\mathbf{H}u=0$ such that  $u\rightarrow0$ at infinity. Notice that the second assumption, \eqref{1.6strong}, implies that $\mathbf{H}$ is a nonnegative operator, so that we can define $\sqrt{\mathbf{H}}$. These assumptions allow us to use the results of \cite{GHS1}, \cite{GHS2}.
%These assumptions require that $n \geq 3$, as $\mathbf{H}$ always has a zero-resonance for $n=2$.

\subsection{Main results} Now we consider the Schr\"odinger equation
\begin{equation}\label{1.7}
i\partial_tu+\mathbf{H} u=0,  \quad u(0, \cdot)=u_0 \in L^2(M).
\end{equation} The main purpose of this paper is to prove
the following results. Notice that the endpoint estimate ($q = 2$ and $\tilde q = 2$) is included in both cases.

\begin{theorem}[Long-time homogeneous Strichartz estimate]\label{Strichartz} Let $(M^\circ,g)$ be an asymptotically conic non-trapping manifold of dimension
$n\geq3$. Let $\mathbf{H}=\Delta_g+V$ satisfy
\eqref{1.4} and \eqref{1.6strong} and suppose $u$ is the solution
to \eqref{1.7}. Then
\begin{equation}
\|u(t,z)\|_{L^q_tL^r_z(\mathbb{R}\times M^\circ)}\leq
C\|u_0\|_{L^2(M^\circ)},
\label{Str-est}\end{equation}
where the admissible pair $(q,r)\in [2,\infty]^2$ satisfies
\eqref{1.1}.
\end{theorem}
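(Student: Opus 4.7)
The plan is to reduce Theorem~\ref{Strichartz} to a frequency-localized dispersive bound of the form
\begin{equation*}
\bigl\| \varphi(2^{-k}\sqrt{\mathbf{H}})\, e^{it\mathbf{H}}\, \varphi(2^{-k}\sqrt{\mathbf{H}}) \bigr\|_{L^1(M^\circ)\to L^\infty(M^\circ)} \leq C\, |t|^{-n/2},\qquad t\neq 0,\ k\in\ZZ,
\end{equation*}
where $\varphi\in C_c^\infty((1/2,2))$ is a standard dyadic bump. Paired with the trivial $L^2$-conservation bound, the Keel--Tao abstract theorem then delivers the full admissible range of Strichartz estimates, including the endpoint $q=2$, for each dyadic block. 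A square-function decomposition, valid here because spectral multipliers of $\sqrt{\mathbf{H}}$ of Mihlin class are bounded on $L^r$ as a consequence of the heat/spectral kernel bounds of \cite{GHS1,GHS2}, then upgrades the block estimates to the global bound \eqref{Str-est}; at the endpoint $r=2n/(n-2)$ the square function is needed, while at $r=2$ one sums directly by Plancherel.

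To prove the frequency-localized dispersive estimate, I would represent the kernel of the propagator via the spectral theorem as
\begin{equation*}
\int_0^\infty e^{it\lambda^2}\, \varphi(2^{-k}\lambda)^2\, \specl(z,w),
\end{equation*}
and then invoke the fine structure of $\specl$ established in \cite{GHS1,GHS2}. That structure describes $\specl$, after a microlocal partition of unity, as a sum of oscillatory half-densities on suitable blown-up spaces of the form $\lambda^{n-1} a_\pm(\lambda,z,w)\, e^{\pm i\lambda d_g(z,w)}$, together with a pseudodifferential remainder concentrated near the diagonal. Inserting each piece into the $\lambda$-integral and applying stationary phase in $\lambda$, with critical point $\lambda_\ast = d_g(z,w)/(2t)$ in the outgoing sector, produces exactly the gain $|t|^{-n/2}$; regions where the phase has no critical point contribute rapidly decaying error terms through repeated integration by parts.

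The main technical obstacle is securing this dispersive bound \emph{uniformly} in the two limits $|t|\to\infty$ and $k\to-\infty$. Uniformity in $|t|$ rests on the non-trapping hypothesis, which guarantees control of the amplitudes $a_\pm$ and their $\lambda$-derivatives on the compactified resolved space; uniformity in the low-frequency regime $k\to-\infty$ is precisely where assumption \eqref{1.6strong} (absence of zero eigenvalue and zero-resonance) is needed, to ensure the polyhomogeneous behaviour of $\specl$ as $\lambda\to 0$ supplied by the Guillarmou--Hassell resolvent construction. A further subtlety is that the various microlocal components of the GHS partition are naturally defined on different blown-up spaces, so the stationary-phase analysis has to be carried out separately on each with the appropriate half-density factors, and one must finally verify that the Keel--Tao interpolation and the square-function sum tolerate the extreme dyadic scales.
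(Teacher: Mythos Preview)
Your proposal has a genuine gap at its core. The dispersive estimate you want,
\[
\bigl\| \varphi(2^{-k}\sqrt{\mathbf{H}})\, e^{it\mathbf{H}}\, \varphi(2^{-k}\sqrt{\mathbf{H}}) \bigr\|_{L^1\to L^\infty} \leq C\, |t|^{-n/2},
\]
is in general \emph{false} on an asymptotically conic manifold that admits conjugate points, and the geometric hypotheses of the theorem do not exclude conjugate points. Indeed, it follows from \cite{HW1} that the Schwartz kernel of $e^{it\mathbf{H}}$ fails the $|t|^{-n/2}$ bound at any pair of conjugate points $(z,z')$; a dyadic spectral cutoff does not cure this. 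Concretely, near a conjugate pair the spectral measure is not of the form $\lambda^{n-1}a_\pm e^{\pm i\lambda d_g(z,z')}$ but rather an oscillatory integral with extra $v$-variables whose amplitude grows like $\lambda^{n-1+k/2}$ with $k>0$ the drop in rank of the projection from the Legendrian to the base (see \eqref{QiEQj-hi}--\eqref{QiEQj}). Stationary phase in $\lambda$ alone then gives a bound strictly worse than $|t|^{-n/2}$.

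You do mention that the oscillatory description of $\specl$ holds ``after a microlocal partition of unity,'' but you then discard that partition and keep only the dyadic one. That is precisely the step that fails: the microlocal pieces $Q_j(\lambda)\specl Q_j(\lambda)^*$ individually satisfy the good near-diagonal bounds of Proposition~\ref{prop:localized spectral measure}, but summing them back to the full (frequency-localized) spectral measure reintroduces the conjugate-point contributions. The paper's route is to keep the microlocal partition throughout: define $U_j(t)=\int e^{it\lambda^2}Q_j(\lambda)\,\specl$, prove uniform $L^2$-boundedness of each $U_j(t)$ (nontrivial, done via Cotlar--Stein), prove the dispersive bound for $U_j(t)U_j(s)^*$ (possible because this operator is microlocally supported away from conjugate pairs), apply Keel--Tao to each $U_j$ separately, and only then sum the finitely many resulting Strichartz estimates. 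The dyadic-plus-square-function scheme you propose cannot substitute for this phase-space localization; the paper in fact remarks explicitly (end of Section~\ref{sec:L2}) that its argument is designed to avoid Littlewood--Paley altogether.
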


\begin{theorem}[Long-time inhomogeneous Strichartz estimate]\label{Strichartz-inhom}
Let $(M^\circ,g)$ and $\mathbf{H}$ be as in
Theorem~\ref{Strichartz}. Suppose that $u$ solves the inhomogeneous
Schr\"odinger equation with zero initial data
\begin{equation}
i \partial_t u + \mathbf{H} u = F(t, z), \quad u(0, \cdot) = 0. \label{uF}\end{equation}
Then the inhomogeneous Strichartz estimate
\begin{equation}
\|u(t,z)\|_{L^q_tL^r_z(\mathbb{R}\times M^\circ)}\leq C \| F
\|_{L^{\tilde q'}_tL^{\tilde r'}_z(\mathbb{R}\times M^\circ)}
\label{eq:inhom}\end{equation}
holds for admissible pairs $(q,r)$, $(\tilde q, \tilde r)$.
\end{theorem}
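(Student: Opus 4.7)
My plan is to derive Theorem \ref{Strichartz-inhom} from Theorem \ref{Strichartz} by the now-standard duality argument, handling the non-endpoint cases by the Christ–Kiselev lemma and the double-endpoint case $q = \tilde q = 2$, $r = \tilde r = 2n/(n-2)$ separately via a Plancherel/resolvent argument, using the uniform Sobolev estimate of \cite{GH}. I emphasize that Keel–Tao's bilinear endpoint argument is not available here: it would require a global-in-time dispersive estimate $\|e^{it\mathbf{H}}\|_{L^1 \to L^\infty} \lesssim |t|^{-n/2}$, which typically fails on asymptotically conic manifolds because of geometric and spectral obstructions at low frequency.

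\textbf{Non-endpoint case.} By duality, Theorem \ref{Strichartz} is equivalent to the statement that, for every admissible $(\tilde q, \tilde r)$,
\begin{equation*}
\Bigl\| \int_\RR e^{-is \mathbf{H}} F(s)\, ds \Bigr\|_{L^2(M^\circ)} \leq C \|F\|_{L^{\tilde q'}_t L^{\tilde r'}_z(\RR \times M^\circ)}.
\end{equation*}
Composing with $e^{it\mathbf{H}}$ and applying Theorem \ref{Strichartz} once more yields the untruncated bound
\begin{equation*}
\Bigl\| \int_\RR e^{i(t-s) \mathbf{H}} F(s)\, ds \Bigr\|_{L^q_t L^r_z} \leq C \|F\|_{L^{\tilde q'}_t L^{\tilde r'}_z}
\end{equation*}
for every pair of admissible $(q,r)$, $(\tilde q, \tilde r)$. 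The Christ–Kiselev lemma then passes from the untruncated to the retarded integral $\int_0^t e^{i(t-s)\mathbf{H}} F(s)\, ds$ provided $\tilde q' < q$. Since both pairs satisfy $q, \tilde q \geq 2$ with equality forced only at the endpoint, the strict inequality $\tilde q' < q$ fails only when $q = \tilde q = 2$, so Christ–Kiselev covers every case except the doubly-endpoint one.

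\textbf{Double-endpoint case.} Extend $F$ by zero to $t < 0$ so that $u(t) = \int_0^t e^{i(t-s)\mathbf{H}} F(s)\,ds$ equals the retarded Green's function of $i\partial_t + \mathbf{H}$ applied to $F$. Applying the temporal Fourier transform and using Plancherel,
\begin{equation*}
\hat u(\tau, z) = (\mathbf{H} - \tau - i0)^{-1} \hat F(\tau, z), \qquad \|u\|_{L^2_t L^r_z} = \|\hat u\|_{L^2_\tau L^r_z}.
\end{equation*}
Hence the double-endpoint estimate reduces to the uniform resolvent bound
\begin{equation*}
\bigl\|(\mathbf{H} - \tau - i0)^{-1}\bigr\|_{L^{2n/(n+2)}(M^\circ) \to L^{2n/(n-2)}(M^\circ)} \leq C, \quad \tau \in \RR,
\end{equation*}
which is precisely the global uniform Sobolev estimate proved by Guillarmou and the first author in \cite{GH} under hypotheses \eqref{1.4} and \eqref{1.6strong}. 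For $\tau \leq 0$ the hypothesis of no nonpositive eigenvalues and no zero-resonance is used to ensure that the resolvent is bounded uniformly down to $\tau = 0$; for $\tau > 0$ the limiting absorption principle together with the scaling-invariant estimate of \cite{GH} supplies the bound.

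The main obstacle is precisely the double-endpoint case, since global-in-time dispersive estimates are unavailable and the Keel–Tao machinery does not apply directly. The essential input that makes our approach work is the global uniform Sobolev estimate from \cite{GH}, which through the Plancherel identity is equivalent to the $L^2_t L^{2n/(n-2)}_z \leftarrow L^2_t L^{2n/(n+2)}_z$ bound for the inhomogeneous Schr\"odinger propagator and thereby closes the proof.
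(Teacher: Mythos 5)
Your treatment of the non-endpoint case (duality followed by Christ--Kiselev) agrees with the paper's argument. The double-endpoint case, however, contains a fatal gap, and in fact the paper explicitly warns against precisely the argument you propose.

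The central false step is the identity $\|u\|_{L^2_t L^r_z} = \|\hat u\|_{L^2_\tau L^r_z}$. The Plancherel theorem for vector-valued functions gives an isometry $L^2(\RR; X) \to L^2(\RR; X)$ only when $X$ is a Hilbert space; for $X = L^r(M^\circ)$ with $r = 2n/(n-2) > 2$ this fails, because $L^r$ does not have Fourier type $2$ (its Fourier type is only $\min(r, r') < 2$). The paper records exactly this obstacle in the final Remark of Section~\ref{sec:inhom}: after observing that Fourier transformation in $t$ converts \eqref{uF} into the resolvent equation, it states that the boundedness of the Fourier transform from $L^2(\RR_t; L^p(M^\circ))$ to $L^2(\RR_\alpha; L^p(M^\circ))$ ``is known to be false, so this argument is purely heuristic.'' Thus the implication runs only one way: as the paper shows, the endpoint inhomogeneous Strichartz estimate \emph{implies} the uniform Sobolev estimate (by plugging in a specific quasi-monochromatic test function), but it cannot be deduced from it by Plancherel.

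The paper's actual route to the endpoint is different, and does in fact use a Keel--Tao-type bilinear argument, contrary to your remark that it is ``not available.'' The point is that the dispersive bound \eqref{KTdisp} indeed fails for the full propagator at pairs of conjugate points, but the decomposition $e^{it\mathbf{H}} = \sum_j U_j(t)$ of Section~\ref{sec:L2} avoids this for the diagonal products $U_j U_j^*$. For the cross terms $U_j U_k^*$, $j \neq k$, the paper uses the refined partition of Lemma~\ref{poi} (from \cite{GH}) which ensures that each pair $(j,k)$ is either ``near'' (so Proposition~\ref{prop:localized spectral measure} applies) or ``not incoming-/not outgoing-related.'' In the latter cases, Lemmas~\ref{lem:sign-low} and~\ref{lem:sign} give oscillatory-integral representations with a phase of definite sign, and for the favourable time-ordering ($s<t$ or $s>t$) the total phase $(t-s)\lambda^2 \pm \lambda|\Phi|$ is never stationary; integration by parts then yields the one-sided dispersive bounds \eqref{UiUj}, \eqref{UiUj2}. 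Proposition~\ref{either} packages these into one-sided bilinear estimates, and the retarded endpoint estimate \eqref{TFG} follows by subtracting the reverse-time bilinear bound from the global (two-sided) bilinear bound \eqref{R2}, then summing over $j,k$. This is a genuinely time-dependent argument, not a reduction to the stationary uniform Sobolev estimate, even though the two are, as the authors note, closely related.

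Separately, your explanation for the failure of the global $L^1 \to L^\infty$ dispersive estimate (``geometric and spectral obstructions at low frequency'') is not the reason given by the paper; the cited obstruction is the presence of conjugate points, and assumption \eqref{1.6strong} is precisely what eliminates the low-frequency issues.
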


\begin{remark} If we make the weaker assumption \eqref{1.6}, then the statements above still hold, provided that $u_0$ and $F(t, \cdot)$ lie in the positive spectral subspace for $\mathbf{H}$, or in other words that $u_0 = 1_{[0, \infty)}(\mathbf{H})(u_0)$, and similarly for $F(t, \cdot)$ for almost every $t$.
\end{remark}

\subsection{Strategy of the proof}\label{intro:strategy}
Our argument here extends to long time and to the endpoint
the Strichartz estimates  in \cite{HTW} where the first author, Tao and
Wunsch constructed a ``local" parametrix for the propagator
$e^{it\mathbf{H}}$ based on the parametrix from \cite{HW1}. In that paper, Schr\"odinger solutions $e^{it\mathbf{H}} u_0$ were obtained by applying the parametrix to $u_0$ and then correcting
this approximate solution using Duhamel's formula, using local smoothing estimates to control the correction term. This approach works well on a finite time interval, but cannot be expected to work on an infinite time interval as the errors accumulate over time: certainly they cannot be expected to decay to zero as $t \to \infty$, as would be required to prove $L^q$ estimates in time on an infinite interval.

The main new idea in the current paper is to express the propagator $e^{it\mathbf{H}}$ \emph{exactly} using the spectral measure $dE_{\sqrt{\mathbf{H}}}(\lambda)$, exploiting the very precise information on the spectral measure for the Laplacian on  asymptotically conic nontrapping manifolds has recently become available from the works \cite{HV1}, \cite{HW}, \cite{GHS1}.

After expressing the propagator in terms of an integral of the multiplier $e^{it\lambda^2}$ against the spectral measure, our  strategy is to use the abstract
Strichartz estimate proved in Keel-Tao \cite{KT}. Thus, with $U(t)$ denoting
the (abstract) propagator, we need to show uniform $L^2\rightarrow L^2$ estimates for $U(t)$, and $L^1\rightarrow L^\infty$ type dispersive estimate on
the $U(t) U(s)^*$ with an bound of the form
$O(|t-s|^{-n/2})$. In
the flat Euclidean setting, the estimates are obvious because of the
explicit formula for the propagator. But in our general setting
it turns out to be more complicated. It follows from \cite{HW1} that
\emph{the propagator $U(t)(z,z')$ fails to satisfy such a dispersive estimate at any
pair of conjugate points} $(z,z') \in M^\circ \times M^\circ$ (i.e. pairs $(z,z')$ where geodesics  emanating from $z$
focus at $z'$). Our geometric assumptions allow conjugate points, so we need to modify the propagator
such that the failure of the dispersive estimate at conjugate points is avoided.

This is possible due to the $TT^*$ nature of the estimates required by the Keel-Tao formalism. Recall that the dispersive estimate required by Keel-Tao is of the form
\begin{equation}
\big\| U(t)  U(s)^* \big\|_{L^1 \to L^\infty} \leq C |t-s|^{-n/2}.
\label{KTdisp}\end{equation}
If $U(t)$ is the propagator $e^{it\mathbf{H}}$ then the operator on the left hand side is $e^{i(t-s) \mathbf{H}}$. However, nothing in the Keel-Tao formalism requires the $U(t)$ to form a group of operators. Hence we are free to break up $e^{it\mathbf{H}} = \sum_j U_j(t)$ and prove the estimate \eqref{KTdisp} for each $U_j$. Our choice of $U_j(t)$ (sketched directly below) means that  $ U_j(t)U_j(s)^*$ is essentially the kernel $e^{i(t-s)\mathbf{H}}$ \emph{localized sufficiently close to the diagonal that we avoid pairs of conjugate points}, and hence can prove the dispersive estimate.

Our method of decomposing $e^{it\mathbf{H}} = \sum_j U_j(t)$ is motivated by a decomposition used in the
 proof in \cite{GHS2} of a \emph{restriction estimate}
for the spectral measure, that is, an estimate of the form
$$
\big\| dE_{\sqrt{\mathbf{H}}}(\lambda) \big\|_{L^p(M^\circ) \to
L^{p'}(M^\circ)} \leq C \lambda^{n(\frac1{p} - \frac1{p'}) - 1},
\quad 1 \leq p \leq \frac{2(n+1)}{n+3}.
$$

In \cite{GHS2}, it was observed that to prove a restriction
estimate for $dE_{\sqrt{\mathbf{H}}}(\lambda)$, it suffices (via a $T T^*$ argument) to prove
the same estimate for the operators $Q_j(\lambda)
dE_{\sqrt{\mathbf{H}}}(\lambda) Q_j(\lambda)^*$, where
$Q_j(\lambda)$ is a partition of the identity operator in
$L^2(M^\circ)$. The operators $Q_j(\lambda)$ used in \cite{GHS2} are
pseudodifferential operators (of a certain specific type) serving
to localize $dE_{\sqrt{\mathbf{H}}}(\lambda)$ in phase space close
to the diagonal. The authors of \cite{GHS2} showed
that the localized operators $Q_j(\lambda) dE_{\sqrt{\mathbf{H}}}(\lambda) Q_j(\lambda)^*$
satisfy kernel estimates analogous to those satisfied by the
spectral measure for $\sqrt{\Delta}$ on flat Euclidean space:
\begin{equation}
\Big| \big( Q_j(\lambda)   dE_{\sqrt{\mathbf{H}}}^{(l)}(\lambda)
Q_j(\lambda) \big) (z,z') \Big| \leq C \lambda^{n-1-l} \big( 1 +
\lambda d(z,z') \big)^{-(n-1)/2 + l} , \quad l \in \NN,
\label{spec-meas-j-1}\end{equation} where
$dE_{\sqrt{\mathbf{H}}}^{(l)}(\lambda)$ is the $l$th derivative in
$\lambda$ of the spectral measure, and $d$ is the Riemannian
distance on $M^\circ$.

The authors of \cite{GHS2} hoped that \eqref{spec-meas-j-1} could be used as a `black box' in applications of their work.
Unfortunately, \eqref{spec-meas-j-1} seems inadequate for our present purposes. This is because, in order to obtain the dispersive estimate, we need to efficiently exploit the
oscillation of the `spectral multiplier' $e^{it\lambda^2}$, and
particularly the discrepancy between the way this function oscillates relative to the oscillations (in $\lambda$) of the Schwartz kernel of the spectral measure. \emph{The second main innovation of this paper is to improve estimate \eqref{spec-meas-j-1} on the localized spectral measure.}
We show

\begin{proposition}
\label{prop:localized spectral measure} Let $(M^\circ,g)$ and
$\mathbf{H}$ be in Theorem \ref{Strichartz}. Then there exists a $\lambda$-dependent  operator partition of unity on
$L^2(M)$
$$
\mathrm{Id}=\sum_{j=1}^{N}Q_j(\lambda),
$$
with $N$ independent of $\lambda$,
such that for each $1 \leq j \leq N$ we can write
\begin{equation}\label{beanQ}\begin{gathered}
(Q_j(\lambda)dE_{\sqrt{\mathrm{H}}}(\lambda)Q_j^*(\lambda))(z,z')=\lambda^{n-1} \Big(  \sum_{\pm} e^{\pm
i\lambda d(z,z')}a_\pm(\lambda,z,z') +  b(\lambda, z, z') \Big),
\end{gathered}\end{equation}
with estimates
\begin{equation}\label{bean}\begin{gathered}
\big|\partial_\lambda^\alpha a_\pm(\lambda,z,z') \big|\leq C_\alpha
\lambda^{-\alpha}(1+\lambda d(z,z'))^{-\frac{n-1}2},
\end{gathered}\end{equation}
\begin{equation}\label{beans}\begin{gathered}
\big| \partial_\lambda^\alpha b(\lambda,z,z') \big|\leq C_{\alpha, M}
\lambda^{-\alpha}(1+\lambda d(z,z'))^{-K} \text{ for any } K.
\end{gathered}\end{equation}
Here $d(\cdot, \cdot)$ is the Riemannian distance on $M^\circ$.

%\noindent$\bullet$ or the uniform estimate
%\begin{equation}\label{bean0}
%\begin{split}
%\Big|\big(\frac{d}{d\lambda}\big)^{\alpha}\big(Q_j(\lambda)dE_{\sqrt{\mathrm{H}}}(\lambda)Q_j^*(\lambda)\big) (z,z')\Big|\leq
%C_\alpha\lambda^{n-1-\alpha}\quad \forall \alpha\in\mathbb{N}.
%\end{split}
%\end{equation}
\end{proposition}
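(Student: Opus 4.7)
The plan is to combine the Legendrian parametrix for the resolvent boundary values $R(\lambda\pm i0) = (\mathbf{H}-(\lambda\pm i0)^2)^{-1}$ from \cite{HV1, GHS1} with the microlocal partition of unity $\{Q_j(\lambda)\}$ of \cite{GHS2}, then read off the explicit oscillatory form of the kernel on each piece. The new content relative to the bound \eqref{spec-meas-j-1} is to factor out the oscillation $e^{\pm i\lambda d(z,z')}$ explicitly, so that what remains is a \emph{classical symbol} in $\lambda$: each $\partial_\lambda$ gains a clean factor of $\lambda^{-1}$ rather than being paid for by a growing power of $d(z,z')$.

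First I would invoke Stone's formula,
\begin{equation*}
dE_{\sqrt{\mathbf{H}}}(\lambda) = \frac{\lambda}{\pi i}\bigl(R(\lambda+i0) - R(\lambda-i0)\bigr)\,d\lambda,
\end{equation*}
reducing the problem to the microlocal structure of the boundary values of the resolvent. By the results of \cite{HV1, GHS1}, the kernel of $R(\lambda\pm i0)$ lifts to an intersecting-pair Legendre distribution on a suitable blown-up scattering double space, associated to the conormal of the lifted diagonal together with the outgoing (resp.\ incoming) Legendrian $L^+$ (resp.\ $L^-$) obtained as the forward (backward) bicharacteristic flow-out. In the interior of $M^\circ\times M^\circ$ the piece of $L^\pm$ over a region free of conjugate points is parametrized by the phase $\pm\lambda d(z,z')$. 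Taking the difference in Stone's formula cancels the pseudodifferential diagonal contribution, leaving a Legendre distribution supported on $L^+\cup L^-$.

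Next I would apply $Q_j(\lambda)$ and $Q_j^*(\lambda)$. The operators $Q_j(\lambda)$ from \cite{GHS2} are scattering pseudodifferential operators whose symbols are localized to a phase-space region chosen so small that the bicharacteristic transport between the supports of the symbols of $Q_j$ and $Q_j^*$ cannot cross a conjugate point. Using the composition calculus of scattering pseudodifferential operators with Legendre distributions from \cite{GHS1, GHS2}, $Q_j(\lambda)\,dE_{\sqrt{\mathbf{H}}}(\lambda)\,Q_j^*(\lambda)$ is again a Legendre distribution on the same Legendrians, with symbol supported in the restricted region. Because conjugate points have been excluded, $d(z,z')$ is smooth on the base projection of this support and $L^\pm$ is genuinely parametrized by $\pm\lambda d(z,z')$ there. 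From this I read off three contributions on each microlocal piece: (i) $\lambda^{n-1} e^{+i\lambda d(z,z')} a_+$ from $L^+$; (ii) $\lambda^{n-1} e^{-i\lambda d(z,z')} a_-$ from $L^-$; and (iii) a smoothing remainder $\lambda^{n-1} b$ from truncating the parametrix at a high order, which decays like $(1+\lambda d(z,z'))^{-K}$ for any $K$. The $(1+\lambda d(z,z'))^{-(n-1)/2}$ prefactor of $a_\pm$ is the standard half-density/stationary-phase factor attached to the principal symbol of a Legendre distribution on $L^\pm$; for $\lambda d(z,z')\lesssim 1$ it is harmless and the bound reduces to uniform boundedness of the amplitude.

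The main obstacle is upgrading the derivative bound \eqref{spec-meas-j-1} of \cite{GHS2}, which allows a loss of $d(z,z')^l$ per $\lambda$-derivative, to the symbol bound \eqref{bean} in which only $\lambda^{-\alpha}$ is lost. The essential point is that every factor of $d(z,z')$ arising in \cite{GHS2} is produced precisely when $\partial_\lambda$ falls on the oscillatory factor $e^{\pm i\lambda d(z,z')}$; once this phase is extracted by hand, the residual amplitude depends on $\lambda$ only through the polyhomogeneous Legendre symbol, which is a classical symbol of fixed order in $\lambda$. Making this rigorous requires opening up the parametrix construction of \cite{GHS1} and verifying that, after the phase is pulled out, the amplitude is a classical symbol of order zero in $\lambda$ uniformly in $(z,z')$ — a statement that rests on the homogeneity built into the Legendre symbol calculus and on the polyhomogeneous structure of the half-densities near the corners of the blown-up space. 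This bookkeeping, carried out uniformly across the different boundary faces (zero-energy face, scattering face, left/right boundaries), is the most delicate step.
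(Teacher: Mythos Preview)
Your outline correctly identifies the ingredients --- Stone's formula, the Legendrian structure of the resolvent from \cite{HV1,GHS1}, and the microlocal partition from \cite{GHS2} --- and your heuristic for why \eqref{bean} improves \eqref{spec-meas-j-1} (namely, that the bad $d(z,z')^l$ factors come only from differentiating the explicit phase $e^{\pm i\lambda d}$) is the right intuition. However, there is a genuine gap at the diagonal.

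You assert that, because the $Q_j$ exclude conjugate points, $L^\pm$ is parametrized by $\pm\lambda d(z,z')$ on the relevant microlocal support, so one may simply factor out the oscillation and read off a classical amplitude. This is false in a neighbourhood of the diagonal: the Legendrian $L$ intersects $N^*\diagb$ in a codimension-one submanifold there, and neither sheet projects diffeomorphically to the base; correspondingly the would-be phase $\pm d(z,z')$ is singular at $z=z'$. The $Q_j$ do not (and cannot) cut out this region --- every $Q_j$ with $j\ge 2$ has microlocal support meeting $N^*\diagb$. Consequently, near $z=z'$ the localized spectral measure is \emph{not} of the simple form $\lambda^{n-1}e^{\pm i\lambda d}a_\pm$; it is an oscillatory integral with $n-1$ extra variables $v$, with a phase $\Phi(y,w,v)$ satisfying the structural conditions (a)--(d) of \eqref{Phi-properties} (resp.\ \eqref{Psi-properties}). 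Your ``smoothing remainder'' $b$ is not merely a truncated-parametrix error; the genuine $b$-term in \eqref{beanQ} comes from the region $\lambda d(z,z')\lesssim 1$, and proving \eqref{beans} there already requires integration by parts using property (b) of \eqref{Phi-properties}.

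The bulk of the paper's proof (Section~\ref{sec:localized}) is precisely the analysis you defer to ``bookkeeping'': one splits into four regimes according to the relative sizes of $|w|$, $|w_1|$, and $\rho=x/\lambda$, and in the main regime ($|w_1|\gtrsim\rho$, $|w_1|\sim|w|$) one subtracts the phase by setting $\widetilde\Phi=|w_1|^{-1}(\Phi\mp x d)$ and performs a stationary-phase argument in the rescaled variables $\theta_j=d_{v_j}\widetilde\Phi$, with a cutoff at scale $\delta=\omega^{-1/2}$ where $\omega=|w_1|/\rho$. Property (c) (nondegeneracy of $d^2_{vv}\Phi/w_1$) and property (d) ($\Phi=\pm x d$ on the critical set) are exactly what make this work. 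Without this step, you have no mechanism to produce the $(1+\lambda d)^{-(n-1)/2}$ decay together with the clean symbol bounds in $\lambda$ near the diagonal.
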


\begin{remark} 
The estimates \eqref{bean}, \eqref{beans} are easily seen to imply \eqref{spec-meas-j-1} (using Lemma \ref{ledb} to estimate the $\lambda$-derivatives of the operators $Q_i(\lambda)$). However, \eqref{bean}, \eqref{beans} also capture the oscillatory behaviour of the spectral measure, which is crucial in obtaining sharp dispersive estimates in Section~\ref{sec:dispersive}. 
%This is not the sharpest possible estimate for the localized spectral measure.  However, it suffices for our application to Strichartz estimates, so we shall not pursue further refinements.
\end{remark}

We now define  localized (in phase space) propagators
$U_j(t)$ by
\begin{equation}\label{Uiti}
\begin{split}
U_j(t) = \int_0^\infty e^{it\lambda^2} Q_j(\lambda)
dE_{\sqrt{\mathbf{H}}}(\lambda), \quad 1 \leq j \leq N.
\end{split}\end{equation}
Then the operator $U_j(t) U_j(s)^*$
is given, at least formally, by (see Lemma \ref{BB*})
\begin{equation}
U_j(t) U_j(s)^* =  \int e^{i(t-s)\lambda^2}  Q_j(\lambda)
dE_{\sqrt{\mathbf{H}}}(\lambda) Q_j(\lambda)^*.
\label{Uiti2}\end{equation}
However,  there are subtleties involved in spectral integrals such as \eqref{Uiti}, \eqref{Uiti2} containing operator-valued functions. Even to show that
\eqref{Uiti} is well-defined as a bounded operator on $L^2(M^\circ)$ is
nontrivial. \emph{The third main innovation of this paper is to give an effective method for analyzing spectral integrals such as \eqref{Uiti}, \eqref{Uiti2} with operator-valued multipliers.}
We use a dyadic decomposition in $\lambda$ and a
Cotlar-Stein almost orthogonality argument to show the
well-definedness of \eqref{Uiti} and prove a uniform estimate on $\| U_j(t) \|_{L^2 \to L^2}$, as required by the Keel-Tao formalism.

Having made sense of \eqref{Uiti2}, we exploit the oscillations both in the multiplier
$e^{i(t-s)\lambda^2}$ and in the localized spectral measure (as expressed by
\eqref{bean} and \eqref{beans}) to obtain the required dispersive estimate for $U_j(t) U_j(s)^*$. The
homogeneous Strichartz estimate for $e^{it\mathbf{H}}$ then follows by applying Keel-Tao to each $U_j$ and summing over $j$.

%On
%the other hand, defining localized propagators as in
%\eqref{Uiti} means that the uniform $L^2 \to L^2$ estimate $\| U_j(t) \|_{L^2 \to L^2} \leq C$, required by Keel-Tao, is no longer
%obvious. In fact, even to show that \eqref{Uiti} is well-defined as a bounded operator on $L^2(M^\circ)$ is
%nontrivial. The third main innovation of this paper is to give an effective method for analyzing spectral integrals such as \eqref{Uiti}, \eqref{Uiti2} containing operator-valued functions.
%We use a dyadic decomposition in $\lambda$ and a
%Cotlar-Stein almost orthogonality argument to show the
%well-definedness of \eqref{Uiti} and prove the $L^2$ estimates.

 Next we consider the  inhomogeneous Strichartz estimates. As is well-known, the non-endpoint cases of the inhomogeneous estimate follow from the homogeneous estimate and the Christ-Kiselev
lemma. The endpoint inhomogeneous estimate requires an additional argument, and in particular, in this case we require estimates on $U_i(t) U_j(s)^*$ for $i \neq j$. This estimate turns out to be very similar to the uniform Sobolev estimate (on asymptotically conic nontrapping manifolds) of Guillarmou and the first author \cite{GH}.  We use the techniques of that paper, in particular a refined partition of the identity operator. This resemblance to the proof in \cite{GH} is more than formal: as pointed out to us by Thomas Duyckaerts and Colin Guillarmou, the inhomogeneous endpoint Strichartz estimate implies the uniform Sobolev estimate; we sketch this argument in Section~\ref{sec:inhom}. Thus, this part of the paper can be regarded as a time-dependent reformulation of the proof in \cite{GH}, leading to a more general result.

\subsection{Previous literature}Now we review some classical
results about the Strichartz estimates. In the flat Euclidean space,
where $M^\circ=\R^n$ and $g_{jk}=\delta_{jk}$, one can take  $I=\R$;
see Strichartz \cite{Str}, Ginibre and Velo \cite{GV}, Keel and Tao
\cite{KT}, and references therein. The now-classic paper \cite{KT}
by Keel-Tao developed an abstract approach to Strichartz estimates which has
become the standard approach in most subsequent literature,
including this paper. Strichartz estimates for compact metric
perturbations of Euclidean space were proved locally in time by
Staffilani and Tataru \cite{ST}, and subsequently for asymptotically
Euclidean manifolds by Robbiano-Zuily \cite{RZ} and Bouclet-Tzvetkov
\cite{BT1}, and in the asymptotically conic setting by
Hassell-Tao-Wunsch \cite{HTW} and Mizutani \cite{Miz}. In these
works, either the metric is assumed nontrapping, or the theorem
holds outside a compact set. In \cite{BGH} the authors proved that
Strichartz estimates without loss hold on an asymptotically conic
manifold with hyperbolic trapped set. Strichartz estimates have also
been studied on exact cones \cite{Ford} and on asymptotically hyperbolic spaces \cite{Bouclet1}.

Strichartz estimates have also been studied on compact manifolds and
on manifolds with boundary. In the compact case, Strichartz
estimates usually are local in time and with some loss of
derivatives $s$ (i.e.~ the RHS of \eqref{Str-est} has to be replaced
by the $H^s$ norm of $u_0$). Estimates for the standard flat 2-torus
were shown by Bourgain \cite{Bourgain} to hold for any $s>0$.  For
any compact manifold, Burq et al. \cite{BGT} showed that the
estimate holds for $s=\frac1q$ and the loss of derivatives, as well
as the localization in time, is sharp on the sphere. Manifolds with boundary were studied in
\cite{BSS1,BSS2}, \cite{Iva1}, \cite{BFSM}.

Global-in-time Strichartz estimates on asymptotically Euclidean spaces  have been proved in
Bouclet-Tzvetkov \cite{BT2} (but with a low energy cutoff), Metcalfe-Tataru \cite{MT}, Marzuola-Metcalfe-Tataru \cite{MMT} and
Marzuola-Metcalfe-Tataru-Tohaneanu \cite{MMTT}.

As already noted,
Strichartz estimates are an essential tool for studying the behaviour of solutions to nonlinear dispersive equations.  There is a vast literature on this topic, and it is beyond the scope of this introduction to review it, so we refer instead to  Tao's book \cite{Tao}  and the references therein.

\vspace{0.2cm}

\subsection{Organization of this paper}  We review the partition of the
identity and properties of the microlocalize the spectral measure
for low energies in Section~\ref{sec:low} and for high frequency in Section~\ref{sec:high}.
In Section~\ref{sec:localized}, we prove Proposition \ref{prop:localized spectral
measure} based on the properties of the microlocalized spectral
measure. Section~\ref{sec:L2} is devoted to the construction of microlocalized
propagators and the proof of the $L^2$-estimates. The dispersive estimates are proved in Section~\ref{sec:dispersive}.
Finally we prove the homogeneous Strichartz estimates in Section~\ref{sec:hom} and the inhomogeneous Strichartz estimates in Section~\ref{sec:inhom}.

\subsection{Acknowledgements} We thank Colin Guillarmou, Adam Sikora, Jean-Marc Bou\-clet,
Thomas Duyckaerts and Pierre Portal for helpful conversations. This
research was supported by Future Fellowship FT0990895 and Discovery
Grants DP1095448 and DP120102019 from the Australian Research
Council. The second author was supported by Beijing Natural Science
Foundation (1144014) and National Natural Science Foundation of
China (11401024).

%%%%%%%%%%%%%%%%%%%%%%%%%%%%%%%%
%%%%%%%%%%%%%%%%%%%%%%%%%%%%%%%%%
%%%%%%%%%%%%%%%%%%%%%%%%%%%%%%%%

\section{Spectral measure and partition of the identity at low energies}\label{sec:low}
The spectral measure for the operator $\mathbf{H}$ for low energies
was constructed in \cite{GH1}, on the `low energy space' $\MMkb$.
Here we recall the low energy space $\MMkb$ and the associated space
$\MMksc$. The latter space is needed in order to define the class of
pseudodifferential operators in which our operator partition
$Q_j(\lambda)$ from Proposition~\ref{prop:localized spectral measure} lies.

\subsection{Low energy space}
The low energy space $M_{k,b}^2$, defined in \cite{GH1} (based
on unpublished work of Melrose-S\'a Barreto) is a blown-up version of\footnote{In \cite{GH1}, the spectral parameter was denoted $k$ rather than $\lambda$, hence the subscript `$k$' in $M^2_{k,b}$.} $[0,\lambda_0]\times M^2$. This space is illustrated in
Figure 1. More precisely, we define the $3$-codimension corner
$C_3=\{0\}\times\partial M\times\partial M$ and the $2$-codimension
submanifolds
$$C_{2,L}=\{0\}\times\partial M\times M,\quad C_{2,R}=\{0\}\times M\times \partial M,
\quad C_{2,C}=[0,1]\times\partial M\times \partial M.$$ Without loss
of generalities, we assume $\lambda_0=1$. The space $M_{k,b}^2$ is
defined by
$$M_{k,b}^2=\big[[0,1]\times M^2; C_3,
C_{2,R},C_{2,L},C_{2,C}\big]$$ with blow-down map $\beta_b:
M_{k,b}^2\rightarrow [0,1]\times M^2$. Here the notation $[X; Y]$, where $X$ is a manifold with corners and $Y$ a p-submanifold\footnote{We say that $Y$ is a p-submanifold of $X$ if, near every point $p \in Y$, there are local coordinates $x_1, \dots, x_l, y_1, \dots, y_{n-l}$, where $x_i \geq 0$, $y_i \in (-\epsilon, \epsilon)$, $p = (0, \dots, 0)$, such that $Y$ is given locally by the vanishing of some subset of these coordinates.} of $X$, indicates that $Y$ is blown up in $X$ in the real sense; as a set, $[X; Y]$ is the disjoint union of $X \setminus Y$ and the inward-pointing spherical normal bundle of $Y$, $SN^+Y$. Moreover, $[X; Y_1, Y_2, \dots ]$ indicates iterated blowup. See \cite[Section 18]{Mel} for further details.

The new boundary
hypersurfaces created by these blowups are labelled by
$$\mathrm{rb}=\text{clos}\beta_b^{-1}([0,1]\times M\times \partial M), \mathrm{lb}=\text{clos}\beta_b^{-1}([0,1]\times \partial M\times M),
\mathrm{zf}=\text{clos}\beta_b^{-1}(\{0\}\times M\times M),$$ the
`b-face' $\mathrm{bf}=\text{clos}\beta_b^{-1}(C_{2,C}\setminus C_3)$
and
$$\mathrm{bf_0}=\beta_b^{-1}(C_3), ~\mathrm{rb_0}=\text{clos}\beta_b^{-1}(C_{2,R}\setminus C_3),
~\mathrm{lb_0}=\text{clos}\beta_b^{-1}(C_{2,L}\setminus C_3).$$ The
closed lifted diagonal is given by
$\mathrm{diag}_b=\text{clos}\beta_b^{-1}([0,1]\times \{(m,m); m\in
M^\circ\})$, and its intersection with the face $\mathrm{bf}$ is
denoted by $\partial_{\bfc} \mathrm{diag}_b$. We remark that
$\mathrm{zf}$ is canonically diffeomorphic to the b-double space
\begin{equation}
M_b^2=[M^2;\partial M\times\partial M],
\label{M2b}\end{equation}
 as is each section $M^2_{k,b}\cap
\{\lambda=\lambda_*\}$ for fixed $0<\lambda_*<1$.\vspace{0.2cm}

We further define the space $\MMksc$ to be the blowup of $\MMkb$ at $\partial_{\bfc} \diagb$. This space is illustrated in Figure~\ref{fig:mmksc}. The sections $\MMksc \cap \{\lambda=\lambda_*\}$ for fixed $0<\lambda_*<1$ are all canonically diffeomorphic to the scattering double space $\MMsc$, which is
the blowup of $M^2_b$ at the boundary of the lifted diagonal:
$$
\MMsc = [M^2_b; \partial \diagb].
$$
To avoid excessive notation we denote the diagonal in $M^2_b$ and in $M^2_{k,b}$ by the same symbol $\diagb$. 
We similarly define $\diagsc$ to be the closure of the interior of $\diagb$
lifted to $\MMsc$ (or $\MMksc$).\vspace{0.2cm}

\subsection{Coordinates}
Let $(x, y) = (x, y_1, \dots, y_{n-1})$ be local coordinates on $M$ near a boundary point, as discussed in Section~\ref{subsec:geomsetting}.
We define functions $x$ and $y$ on $M_{k,b}^2$ by lifting from the
left factor of $M$ (near $\partial M$), and $x',y'$ by lifting from
the right factor of $M$; similarly $z,z'$ (away from $\partial M$).
Let $\rho=x/\lambda, \rho'=x'/\lambda$, and
$\sigma=\rho/\rho'=x/x'$. Then we can use coordinates
$(y,y',\sigma,\rho',\lambda)$ near $\mathrm{bf}$ and away from
$\mathrm{rb}$; while $(y,y',\sigma^{-1},\rho,\lambda)$ near
$\mathrm{bf}$ and away from $\mathrm{lb}$.
%We also use the notation
%$\rho_\bullet$, where $\bullet=\mathrm{bf_0}, \mathrm{lb_0},$ etc,
%to denote a boundary defining function for the boundary hypersurface
%$\bullet$.

Next we consider local coordinates on the scattering double space $\MMsc$. The only difference between this space and $M^2_b$ is at the boundary of the diagonal.
In local coordinates, near
$\partial_{\bfc} \diagb$, a boundary defining function for $\bfc$ is given by $x/\lambda$, and the diagonal is given by $\sigma = 1, y = y'$. Therefore, coordinates on the interior of the new boundary hypersurface, denoted $\sca$, created by this blowup are
$$
\frac{\lambda (\sigma - 1)}{x}, \ \frac{\lambda(y-y')}{x}, \ \lambda, \ y'.
$$

\begin{figure}[ht!]
\begin{center}
\input{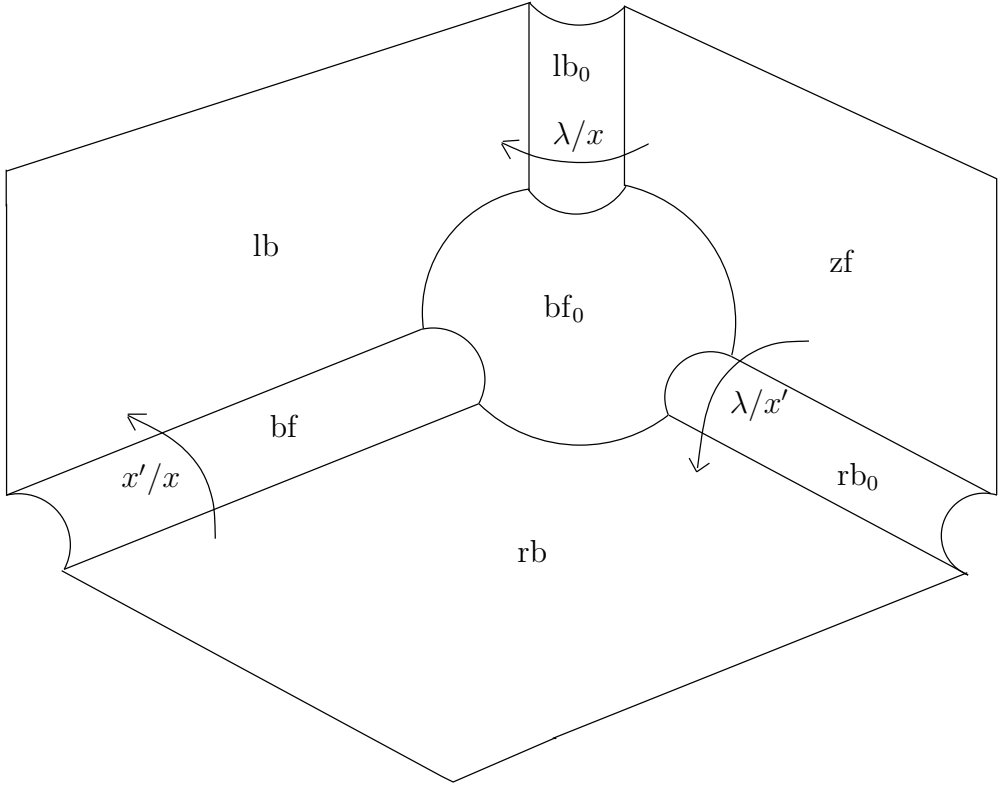}
\caption{The manifold $M^2_{k,b}$. Arrows show the direction in which the indicated function increases from $0$ to $\infty$.}
\label{mmkb}
\end{center}
\end{figure}

\begin{figure}[ht!]
\begin{center}
\input{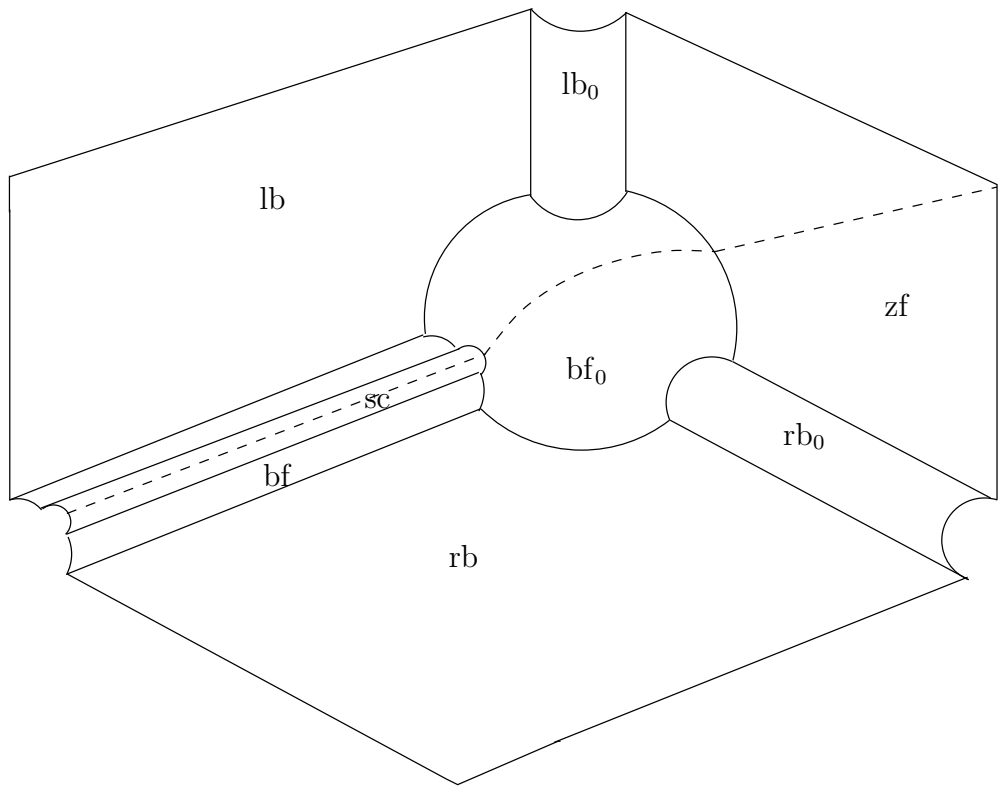}
\caption{The manifold $M^2_{k,\sca}$; the dashed line is the boundary of the lifted diagonal $\Delta_{k, \sca}$}
\label{fig:mmksc}
\end{center}
\end{figure}

We also need to consider coordinates on phase space. As emphasized by Melrose \cite{Mel}, the appropriate phase space for analyzing the Laplacian with respect to a scattering metric is the scattering cotangent bundle. This is the dual space of the scattering tangent bundle ${}^{sc}TM$, which is the bundle whose sections are the smooth vector fields over $M$ which are of uniformly of finite length with respect to $g$. Near the boundary, due to the form of the metric \eqref{1.3}, they are spanned over $C^\infty(M)$ by the vector fields $x^2 \partial_x$ and $x \partial_{y_i}$. Dually, the scattering cotangent bundle is spanned near the boundary by vector fields $dx/x^2 = -d(1/x)$ and $dy_i/x$; away from the boundary, it is canonically diffeomorphic to the usual cotangent bundle. Thus, a point in the scattering cotangent bundle can be expressed as a linear combination of
\begin{equation}
\nu \lambda d\big( \frac1{x} \big) + \sum_{i=1}^{n-1} \lambda \mu_i \frac{dy_i}{x}
\label{numu}\end{equation}
near the boundary, or
\begin{equation}
 \sum_{i=1}^{n} \lambda \zeta_i dz_i
\label{zeta}\end{equation}
away from the boundary, which defines linear coordinates $(\mu, \nu)$ or $\zeta$ on each fibre of the scattering cotangent bundle.
Notice that we have introduced a scaling by the spectral parameter $\lambda$; as $\lambda = 1/h$ this is essentially the semiclassical scaling, appropriate to our operator $\Delta - \lambda^2 = \lambda^2 (h^2 \Delta - 1)$, although in this low energy case, we are looking at the limit $h \to \infty$, rather than $h \to 0$ as in the high energy case in Section~\ref{sec:high}.

The appropriate `compressed cotangent bundle' over $M^2_{k,b}$ is discussed in \cite[Section 2.3]{GHS1}. Here, we only describe this for $\lambda > 0$ plus a neighbourhood of the boundary hypersurface $\bfc$. In this region, it is given by the lift of the bundle ${}^{\sc} T^* M \times {}^{\sc} T^* M$ to $M^2 \times [0, 1]$ and then to $M^2_{k,b}$. In particular, we use coordinates $(\mu, \nu)$ lifted from the left factor of $M$ and $(\mu', \nu')$ lifted from the right factor of $M$ in a neighbourhood of $\bfc$. We remark that these coordinates remain valid in a neighbourhood of $\bfc$ even at $\lambda = 0$, which follows from the fact that \eqref{numu} can be written in the form
$$
\nu d\big( \frac1{\rho} \big) + \sum_{i=1}^{n-1} \mu_i \frac{dy_i}{\rho}.
$$

 \

The following lemma will be useful in our estimates in Section~\ref{sec:localized}.

\begin{lemma}\label{lem:dist} Let $w = (w_1, \dots, w_n)$ denote a set of defining functions for $\diagb \subset \MMkb$; that is, the differentials $dw_i$ are linearly independent, and $\diagb = \{ w = 0 \}$. For example, near $\bfo$ or $\bfc$, we can take $w = (\sigma - 1, y_1 - y'_1, \dots,  y_{n-1} - y'_{n-1})$. Then $|w|/x$ is comparable to $d(z,z')$ in a neighbourhood of $\diagb$. Equivalently, $|w|/\rho$ is comparable to $\lambda d(z,z')$.
\end{lemma}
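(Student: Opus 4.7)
The plan is to verify comparability by a direct computation with the scattering metric, working chart by chart on a neighborhood of $\diagb \subset \MMkb$. The only cases that genuinely use the blowup structure are near $\diagb \cap (\bfc \cup \bfo)$; at interior points of $\zf$ and in the interior of $\MMkb$ the coordinate $x$ is bounded above and below on a neighborhood of the relevant diagonal point, one may take $w = z - z'$, and the comparability $|w|/x \sim |w| \sim d(z,z')$ is the standard statement that, on a compact region of $M^\circ$, Riemannian distance is comparable to coordinate distance.

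For the main case, fix a point of $\diagb$ lying in $\bfc$ (the analysis near $\bfo$ is identical, adding $\lambda$ as a parameter) and work in the coordinates $(x, y, \sigma, y', \lambda)$, in which $\diagb = \{\sigma = 1,\ y = y'\}$, so one may take $w = (\sigma - 1, y - y')$. Passing to the radial variable $r = 1/x$, the metric \eqref{1.3} becomes $dr^2 + r^2 h_{jk}(1/r, y)\, dy^j dy^k$. Introducing geodesic-normal-type coordinates $(r - r_0,\, r_0(y - y_0))$ centered at any base point $(r_0, y_0)$ on $\diagb$ with $r_0$ large, the metric coincides with the Euclidean inner product to leading order, with error estimates uniform in $r_0$. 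Consequently, for $z, z'$ sufficiently close to the diagonal,
\[
d(z,z') \sim \sqrt{(r - r')^2 + \bar r^{\,2}\, |y - y'|^2_h},
\]
where $\bar r$ is any intermediate value between $r$ and $r'$.

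Now $r - r' = 1/x - 1/x' = (1 - \sigma)/x$, and since $\sigma$ stays in a compact subset of $(0,\infty)$ near $\diagb$, one has $\bar r \sim 1/x \sim 1/x'$. Substituting,
\[
d(z,z') \sim \frac{\sqrt{(\sigma - 1)^2 + |y - y'|^2_h}}{x} \sim \frac{|w|}{x},
\]
uniformly in $\lambda \in [0,1]$ and in the base point. The second equivalence $|w|/\rho \sim \lambda d(z,z')$ is then immediate from $\rho = x/\lambda$.

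The only real obstacle is uniformity of the comparability constants across the various boundary hypersurfaces of $\MMkb$ and in $\lambda$. This follows from two observations: first, the computation above does not involve $\lambda$, so the same estimate persists as $\lambda \to 0$ (covering $\bfo$ and $\zf$); second, any two sets of defining functions $w, w'$ for $\diagb$ differ by a smooth invertible matrix of functions near $\diagb$, so $|w| \sim |w'|$, which shows the conclusion is independent of the particular choice of defining functions used in each chart. A partition of unity in a small neighborhood of $\diagb$ then glues the local estimates together.
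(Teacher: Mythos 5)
Your proof is correct and follows essentially the same route as the paper's. The paper also splits into the regions away from $\bfo\cup\bfc$ (where $|w|^2$ and $d(z,z')^2$ are both quadratic defining functions for $\diagb$, hence comparable) and near $\bfo\cup\bfc$, where it computes $|w|/x \sim |r-r'| + r|y-y'|$ with $r=1/x$ and then cites the form $dr^2 + r^2h$ of the metric to conclude this is comparable to $d(z,z')$. You carry out the same reduction to $|r-r'| + \bar r|y-y'|$ (equivalently its $\ell^2$ version), and then fill in the "comparable to $d$" step explicitly via uniformly Euclidean normal coordinates $(r-r_0,\,r_0(y-y_0))$, together with the routine observations that the constants are $\lambda$-independent, that changing the choice of defining functions $w$ changes $|w|$ by a bounded factor, and that local estimates glue. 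Those are precisely the details the paper leaves implicit, so the two arguments are in substance identical; yours just makes the uniformity-in-$r_0$ and uniformity-in-$\lambda$ explicit.

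Two small points worth tightening if you were to write this up. First, in the normal-coordinate step you should say that after a further linear change in the $u_j$ one reduces $h_{jk}(1/r_0,y_0)$ to $\delta_{jk}$; since $h$ is a smooth positive-definite family on the compact manifold $Y\times[0,\epsilon]$, this linear change and the resulting error bounds are uniform, which is what your phrase "uniform in $r_0$" needs. Second, the notation $|y-y'|_h$ should be interpreted consistently with $|w|$ — locally these differ only by a bounded factor, so it does not affect the conclusion, but it is worth a sentence to say that $|y-y'|$ is a coordinate norm in a chart on $Y$ and is comparable to $d_{\partial M}(y,y')$ there.
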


\begin{proof} Away from $\bfo \cup \bfc$, $|w|^2$ is a quadratic defining function for $\diagb$, and so is $d(z,z')^2$, hence they are comparable. Now consider what happens near $\bfo$ or $\bfc$. Using coordinates $w = (\sigma - 1, y_1 - y'_1, \dots y_{n-1} - y'_{n-1})$, we have
$$
\frac{|w|}{x} \sim \Big| \frac{\sigma - 1}{x} \Big| + \Big| \frac{y-y'}{x} \Big|.
$$
Write $r = 1/x$; then this is
$$
| r - r' | + r |y-y'|.
$$
Given that the metric takes the form $dr^2 + r^2 h(x, y, dy)$, where $h$ is positive definite, we see that this is comparable to $d(z,z')$.
\end{proof}

\begin{remark} In the case $M^\circ = \RR^n$, with Euclidean coordinates $z = (z_1, \dots, z_n)$, we can take $w = (z_1 - z_1', \dots, z_n - z_n')$.
\end{remark}

\subsection{Pseudodifferential operators on the low energy space}
We use the class of pseudodifferential operators $\Psi^m_k(M; \Omegakbh)$ on $\MMksc$ introduced by Guillarmou and the first author. By definition, these operators have Schwartz kernels which are half-densities conormal to the diagonal $\diagsc$, smooth on $\MMksc$ away from the diagonal, and rapidly decreasing at all boundary hypersurfaces not meeting the diagonal, i.e. at $\lbo$, $\rbo$, $\lb$ and $\rb$. In addition, we will only consider those operators with kernels supported where $\rho, \rho' \leq C < \infty$. In this setting we can write the kernel in the form
\begin{equation}
\lambda^n \int e^{i\lambda/x\big( (1-\sigma) \nu + (y-y')\cdot \mu \big)} a(\lambda, \rho, y, \mu, \nu) \, d\mu \, d\nu \Big| \frac{dg dg' d\lambda}{\lambda} \Big|^{1/2}
\label{psi-le}\end{equation}
where $a$ is a classical symbol of order $m$ in the $(\mu, \nu)$ variables, smooth in $(\lambda, \rho, y)$ and supported where $\rho \leq c$.  We remark that the $|d\lambda/\lambda|^{1/2}$ factor is purely formal; if we write this in the form $A(z, z', \lambda) |dg dg' d\lambda/\lambda|^{1/2}$, then the action on a half-density $f |dg|^{1/2}$ is given by
$$
\Big( \int A(z, z', \lambda) f(z') dg(z')  \Big) |dg(z)|^{1/2}.
$$

From this representation it is easy to see the following

\begin{lemma}\label{ledb}
If $A \in \Psi^{m}_k(M; \Omegakbh)$ then $(\lambda \partial_\lambda)^N A$ is also a pseudodifferential operator of order $m$, i.e. $(\lambda \partial_\lambda)^N A \in \Psi^{m}_k(M; \Omegakbh)$.
\end{lemma}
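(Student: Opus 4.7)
My plan is to work directly with the oscillatory integral representation \eqref{psi-le} of the kernel of $A$, and to analyze how $\lambda\partial_\lambda$, understood as differentiation at fixed physical variables $(x,x',y,y')$, acts on it. The overall strategy is to reduce the action of $\lambda\partial_\lambda$ to the action of a b-vector field on $\MMksc$ that produces another oscillatory integral of the same form.

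First I switch coordinates on the base $[0,1]_\lambda\times M^2$ (near $\bfc$) from $(\lambda,x,x',y,y')$ to $(\lambda,\rho,\sigma,y,y')$, with $\rho=x/\lambda$ and $\sigma=x/x'$. The crucial algebraic observation is that the phase
$$
\phi \;=\; \tfrac{1}{\rho}\bigl((1-\sigma)\nu + (y-y')\cdot\mu\bigr)
$$
appearing in \eqref{psi-le} is \emph{independent of $\lambda$} in these coordinates, because $\lambda/x=1/\rho$. Since $\sigma,y,y'$ are also $\lambda$-independent when $x,x'$ are fixed, the chain rule gives
$$
\lambda\partial_\lambda\bigm|_{x,x',y,y'} \;=\; \lambda\partial_\lambda \;-\; \rho\,\partial_\rho,
$$
which manifestly lifts to a smooth b-vector field on $\MMksc$ (tangent to every boundary hypersurface).

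Second, I differentiate the kernel. Writing $A(\lambda,z,z')=\lambda^n\int e^{i\phi}a\,d\mu\,d\nu$ and suppressing the half-density factor, the $\lambda^n$ produces only a numerical multiple, the symbol contributes $(\lambda\partial_\lambda-\rho\partial_\rho)a$, which is again a classical symbol of order $m$ in $(\mu,\nu)$ with the same support in $\rho$, and the phase contributes
$$
\lambda\partial_\lambda\phi\bigm|_{x,x',y,y'} \;=\; -\rho\,\partial_\rho\phi \;=\; \phi,
$$
giving an extra factor of $i\phi$ under the integral. This naively raises the symbol order by one. I remove it by integration by parts in $(\mu,\nu)$: since $\phi$ is homogeneous of degree one in $(\mu,\nu)$, Euler's identity yields $i\phi\, e^{i\phi}=(\mu\cdot\partial_\mu+\nu\partial_\nu)e^{i\phi}$, so
$$
\int i\phi\, e^{i\phi}\, a\,d\mu\,d\nu \;=\; -\int e^{i\phi}\bigl(\mu\cdot\partial_\mu+\nu\partial_\nu+n\bigr)a\,d\mu\,d\nu,
$$
again a classical symbol of order $m$. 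Combining the three contributions exhibits $\lambda\partial_\lambda A$ in the form \eqref{psi-le} with a new symbol $\tilde a$ of order $m$ supported in the same $\rho$-region; the half-density factor $|dg\,dg'\,d\lambda/\lambda|^{1/2}$ is preserved because $\lambda\partial_\lambda$ acts as a smooth b-vector field. Iterating $N$ times and noting that composition of such vector fields with the above operations preserves the symbol class yields the lemma.

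The step I expect to require the most care is the integration by parts: $a$ is a classical symbol rather than a compactly supported function in $(\mu,\nu)$, so the manipulation must be justified via the standard regularization of oscillatory integrals (insert a cutoff $\chi(\varepsilon(\mu,\nu))$, integrate by parts, and let $\varepsilon\to 0$), and one should also check that the argument goes through in the interior patch away from $\bfc$, where the phase takes the form $\lambda(z-z')\cdot\zeta$ and an entirely analogous (in fact simpler) computation applies.
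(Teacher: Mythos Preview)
Your proposal is correct and follows essentially the same approach as the paper's proof: both hinge on the observation that the phase is homogeneous of degree one in $\lambda$ and in $(\mu,\nu)$, so that $\lambda\partial_\lambda$ acting on the exponential can be traded for $(\mu\cdot\partial_\mu+\nu\partial_\nu)$ and then removed by integration by parts, leaving a symbol of the same order. Your treatment is more explicit than the paper's (you spell out the coordinate change $\lambda\partial_\lambda|_{x,x',y,y'}=\lambda\partial_\lambda-\rho\partial_\rho$, track the $\lambda^n$ prefactor, and flag the oscillatory-integral regularization), but the underlying argument is the same.
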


\begin{proof}
It suffices to prove for $N=1$ and use induction. If $\lambda \partial_\lambda$ hits the function $a$ in \eqref{psi-le}, then $a$ is still a symbol of order $m$ in the $(\mu, \nu)$ variables, smooth in $(\lambda, \rho, y)$ and supported where $\rho \leq c$. (Notice that $\rho = x/\lambda$ depends on $\lambda$ as well.) On the other hand, if $\lambda \partial_\lambda$ hits the phase, this is the same as $\nu \partial_\nu + \mu \cdot \partial_\mu$ hitting the phase, as it is homogeneous of degree $1$ in both $\lambda$ and in $(\nu, \mu)$. Integrating by parts we obtain another symbol $\tilde a$ of order $m$. This completes the proof.
\end{proof}

\begin{lemma}\label{lekb}
If $A \in \Psi^{m}_k(M; \Omegakbh)$, and if $m < -n$, then $A$ satisfies a kernel bound
$$
\Big| A(z, z') \Big| \leq \lambda^n \big( 1 + \lambda d(z,z') \big)^{-N}
$$
for any $N \in \NN$.
\end{lemma}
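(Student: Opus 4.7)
The plan is to split into cases based on the size of $\lambda d(z,z')$, combining a trivial size estimate with a non-stationary phase argument on the oscillatory integral representation \eqref{psi-le}. The key observation is that, writing $w = (\sigma - 1, y - y')$, the gradient of the phase $\Phi = (\lambda/x)\bigl((1 - \sigma)\nu + (y - y') \cdot \mu\bigr)$ in the fibre variables is $\nabla_{(\mu,\nu)} \Phi = -(\lambda/x)(y - y', 1 - \sigma)$, so
\[
|\nabla_{(\mu,\nu)} \Phi| = \frac{\lambda |w|}{x} = \frac{|w|}{\rho},
\]
which by Lemma~\ref{lem:dist} is comparable to $\lambda d(z,z')$.

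First I would treat the regime $\lambda d(z,z') \leq 1$ by directly estimating the absolute value of the integrand in \eqref{psi-le}. Since $a$ is a classical symbol of order $m < -n$, it is pointwise integrable in $(\mu, \nu)$ uniformly in the remaining (compactly supported) variables, giving the trivial size bound $|A(z,z')| \leq C\lambda^n$. This suffices in this regime because $1 + \lambda d(z,z') \sim 1$.

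For the regime $\lambda d(z,z') \geq 1$ I would use integration by parts. Define
\[
L = \frac{i\, x\, (y - y',\, 1 - \sigma)}{\lambda |w|^2} \cdot \nabla_{(\mu, \nu)},
\]
so that $L e^{i\Phi} = e^{i\Phi}$. Since $\nabla_{(\mu, \nu)} \Phi$ is independent of $(\mu, \nu)$, the transpose $L^t$ acts only by differentiating the symbol $a$; each application reduces the symbol order by $1$ and produces a scalar factor of magnitude $|\nabla_{(\mu,\nu)}\Phi|^{-1} \sim (\lambda d(z,z'))^{-1}$. Iterating $N$ times leaves an integrable symbol of order $m - N < -n$ and gives
\[
|A(z,z')| \leq C_N \lambda^n (\lambda d(z,z'))^{-N} \int (1 + |\mu| + |\nu|)^{m - N} \, d\mu\, d\nu \leq C_N' \lambda^n (\lambda d(z,z'))^{-N}.
\]
Merging the two regimes yields the claimed bound $|A(z,z')| \leq C_N \lambda^n (1 + \lambda d(z,z'))^{-N}$.

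The one subtlety is that \eqref{psi-le} is explicitly written only in a neighbourhood of $\bfc$; in the interior of $M$, where $x$ is bounded away from $0$, an analogous oscillatory representation holds in ordinary (semiclassical) cotangent coordinates with phase $\lambda(z-z') \cdot \zeta$, and the same non-stationary phase argument applies verbatim using $|\nabla_\zeta (\lambda(z-z')\cdot\zeta)| = \lambda|z-z'| \sim \lambda d(z,z')$. Away from $\diagsc$, the kernel is smooth on $\MMksc$ and rapidly vanishing at $\lbo, \rbo, \lb, \rb$ by definition of the operator class, so those regions contribute rapidly decaying terms that are easily absorbed into the bound. I do not anticipate any serious obstacle; the main technical point is simply the identification, via Lemma~\ref{lem:dist}, of $|\nabla_{(\mu,\nu)}\Phi| = |w|/\rho$ with a function comparable to $\lambda d(z,z')$.
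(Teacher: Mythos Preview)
Your proof is correct and follows essentially the same strategy as the paper: integration by parts in the fibre variables $(\mu,\nu)$, exploiting that $|\nabla_{(\mu,\nu)}\Phi| = |w|/\rho$ is comparable to $\lambda d(z,z')$ via Lemma~\ref{lem:dist}. The only technical difference is in the choice of integration-by-parts operator: the paper uses the second-order operator
\[
\frac{1 - \partial_\nu^2 - \sum_i \partial_{\mu_i}^2}{1 + \lambda^2\bigl(x^{-2}(\sigma-1)^2 + x^{-2}|y-y'|^2\bigr)},
\]
which leaves $e^{i\Phi}$ invariant and produces the factor $(1+\lambda d(z,z'))^{-2N}$ in one stroke without any case split, whereas your first-order operator $L$ degenerates at the diagonal and forces you to handle $\lambda d(z,z') \leq 1$ separately by a trivial size bound. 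Both routes are standard and equivalent in strength; the paper's is slightly more streamlined.
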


\begin{proof}
If the order $m$ is less than $-n$, then the integral \eqref{psi-le} is absolutely convergent,
showing  that the kernel of $\lambda^{-n} A$ is uniformly bounded. Next, we note that the differential operator
$$
\frac{1 - \partial_\nu^2 - \sum_i \partial_{\mu_i}^2}{ 1 + \lambda^2
\big( x^{-2} (\sigma - 1)^2  + x^{-2} |y-y'|^2 \big)}
$$
leaves the exponential in \eqref{psi-le} invariant. By applying this $N$ times to the exponential and then integrating by parts, we see that the integral is bounded by
$$
C_N  \Big( 1 + \lambda^2  \big( x^{-2} (\sigma - 1)^2  + x^{-2}
|y-y'|^2\big) \Big)^{-N}
$$
for any $N$. Finally, as in the proof of Lemma~\ref{lem:dist},  the square of the Riemannian distance on $M$ is comparable to
$$
\frac{(\sigma - 1)^2}{x^2} + \frac{|y-y'|^2}{x^2},
$$
so the integral is bounded by $C_N (1 + \lambda d(z,z'))^{-N}$ for
any $N$.
\end{proof}

\begin{corollary}\label{cor:bdd-le}
If $A \in \Psi^{m}_k(M; \Omegakbh)$, and if $m < -n$, then $A$ is bounded $L^2(M^\circ) \to L^2(M^\circ)$ uniformly as $\lambda \to 0$. The same is true for $(\lambda \partial_\lambda)^N A$ for any $N$.
\end{corollary}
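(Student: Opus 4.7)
The plan is to apply Schur's test, using the pointwise kernel estimate from Lemma \ref{lekb}. Since $m < -n$, that lemma gives
$$
|A(z,z')| \leq C_N \lambda^n (1 + \lambda d(z,z'))^{-N}
$$
for any $N \in \NN$, so the task reduces to controlling $\sup_z \int_M \lambda^n (1+\lambda d(z,z'))^{-N} \, dg(z')$ uniformly in $\lambda$, together with the symmetric integral obtained by swapping $z$ and $z'$ (which is handled identically, since both arguments play the same role in the bound).

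To carry out the integral estimate, I would use the uniform polynomial volume growth $\mathrm{vol}(B(z,R)) \leq C R^n$ that holds on any asymptotically conic manifold. This is immediate from the asymptotic form of the volume element $\sqrt{h}\, r^{n-1}\, dr\, dy$ near infinity (with the compact part contributing a trivially bounded piece). Decomposing $M$ into dyadic shells $S_k = \{ z' : 2^{k-1}/\lambda \leq d(z,z') < 2^k/\lambda \}$ for $k \geq 0$ together with the core ball $\{d(z,z') \leq 1/\lambda\}$, each piece has volume at most $C(2^k/\lambda)^n$, and on it the integrand is at most $2^{-(k-1)N}$. Summing the resulting geometric series for $N > n$ gives $\int_M (1+\lambda d(z,z'))^{-N} dg(z') \leq C\lambda^{-n}$, which exactly cancels the $\lambda^n$ prefactor in the kernel bound. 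Schur's test then yields $\|A\|_{L^2 \to L^2} \leq C$ independent of $\lambda$.

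For the statement about $(\lambda\partial_\lambda)^N A$, I would simply observe that, by Lemma \ref{ledb} applied inductively, $(\lambda\partial_\lambda)^N A$ is again an element of $\Psi^m_k(M; \Omegakbh)$ of the same symbol order $m < -n$. Hence Lemma \ref{lekb} and the Schur argument above apply verbatim to this operator, giving the same uniform bound. There is no serious obstacle in this proof; the one point that strictly requires invoking the geometric hypothesis is the uniform polynomial volume bound, which is standard for asymptotically conic manifolds and needs no separate argument here.
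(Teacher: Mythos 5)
Your proof is correct and follows essentially the same route as the paper: the kernel bound of Lemma~\ref{lekb}, the uniform two-sided volume estimate $c r^n \leq V(z,r) \leq C r^n$, and Schur's test, with Lemma~\ref{ledb} handling the $(\lambda\partial_\lambda)^N A$ case. You merely flesh out the Schur computation with the dyadic shell decomposition, which the paper leaves implicit.
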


\begin{proof} This follows from the kernel bound in Lemma~\ref{lekb}, the volume estimate $c r^n \leq V(z, r) \leq C r^n$
for the volume $V(z, r)$ of the ball of radius $r$ centered at $z
\in M^\circ$, and Schur's test.
\end{proof}

\subsection{Low energy partition of the identity}\label{subsec:lepoi} Recall that, in Proposition~\ref{prop:localized spectral measure}, we employ a partition of the identity.
We use essentially the same partition of the identity as in
\cite{GHS2}. To define it, we specify the symbols of these
operators, which must form a partition of unity on the phase space.
We point out that, in our approach, it is crucial to be able to
localize in phase space (and hence necessary to use
pseudodifferential operators) in order to eliminate difficulties
with conjugate points.

For low energies, this partition is defined as follows. We choose a
function $\chi \in C^\infty(\RR)$ of a real variable, with $\chi(t)
= 0$ for $t \leq \epsilon$ and $\chi(t) = 1$ for $t \geq 2\epsilon$.
We define $Q^{\mathrm{low}}_0(\lambda)$ to be multiplication by the
function $1 - \chi(\rho)$ (recall $\rho = x/\lambda$). Next, we
choose $Q_1'(\lambda)$ such that its (full) symbol  is equal to $0$
for $1/2\leq |\mu|_h^2+\nu^2\leq3/2$, and equal to $1$ outside
$1/4\leq |\mu|_h^2+\nu^2\leq2$. Then we define
$Q^{\mathrm{low}}_1=\chi(\rho)Q_1'$. This means that the symbol of
$\Id - Q^{\mathrm{low}}_0 - Q^{\mathrm{low}}_1$ is supported where
$\rho$ is small and close to the characteristic variety
$|\mu|_h^2+\nu^2 = 1$. We then decompose this as $Q^{\mathrm{low}}_2
+ \dots +Q^{\mathrm{low}}_{N_l}$ such that the symbol of each
$Q^{\mathrm{low}}_j$, $j \geq 2$ has support where $\nu$ is
contained in a small interval.

\subsection{Localized spectral measure}\label{subsec:loc-spec-meas-le}
The main result of \cite{GHS1} was that the spectral measure for the
Laplacian on an asymptotically conic manifold is, for low energies,
a Legendre distribution associated to a pair of Legendre
submanifolds, the `propagating Legendrian' $L^{\bfc}$ and the
`incoming/outgoing Legendrian' $L^\sharp$.
We now explain very briefly what this means. We first have to introduce the contact manifold in which these Legendre submanifolds live.  Consider the bundle
${}^\Phi T^* M^2_b$, obtained by lifting ${}^{\sca} T^* M \times {}^{\sca} T^* M$
(viewed as a bundle over $M^2$) to $M^2_b$. This bundle carries a symplectic structure, but the symplectic form degenerates at the boundary. Nevertheless, it determines a contact structure on this bundle restricted to the boundary hypersurface $\bfc$\footnote{We denote the new boundary hypersurface of $M^2_b$, created by the blowup \eqref{M2b}, by $\bfc$. This is slightly at variance with the way $\bfc$ is used as a boundary hypersurface of $M^2_{k,b}$ --- here it really corresponds to taking a section of $M^2_{k,b}$ at fixed $\lambda_* > 0$ --- but hopefully no confusion will be caused.}, which we denote ${}^\Phi T^*_{\bfc} M^2_b$.  We give this contact structure in local coordinates $(y, y', \sigma, \mu, \mu', \nu, \nu')$ for ${}^\Phi T^*_{\bfc} M^2_b$, where $\sigma = x/x'$, $(\mu, \nu)$ are as in \eqref{numu}, and as above, the unprimed/primed coordinates are lifted from the left/right copies of ${}^{\sca} T^* M$. In these coordinates,  the contact form has an expression
$$
d\nu - \mu \cdot dy + \sigma (d\nu' - \mu' \cdot dy').
$$
A Legendrian submanifold is, by definition, an $2n-1$-dimensional submanifold of this $4n-1$-dimensional space on which the contact form vanishes. The Legendre submanifold $L^\sharp$ is easy to define: it is the submanifold
$$
\{ (y, y', \sigma, \mu, \mu', \nu, \nu') \mid \mu = \mu' = 0, \ \nu = \nu' = 1 \}.
$$

The other Legendre submanifold, $L^{\bfc}$, is more interesting.
It encodes the geodesic flow on the cone over $(\partial M, h)$ where $h = h(0)$ is the metric in \eqref{1.3}. Let  $(y, \eta)$ be an element of the cosphere bundle $S^* \partial M$ of $T^* \partial M$ and $\gamma(s) = (y(s), \eta(s))$ be the geodesic with $(y(0), \eta(0)) = (y, \eta)$. Then  $L^{\bfc}$ is given by the union of the leaves $\gamma^2 = \gamma^2(y, \eta)$,
\begin{multline}
\gamma^2 = \textrm{clos } \big\{ (y, y', \sigma = x/x', \mu, \mu', \nu, \nu') \mid y = y(s), y' = y(s'), \mu = \eta(s) \sin s, \\ \mu' = -\eta(s') \sin s', \nu = -\cos s, \nu' = \cos s', \sigma = \sin s/\sin s', (s, s') \in (0, \pi)^2 \big\}
\label{gamma^2}\end{multline}
as $(y, \eta)$ ranges over $S^* \partial M$. We note that this closure includes the sets
\begin{equation}
T_\pm = \big\{ (y, y', \sigma, \mu, \mu', \nu, \nu') \mid y=y', \ \sigma \in \RR, \  \mu = \mu' = 0, \ \nu = -\nu' = \pm 1 \},
\label{Tpm}\end{equation}
corresponding to the limit $s, s' \to 0$ and $s, s' \to \pi$.

The statement that the spectral measure is a Legendre distribution with respect to the pair of Legendre submanifolds $(L^{\bfc}, L^\sharp)$ means that the Schwartz kernel of the spectral measure can be expressed as an oscillatory function or oscillatory integral, with a phase function that `parametrizes' the Legendre submanifold. We now state what `parametrizes' means, first in the case of a Legendre submanifold $L$ that projects diffeomorphically to the base $\bfc$, in the sense that the projection from ${}^\Phi T^* _{\bfc}M^2_b$ to $\bfc$ restricts to a (local) diffeomorphism from $L$ to $\bfc$. In this case, there exists a function $\Phi : \bfc \to \RR$ such that (locally) $L$ is the graph of the differential of the function $\Phi/x$, or in coordinates,
\begin{multline*}
L = \{ \mu = d_y \Phi(y, y', \sigma), \ \mu' = \sigma^{-1} d_{y'} \Phi(y, y', \sigma), \\ \nu = \Phi(y, y', \sigma) - \sigma d_\sigma \Phi(y, y', \sigma), \ \nu' = d_\sigma \Phi(y, y', \sigma) \}.
\end{multline*}
We say that $\Phi$, or more accurately $\Phi/x$,  (locally) \emph{parametrizes} $L$.
In the general case, there always exist (nonunique) functions $\Phi(y, y', \sigma, v)$, depending on extra variables $(v_1, \dots, v_k)$,
that locally parametrize $L$ in the sense that
\begin{multline}
L = \{ \mu = d_y \Phi(y, y', \sigma,v), \ \mu' = \sigma^{-1} d_{y'} \Phi(y, y', \sigma,v), \\ \nu = \Phi(y, y', \sigma,v) - \sigma d_\sigma \Phi(y, y', \sigma,v), \ \nu' =  d_\sigma \Phi(y, y', \sigma,v) \mid d_v \Phi = 0 \}.
\label{LPhiparam}\end{multline}

Observe that if we take the union of the points of \eqref{gamma^2} with $s=s'$, over all $(y, \eta)\in S^* \partial M$, then we get a codimension one submanifold of $L^{\bfc}$, which is also a codimension one submanifold of the conormal bundle of the diagonal $N^* \diagb$, given by
$$
N^* \diagb = \{ (y, y', \sigma, \mu, \mu', \nu, \nu') \mid y=y', \ \sigma = 1, \ \mu = -\mu', \ \nu = -\nu' \}.
$$
\emph{It turns out that
in a deleted neighbourhood of $N^* \diagb$, $L^{\bfc}$ projects
in a 2:1 fashion to the base $\bfc$, i.e. $L^{\bfc} \setminus N^* \diagb$ consists of 2 sheets, each of which project diffeomorphically to the base $\bfc$, and which are parametrized by the function $\pm d_{\conic}$, where $d_{\conic}$ is  the distance function  on the cone over $\partial M$.}
 The conic metric $d_{\conic}$ has an explicit expression when
$d_{\partial M}(y,y') < \pi$.  Writing $r = 1/x$, $r' = 1/x' =
\sigma /x$, it takes the form
\begin{equation}
d_{\conic}(y,y', r, r') = \sqrt{ r^2 + {r'}^2 - 2r r' \cos
d_{\partial M}(y, y')} = r \sqrt{1 + \sigma^2 -2\sigma \cos
d_{\partial M}(y, y')}. \label{dconic}\end{equation}
Notice that $d_{\conic}(y,y', r, r')/r$ indeed has the form $\Phi(y, y', \sigma)/x$, and is smooth provided that $\cos d_{\partial M}(y, y')$ is smooth, i.e. $d_{\partial M}(y, y')$ is less than the injectivity radius on $(\partial M, h)$.

We next explain why we consider the localized (or more precisely microlocalized) spectral measure, by which we mean any of the operators $Q(\lambda) \specl Q(\lambda)^*$ where $Q(\lambda)$ is a member of our partition of the identity. The reason is, as shown in \cite[Section 5]{GHS2}, these terms are also Legendre distributions, but associated only to part of the Legendrian, namely to the subset
$$
\{ (y, y', \sigma, \mu, \mu', \nu, \nu') \in L \mid (y, \mu, \nu) , (y', \mu', \nu') \in WF'(Q),
\}
$$
where $WF'(Q)$ is the support of the symbol\footnote{the relevant symbol here is the scattering symbol, or boundary symbol, in the scattering calculus, which is a function on ${}^\Phi T^*_{\bfc} M^2_b$; see \cite{Mel}.} of $Q$. This is localized close to $N^* \diagb \cup T_\pm$ (that is, those points in \eqref{gamma^2} corresponding to $s=s'$), if $WF'(Q)$ is well localized. We can then use the italicized statement above to write this piece of the spectral measure using the conic distance function, except near $N^* \diagb$ itself, where we can express it as an oscillatory integral using a slightly more complicated form of phase function (as in part (ii) of Proposition~\ref{prop:osc-form-low}).

We summarize the information we need from \cite{GHS1}, \cite{GHS2} concerning the spectral measure:

%We discuss some consequences of this
%result for the localized spectral measure, by which we mean the
%operator $Q(\lambda) \specl Q(\lambda)^*$ where $Q(\lambda)$ is a
%member of our partition of the identity.

\begin{proposition}\label{prop:osc-form-low}
Let $Q^{\mathrm{low}}_j(\lambda)$ be a member of the partition of the identity
defined above. Let $\eta > 0$ be given. Then for $j,k=0$ or $1$,  $Q^{\mathrm{low}}_j(\lambda) \specl Q^{\mathrm{low}}_k(\lambda)^*$ satisfies the estimates on the RHS of
\eqref{beans}; and $Q^{\mathrm{low}}_j(\lambda) \specl
Q^{\mathrm{low}}_j(\lambda)^*, j\geq2$ can be written as a finite sum of terms of
the following two types:

(i) An oscillatory function of the form
\begin{equation}
 \lambda^{n-1} e^{\pm i\lambda d_{\conic}(y, y', \frac1{x}, \frac{\sigma}{ x})} a(y, y', \sigma, x, \lambda)
\label{a}\end{equation}
where $a$ is supported where $x, x' \leq \eta$ and $d_{\partial M}(y, y') \leq \eta$ and satisfies estimate \eqref{bean};

(ii) An oscillatory integral of the form
\begin{equation}
\lambda^{n-1} \int_{\RR^{n-1}} e^{i\Phi(y, y', \sigma, v)/\rho} \tilde a(y, y', \sigma, v, \rho, \lambda) \, dv
\label{lsmoi}\end{equation}
where $\tilde a$ is smooth in all its arguments, and supported in a small neighbourhood of a point $(y_0, y_0, 1, v_0, 0, 0)$ such that $d_v\Phi(y_0, y_0, 1, v_0) = 0$.
Moreover, writing  $w = (w_1, \dots, w_n)$ for a set of coordinates  defining $\diagb \subset \MMkb$, i.e. $w = (y-y', \sigma - 1)$, and $v = (v_2, \dots, v_n)$, one can rotate in the $w$ variables such that  the function $\Phi = \Phi(y, w, v)$ has the properties
\begin{equation}\begin{gathered}
\begin{cases}
\mathrm{(a) \ } d_{v_j} \Phi = w_j + O(w_1), \\
\mathrm{(b) \ }  \Phi = \sum_{j=2}^n v_j d_{v_j} \Phi + O(w_1), \\
\mathrm{(c) \ }  d^2_{v_jv_k} \Phi = w_1 A_{jk}, \\
\mathrm{(d) \ }  d_v \Phi = 0 \implies \Phi/x = \pm   d_{\conic}(y,y', \frac1{x}, \frac{\sigma}{ x})
\end{cases}
\end{gathered}\label{Phi-properties}\end{equation}
where $A_{jk}$ is nondegenerate for all $(y, w, v)$ in the support of $b$. Here $d_{\conic}$ is as in \eqref{dconic}.  \end{proposition}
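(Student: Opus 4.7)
The plan is to extract \eqref{beanQ}-\eqref{beans} from the Legendre-distribution description of $\specl$ on $\MMkb$ from \cite{GHS1}, combined with the microlocal action of the partition in \cite[Sect.~5]{GHS2}, and to supplement that analysis with a normal-form computation for the type-(ii) phase near the self-intersection $N^*\diagb$ of $L^{\bfc}$.

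For the residual cases $j,k\in\{0,1\}$: both Legendrians $L^{\bfc}$ and $L^\sharp$ sit inside the scattering characteristic variety $\Sigma=\{|\mu|_h^2+\nu^2=1\}$ above $\bfc$, while $Q^{\mathrm{low}}_1$ is constructed with scattering symbol vanishing on a neighbourhood of $\Sigma$. Composing $Q^{\mathrm{low}}_1$ on either side with $\specl$ therefore annihilates the principal Legendre symbol at both Legendrians, and the symbolic Legendre calculus identifies the product as residual, with the rapid decay in $\lambda d(z,z')$ required by \eqref{beans} supplied by the scattering calculus combined with Lemma~\ref{lem:dist}. For $(j,k)=(0,0)$, I would expand $Q_0^{\mathrm{low}}=\Id-\sum_{i\geq 1}Q_i^{\mathrm{low}}$ and combine the resulting cross terms $Q_i^{\mathrm{low}}\,\specl\,Q_{i'}^{\mathrm{low}}{}^*$ (residual for $1\in\{i,i'\}$, of type (i)/(ii) for $i,i'\geq 2$), reducing the remaining $Q_0^{\mathrm{low}}\,\specl\,Q_0^{\mathrm{low}}{}^*$ component to a residual piece after exploiting the outgoing/incoming phase oscillation at $L^\sharp$ and repeated integration by parts in the angular variables, following the strategy of \cite[Sect.~5]{GHS2}.

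For $j\geq 2$, I would use the standard representation of a Legendre distribution as an oscillatory function where the projection $\pi\colon L^{\bfc}\to\bfc$ is a local diffeomorphism, and as an oscillatory integral elsewhere. The italicized statement preceding the proposition identifies the two branches of $L^{\bfc}\setminus N^*\diagb$ as generated by $\pm d_{\conic}$, so on the complement of a small neighbourhood of $N^*\diagb$ the localized spectral measure takes the type-(i) form \eqref{a}, with amplitude equal (up to smooth factors) to the principal Legendre symbol supplied by \cite{GHS1}; the support conditions $x,x'\leq\eta$ and $d_{\partial M}(y,y')\leq\eta$ are inherited from the small-support conditions on the symbols of $Q^{\mathrm{low}}_j$ via the choice of $\epsilon$ and the $\nu$-intervals used to split the $Q^{\mathrm{low}}_j$, $j\geq 2$. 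The amplitude bound \eqref{bean} then follows from smoothness of the symbol up to $\zf$ together with Lemma~\ref{ledb}.

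Near $N^*\diagb$, the projection $\pi$ degenerates: the two sheets of $L^{\bfc}$ coalesce cleanly along a hypersurface. I would use a H\"ormander-style parametrization of $L^{\bfc}$ by a phase $\Phi(y,y',\sigma,v)$ with auxiliary fibre variables $v=(v_2,\dots,v_n)$, yielding the type-(ii) form \eqref{lsmoi}. To verify \eqref{Phi-properties} I would choose coordinates so that $w_1$ is tangent to the direction along which the two sheets coalesce while $(w_2,\dots,w_n)$ are transverse to it, and then Taylor expand $\Phi$ about $N^*\diagb$: the contact-form vanishing condition \eqref{LPhiparam} gives (a); (b) is the Euler-type homogeneity forced by the vanishing of the contact form on $\{d_v\Phi=0\}$; (c) expresses the first-order vanishing of $d^2_{vv}\Phi$ in the distinguished direction $w_1$; and (d) identifies the critical value of the parametrization with the generating function of each smooth sheet, which is $\pm d_{\conic}$ by the italicized statement. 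Nondegeneracy of $A_{jk}$ off $N^*\diagb$ reduces to clean transversality of the two sheets of $L^{\bfc}$, verifiable directly from the geodesic-flow parametrization \eqref{gamma^2}. The main obstacle is constructing a single $\Phi$ realizing all four items of \eqref{Phi-properties} simultaneously: this amounts to a Morse-type normalization of the Hessian $d^2_{vv}\Phi$ adapted to the distinguished direction $w_1$ in a manner compatible with the contact structure.
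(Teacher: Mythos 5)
Your handling of the residual case $(j,k)=(0,0)$ does not work. Expanding $Q_0^{\mathrm{low}}=\Id-\sum_{i\geq 1}Q_i^{\mathrm{low}}$ and distributing both factors produces, among other terms, $\Id\cdot\specl\cdot\Id=\specl$, which is the full spectral measure and is certainly \emph{not} residual -- its off-diagonal decay is only $(1+\lambda d(z,z'))^{-(n-1)/2}$, not $O((1+\lambda d(z,z'))^{-K})$ for all $K$. The proposed plan of ``reducing the remaining $Q_0^{\mathrm{low}}\specl Q_0^{\mathrm{low}*}$ component to a residual piece'' is therefore circular: that is the very claim one is trying to prove, and the expansion you start with is the partition-of-unity identity itself, so it cannot produce new information. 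The paper's argument is of a different and much more economical kind: it works at the level of operator wavefront sets, observing that $Q_0^{\mathrm{low}}$ has empty wavefront set while $Q_1^{\mathrm{low}}$ has wavefront set disjoint from the characteristic variety (which contains the microlocal support of $\specl$), so by the microlocal support lemma of \cite[Section 5]{GHS2} the composition $Q_j^{\mathrm{low}}\specl Q_k^{\mathrm{low}*}$ for $j,k\in\{0,1\}$ vanishes rapidly at $\bfc$, $\lb$, $\rb$; polyhomogeneity of $\specl$ to order $n-1$ at $\zf,\lbo,\rbo,\bfo$ then gives \eqref{beans}.

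For items (i), (ii) and the phase properties \eqref{Phi-properties}, you and the paper diverge in a different way. The paper does not re-derive properties (a)--(c) of the phase: it simply cites \cite[Lemma 6.5 and Proposition 6.2]{GHS2}, where a phase with exactly those normal-form properties is constructed (as a variant of \cite[Theorem 21.2.18]{Ho3}), and the only thing it adds is a short explicit computation verifying property (d): from \eqref{LPhiparam} one has $\Phi=\nu+\sigma\nu'$ on the critical set, and substituting the parametrization \eqref{gamma^2} gives $\Phi^2=1+\sigma^2-2\sigma\cos d_{\partial M}(y,y')$, i.e.\ $\Phi/x=\pm d_{\conic}$. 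Your proposal, by contrast, tries to establish (a)--(d) from scratch by a ``Morse-type normalization,'' and you yourself flag the main obstacle -- constructing a single $\Phi$ realizing all four items simultaneously -- without resolving it; this is precisely the nontrivial content supplied by \cite{GHS2}. Moreover your justification of (d) appeals to the italicized statement preceding the proposition (``the two sheets are parametrized by $\pm d_{\conic}$''), but that statement is exactly what the paper's proof establishes; taking it as given is circular. The remainder of your outline (that residuality for $Q_1$ comes from its symbol vanishing near $\Sigma$, that the type-(i) form holds where $L^{\bfc}$ projects diffeomorphically, and that the support conditions on $a$ come from the localization of the $Q_j^{\mathrm{low}}$) does track the paper's treatment.
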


\begin{proof}
The statement about $Q^{\mathrm{low}}_j(\lambda) \specl Q^{\mathrm{low}}_k(\lambda)^*$ for $j,k=0,1$, follows from the microlocal support estimates in \cite[Section 5]{GHS2}. In fact, $Q^{\mathrm{low}}_0(\lambda)$ has empty wavefront set, while $Q^{\mathrm{low}}_1(\lambda)$ has wavefront set disjoint from the characteristic variety of $\mathbf{H} - \lambda^2$, which contains the microlocal support of $\specl$. It follows that the operators $Q^{\mathrm{low}}_j(\lambda) \specl Q^{\mathrm{low}}_k(\lambda)^*$, for $j,k=0,1$, vanish rapidly at $\bfc$, $\lb$ and $\rb$. Also, as shown in \cite{GHS1}, $\specl$ is polyhomogeneous at the other boundary hypersurfaces of $\MMkb$, namely $\zf, \lbo, \rbo$ and $\bfo$, vanishing to order $n-1$ at each of these faces. Since the $Q^{\mathrm{low}}_j(\lambda)$ are pseudodifferential operators of order zero, the same is true of the composition $Q^{\mathrm{low}}_j(\lambda) \specl Q^{\mathrm{low}}_k(\lambda)^*$, for $j,k=0,1$ (see \cite[Lemma 5.2]{GHS2}). To translate this into an estimate, we observe that $\lambda$ is a product of boundary defining functions for $\zf, \lbo, \rbo$ and $\bfo$, while  a product of boundary defining functions for $\bfc$, $\lb$ and $\rb$ is 
$O((1 + \lambda d(z,z'))^{-1})$. 
The estimate \eqref{beans} follows directly.

We next discuss (i) and (ii). Everything in this statement has been proved in \cite[Lemma 6.5 and Proposition 6.2]{GHS2} except for the statement that $\Phi$ is given by the conic distance function when $d_v \Phi = 0$. To see this, we use
the explicit formula \eqref{dconic} for the conic distance function, the relation \eqref{LPhiparam}, and the  description of the  Legendre submanifold $L^{\bfc}$ in \eqref{gamma^2}. From \eqref{LPhiparam}, it follows that $\Phi = \nu + \sigma \nu'$. Writing $\nu$ and $\nu'$ in terms  of $s$ and $s'$, using \eqref{gamma^2}, we see that
$$
d_v \Phi = 0 \implies \Phi = - \cos s + \sigma \cos s'.
$$
If we square this then we get
$$
d_v \Phi = 0 \implies \Phi^2 = \cos^2 s + \sigma^2 \cos^2 s' - 2\sigma \cos s \cos s'.
$$
We can write the RHS in the form
$$
1 - \sin^2 s + \sigma^2 (1 - \sin^2 s') - 2 \sigma \big( \cos(s-s') - \sin s \sin s' \big).
$$
Noting that $\sin^2 s + \sigma^2 \sin^2 s' = 2 \sigma \sin s \sin s'$, using the expression for $\sigma$ in \eqref{gamma^2}, we see  that
$$
d_v \Phi = 0 \implies \Phi^2 = 1 + \sigma^2 - 2 \sigma \cos d_{\partial M}(y,y') .
$$

\end{proof}

\begin{remark} It might help to give an example to show how \eqref{Phi-properties} works. In Euclidean space,
the Schwartz kernel of the spectral measure
$dE_{\sqrt{\Delta}}(\lambda)$ of $\sqrt{\Delta}$ is given by
$$dE_{\sqrt{\Delta}}(\lambda;z,z')=\frac{\lambda^{n-1}}{(2\pi)^n}\int_{\mathbb{S}^{n-1}} e^{i\lambda(z-z')\cdot\zeta}d\zeta,$$
one can find the phase function $(z - z') \cdot \zeta$, where $\zeta
\in \mathbb{S}^{n-1}$. Locally near $\zeta = (1, 0, \dots, 0)$, we
can write $\zeta = (\sqrt{1 - |v|^2}, v_2, \dots, v_n)$. Write $x =
|z|^{-1}$ and  $w = (z-z')/|z|$. Then the phase function becomes
$$
\Phi =  w_1\sqrt{ 1 - v_2^2 - \dots - v_n^2} + \sum_{j=2}^n w_j v_j ,
$$
and we can check that properties $\mathrm{(a)}-\mathrm{(d)}$ of
\eqref{Phi-properties} hold in this case.
\end{remark}

%%%%%%%%%%%%%%%%%%%%%%%%%%%%%%%%%
%%%%%%%%%%%%%%%%%%%%%%%%%%%%%%%%%
%%%%%%%%%%%%%%%%%%%%%%%%%%%%%%%%%

\section{Spectral measure and partition of the identity at high energies}\label{sec:high}
In the previous section we recalled the partition of the identity
operator and the structure of the localized spectral measure for low energy i.e.
$0<\lambda\le \lambda_0$. We now do the same for high energies,
$\lambda\in[\lambda_0,\infty)$. For the sake of convenience, we
introduce the semiclassical parameter $h=\lambda^{-1}$ (which should
not be confused with $h$ in the metric $g$), so that we pay our
attention to the range $h\in(0,h_0]$, where
$h_0=\lambda_0^{-1}$. The spectral measure of the operator
$\mathbf{H}$ for high energy was constructed in \cite{HW} on the
high energy space $\mathbf{X}$. Our main task is to adapt each of
main results in the previous section to the high energy setting.

\subsection{High energy space} The high energy $\mathbf{X}$, introduced in   \cite{HW}, is defined by $\mathbf{X}=[0,h_0]\times M_b^2$, where
$M_b^2=[M^2;\partial M\times\partial M]$ is as in \eqref{M2b}. We label the boundary hypersurfaces in
$\mathbf{X}$ by $\rb$, $\lb$, $\bfc$ and $\mf$, according as they are the
lifts to $\mathbf{X}$ of the faces
$$[0,h_0]\times M\times \partial M, \quad [0,h_0]\times \partial M\times
M, \quad [0,h_0]\times \partial M\times \partial M, \text{ or } \{0\}\times M^2$$ of $[0, h_0] \times M^2$, respectively. The labelling of
boundary hypersurfaces is consistent with the notations defined in
the low energy space, since when $\lambda\in(C^{-1},C)$ (where
$\lambda=1/h$) the spaces both have the form $(C^{-1},C)\times
M_b^2$. Recall $\sigma=x/x'$, we can use coordinates
$(y,y',\sigma,x',h)$ near $\mathrm{bf}$ and away from $\mathrm{rb}$,
and coordinates $(y,y',\sigma^{-1},x,h)$ near $\mathrm{bf}$ and away
from $\mathrm{lb}$. We use coordinates $(z,z',h)$ away from
$\mathrm{bf}$, $\mathrm{rb}$ and $\mathrm{lb}$.

\subsection{Semiclassical scattering pseudodifferential operators} We recall
the space
$\Psi_{\mathrm{sc},h}^{m,l,k}(M;\leftidx{^{s\Phi}}\Omega^{1/2})$ of
semiclassical scattering pseudodifferential operators, introduced by
Wunsch and Zworski \cite{WZ} based on Melrose's scattering calculus \cite{Mel}.
Such operators are indexed by the differential order $m$, the boundary
order $l$ and the semiclassical order $k$ .  One can express this
space in terms of the space with $l=k=0$ by
%$\Psi_{\mathrm{sc},h}^{m,0,0}(M;\leftidx{^{s\Phi}}\Omega^{1/2})$ by
$$\Psi_{\mathrm{sc},h}^{m,l,k}(M;\leftidx{^{s\Phi}}\Omega^{1/2})=x^lh^{-k}\Psi_{\mathrm{sc},h}^{m,0,0}(M;\leftidx{^{s\Phi}}\Omega^{1/2}).$$
The Schwartz kernel of semiclassical pseudodifferential operator
$A\in
\Psi_{\mathrm{sc},h}^{m,0,0}(M;\leftidx{^{s\Phi}}\Omega^{1/2})$
takes the following form on $\mathbf{X}$. Near the diagonal $\diagb
\subset M^2_b$ and away from $\bfc$, it takes the form
\begin{equation}
h^{-n}\int e^{i(z-z')\cdot\zeta/h}a(z,\zeta,h)d\zeta \ \Big| \frac{dg dg' dh}{h^2} \Big|^{1/2}, \quad n = \dim M,
\label{psi-he-int}\end{equation}
while near the boundary of the diagonal, $\diagb \cap \bfc$, it takes the form
\begin{equation}\label{psi-he}
h^{-n}\int
e^{i((y-y')\cdot\mu+(\sigma-1)\nu)/(hx)}a(x,y,\mu,\nu,h)d\mu d\nu \
\Big| \frac{dg dg' dh}{h^2} \Big|^{1/2}
\end{equation}
Here $a$ is a symbol of order $m$ in the variable $\zeta$
or $(\eta,\nu)$ variables and is smooth in the remaining variables.
%satisfies $h^k x^{-l}\sigma^{-m}a\in
%C^\infty([0,h_0)\times \leftidx^{\mathrm{sc}}\bar{T}^*M)$, where
%$\sigma$ is a boundary defining fucntion for boundary hypersurface
%of $\leftidx^{\mathrm{sc}}\bar{T}^*M$ at fiber-infinity, that is
%$\sigma=|(\eta,\nu)|$. Since the case in the interior is same as the
%classical pseudos, so we only need consider the case near the
%boundary i.e. $a$ is supported on $xh\leq c$ for small constant $c$.
Finally, away from $\diagb$, the kernel of $A$ is smooth and vanishes to all orders at $\bfc$, $\lb$, $\rb$ and $\mf$.

\begin{lemma}\label{hedb}
If $A\in
\Psi_{\mathrm{sc},h}^{m,0,0}(M;\leftidx{^{s\Phi}}\Omega^{1/2})$ then
$(h
\partial_h)^N A$ is also a pseudodifferential operator of
order $m$, i.e. $(h \partial_h)^N A \in
\Psi_{\mathrm{sc},h}^{m,0,0}(M;\leftidx{^{s\Phi}}\Omega^{1/2})$.
\end{lemma}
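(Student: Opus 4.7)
I plan to mirror the proof of Lemma \ref{ledb}, which is the low-energy analogue of the present statement under the identification $\lambda = 1/h$ (so that $\lambda\partial_\lambda = -h\partial_h$). First I reduce to the case $N = 1$ by induction, and then verify the claim separately in each local representation of the kernel of $A$. Away from the diagonal, the kernel of $A$ is smooth on $\mathbf{X}$ and vanishes to infinite order at $\bfc$, $\lb$, $\rb$, $\mf$, and both of these properties are manifestly stable under $h\partial_h$, so no work is required there. The real content of the lemma is therefore the behaviour of $h\partial_h$ on the oscillatory representations \eqref{psi-he-int} and \eqref{psi-he}.

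Next I would differentiate under the integral sign and apply Leibniz's rule. The derivative can land on the prefactor $h^{-n}$ (producing a constant multiple of the original kernel), on the amplitude $a$ (producing another classical symbol of order $m$, since the symbol class is closed under the parameter derivative $h\partial_h$), or on the phase. Only this last case needs work. In both representations the phase is homogeneous of degree $-1$ in $h$ and of degree $+1$ in the fibre variable ($\zeta$ in \eqref{psi-he-int}, respectively $(\mu,\nu)$ in \eqref{psi-he}), so by Euler's identity
\begin{equation*}
h\partial_h\, e^{i(z-z')\cdot\zeta/h} = -\zeta\cdot\partial_\zeta\, e^{i(z-z')\cdot\zeta/h},
\end{equation*}
and analogously for the boundary phase with the radial vector field $\mu\cdot\partial_\mu + \nu\partial_\nu$ replacing $\zeta\cdot\partial_\zeta$.

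Finally I integrate by parts to move this fibre-radial derivative onto the amplitude. This produces a new amplitude of the form $\zeta\cdot\partial_\zeta a + n\,a$ (or its boundary analogue $(\mu\cdot\partial_\mu+\nu\partial_\nu)a + n\,a$). Since classical symbol classes in $\zeta$ or $(\mu,\nu)$ are closed under the radial vector field, the new amplitude is again a classical symbol of the same order $m$ with the same smooth dependence on the remaining variables. Hence the resulting oscillatory integral is the kernel of an element of $\Psi_{\mathrm{sc},h}^{m,0,0}(M;\leftidx{^{s\Phi}}\Omega^{1/2})$, and iterating the procedure $N$ times yields the lemma. The main obstacle, as in Lemma~\ref{ledb}, is simply recognising that $h\partial_h$ applied to the phase may be traded for a fibre-radial derivative via Euler's relation; once this observation is made, the integration by parts is routine and no genuinely new difficulty appears in the semiclassical setting.
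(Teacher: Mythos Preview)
Your proposal is correct and follows essentially the same approach as the paper: reduce to $N=1$ by induction, observe that away from the diagonal the kernel is smooth and $O(h^\infty)$ so the result is trivial there, and near the diagonal use the homogeneity of the phase to trade $h\partial_h$ on the exponential for the fibre-radial derivative, then integrate by parts. The paper's proof is slightly terser (it does not single out the $h^{-n}$ prefactor or write out the Euler identity explicitly), but the argument is identical in substance.
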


\begin{proof} Away from the diagonal, the result is trivial, as the kernel is smooth and $O(h^\infty)$. So consider the representations \eqref{psi-he-int} and \eqref{psi-he}.
The proof is parallel to the argument in Lemma
\ref{ledb}.
By induction, we only need consider $N=1$. If $h
\partial_h$ hits the function $a$ in \eqref{psi-he}, then $a$
is still a symbol of order $m$ in the $(\eta, \nu)$ variables,
smooth in $(h, x, y)$ and supported where $xh \leq c$. On the other
hand, if $h \partial_h$ hits the phase, this is the same as $\nu
\partial_\nu + \eta \cdot \partial_\eta$ hitting the phase, as it
brings a factor which is homogeneous of degree $-1$ in $h$ and of
degree $1$ in $(\nu, \eta)$. Integrating by parts we obtain another
symbol $\tilde a$ of order $m$. The argument for \eqref{psi-he-int} is analogous.
This completes the proof.
\end{proof}

\begin{lemma}\label{hekb}
If $A
\in\Psi_{\mathrm{sc},h}^{m,0,0}(M;\leftidx{^{s\Phi}}\Omega^{1/2})$,
and if $m < -n$, then $A$ satisfies a kernel bound
\begin{equation}
\Big| A(z, z') \Big| \leq h^{-n} \big( 1 + h^{-1}d(z,z') \big)^{-N}
\label{Azz'}\end{equation}
for any $N \in \NN$.
\end{lemma}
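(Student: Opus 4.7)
The plan is to mimic the proof of Lemma \ref{lekb}, modified to account for the different scaling ($h = 1/\lambda$) and for the two coordinate regimes in which the kernel of $A$ is expressed. We split $\mathbf{X}$ into three regions: (i) away from $\diagb$, where the kernel is smooth and vanishes to infinite order at $\bfc, \lb, \rb, \mf$; (ii) near $\diagb$ but away from $\bfc$, where \eqref{psi-he-int} applies; and (iii) near $\diagb \cap \bfc$, where \eqref{psi-he} applies. In region (i) the bound \eqref{Azz'} is immediate since $A = O(h^\infty)$ with rapid decay at the boundaries.

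For region (ii), I would first note that since $a(\cdot, \zeta, h)$ is a symbol of order $m < -n$ in $\zeta$, the integral \eqref{psi-he-int} is absolutely convergent, giving $|A(z,z')| \le C h^{-n}$. To extract decay in $d(z,z')$, apply the differential operator
$$L = \frac{1 - \Delta_\zeta}{1 + h^{-2}|z-z'|^2},$$
which preserves the exponential $e^{i(z-z')\cdot\zeta/h}$. Applying $L^N$ and integrating by parts $N$ times (derivatives of $a$ of order $2N$ in $\zeta$ are still symbols of order $m - 2N < -n$, so the integral remains absolutely convergent), one obtains
$$|A(z,z')| \le C_N h^{-n} \bigl(1 + h^{-2}|z-z'|^2\bigr)^{-N}.$$
Since we are in the interior and close to the diagonal, $|z-z'|$ is comparable to $d(z,z')$, yielding \eqref{Azz'}.

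For region (iii), an analogous argument applies using the representation \eqref{psi-he}. Absolute convergence of the $(\mu,\nu)$ integral (valid because $m < -n$) gives the prefactor $h^{-n}$. The operator
$$L = \frac{1 - \partial_\nu^2 - \sum_{i} \partial_{\mu_i}^2}{1 + (hx)^{-2}\bigl((\sigma-1)^2 + |y-y'|^2\bigr)}$$
leaves the phase invariant, and $N$ integrations by parts give
$$|A(z,z')| \le C_N h^{-n} \Bigl(1 + (hx)^{-2}\bigl((\sigma-1)^2 + |y-y'|^2\bigr)\Bigr)^{-N}.$$
By Lemma \ref{lem:dist} (whose statement is purely geometric on $M^2_b$ and therefore applies equally in the high-energy space), the quantity $|w|/x$ with $w = (\sigma-1, y-y')$ is comparable to $d(z,z')$, so $(hx)^{-2}\bigl((\sigma-1)^2 + |y-y'|^2\bigr)$ is comparable to $h^{-2} d(z,z')^2$, and the bound \eqref{Azz'} follows.

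The only mild subtlety is to ensure the partition into regions (i)--(iii) is handled consistently — in particular, that in region (ii) the symbol $a(z, \zeta, h)$ may depend on $z$ but derivatives in $\zeta$ do not act on $z$, and similarly in region (iii) derivatives in $(\mu, \nu)$ do not act on $(x, y)$, so no boundary terms in $h$ or $x$ appear. This is routine and parallel to Lemma \ref{lekb}; no genuinely new ideas are required beyond recognizing that $h^{-1}$ plays the role that $\lambda$ plays in the low-energy calculation.
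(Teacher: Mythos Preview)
Your proposal is correct and follows essentially the same approach as the paper: split into the off-diagonal region (where rapid vanishing at all boundaries gives the bound directly) and the near-diagonal region, then in the latter use absolute convergence of the oscillatory integral for $m<-n$ together with repeated integration by parts against the phase-preserving operator $(1-\Delta_{\text{fibre}})/(1+h^{-2}d^2)$. The paper's proof is slightly terser---it treats only the boundary representation \eqref{psi-he} explicitly and leaves the interior case \eqref{psi-he-int} as analogous---whereas you spell out both regimes and invoke Lemma~\ref{lem:dist} for the distance comparison, but there is no substantive difference.
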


\begin{proof} This estimate is straightforward  away from the diagonal, as the Schwartz kernel of $A$ vanishes rapidly at all boundaries away from the diagonal. On the other hand, the RHS is a positive multiple of $h^{N-n} \rho_{\lb}^N \rho_{\bfc}^N \rho_{\rb}^N$ away from the diagonal.

Near the diagonal, we have the representations \eqref{psi-he-int} and \eqref{psi-he}.
The argument in the same sprit as Lemma \ref{lekb}.
If the order $m$ is less than $-n$, then the integral \eqref{psi-he}
is absolutely convergent, showing  that the kernel of $h^{n} A$ is
uniformly bounded. Next, we note that the differential operator
$$
\frac{1 - \partial_\nu^2 - \sum_i \partial_{\eta_i}^2}{ 1 + \big(
(hx)^{-2} (\sigma - 1)^2  + (hx)^{-2} |y-y'|^2 \big)}
$$
leaves the exponential in \eqref{psi-he} invariant. By integrating
by parts N-times, we see that the integral is bounded by
$$
C_N  \Big( 1 +  \big( (hx)^{-2} (\sigma - 1)^2  + (hx)^{-2}
|y-y'|^2\big) \Big)^{-N}
$$
for any $N$. Finally, we note that the square of the Riemannian
distance on $M$ is comparable to
$$
\frac{(\sigma - 1)^2}{x^2} + \frac{|y-y'|^2}{x^2},
$$
so the integral is bounded by $C_N (1 + h^{-1}d(z,z'))^{-N}$ for any
$N$.
\end{proof}

\begin{corollary}\label{cor:bdd-he}
If $A
\in\Psi_{\mathrm{sc},h}^{m,0,0}(M;\leftidx{^{s\Phi}}\Omega^{1/2})$,
and if $m < -n$, then $A$ is bounded $L^2(M^\circ) \to L^2(M^\circ)$
uniformly as $h \to 0$. The same is true for $(h
\partial_h)^N A$ for any $N$.
\end{corollary}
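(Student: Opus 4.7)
The plan is to mirror exactly the low-energy argument (Corollary~\ref{cor:bdd-le}), with the kernel bound from Lemma~\ref{hekb} and the volume estimate for asymptotically conic manifolds feeding into Schur's test.

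First I would apply Lemma~\ref{hekb}: for any $N$, picking $N$ large enough, we have
\begin{equation*}
|A(z,z')| \leq C_N h^{-n} \bigl( 1 + h^{-1} d(z,z') \bigr)^{-N}.
\end{equation*}
Since $(M^\circ, g)$ is asymptotically conic of dimension $n$, the volume of a metric ball satisfies $V(z, r) \leq C r^n$ uniformly in $z \in M^\circ$ and $r > 0$. Then I would estimate one of the two Schur integrals: using the layer-cake/polar substitution $r = d(z,z')$ and then rescaling $s = r/h$,
\begin{equation*}
\int_{M^\circ} |A(z,z')|\, dg(z') \leq C_N h^{-n} \int_0^\infty \bigl(1 + h^{-1} r\bigr)^{-N} r^{n-1}\, dr = C_N \int_0^\infty (1+s)^{-N} s^{n-1}\, ds,
\end{equation*}
which is finite provided $N > n$ and, crucially, independent of $h$. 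The symmetric integral in $z$ is bounded identically. Schur's test then gives $\|A\|_{L^2 \to L^2} \leq C$ uniformly as $h \to 0$.

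For the second assertion, I would invoke Lemma~\ref{hedb}, which tells us that $(h \partial_h)^N A$ remains in $\Psi_{\mathrm{sc},h}^{m,0,0}(M; \leftidx{^{s\Phi}}\Omega^{1/2})$ (still of order $m < -n$). Consequently the same kernel bound from Lemma~\ref{hekb} applies with a new constant, and the Schur argument above yields uniform $L^2$-boundedness of $(h\partial_h)^N A$ for every $N$.

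I do not expect any real obstacle here: the corollary is essentially a direct consequence of the two preceding lemmas, and the main content is simply checking that the Schur integral converges uniformly in $h$, which it does because the natural rescaling $s = h^{-1} r$ absorbs all the $h$-dependence. The only small point worth keeping in mind is to pick $N > n$ at the outset so that the rescaled integral converges at infinity; the factor $h^{-n}$ in front of the kernel is exactly cancelled by the $h^n$ coming from the change of variables in the volume integral, which is what delivers uniformity as $h \to 0$.
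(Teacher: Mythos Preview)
Your proposal is correct and matches the paper's own proof essentially line for line: the paper also invokes the kernel bound of Lemma~\ref{hekb}, the uniform volume estimate $c r^n \leq V(z,r) \leq C r^n$, and Schur's test, with the second assertion following from Lemma~\ref{hedb}. Your write-up is simply a slightly more explicit version of the same argument.
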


\begin{proof} This follows from the kernel bound in Lemma~\ref{hekb} and Schur's test,
since there is a uniform volume estimate $c r^n \leq V(z, r) \leq C r^n$ for the
volume $V(z, r)$ of the ball of radius $r$ centred at $z \in
M^\circ$.
\end{proof}

\subsection{High energy partition of the identity}\label{subsec:partition-he} We now describe the partition of the identity used in Proposition~\ref{prop:localized spectral measure} for high energies. Similar to before, these operators are obtained by quantizing symbols which form a partition of unity (independent of $h$) in the scattering cotangent bundle, ${}^{\sca}T^* M$.
We first choose the symbol of
$Q^{\mathrm{high}}_1$ to vanish where $|\mu|_h^2 + \nu^2 \in [1/2, 3/2]$, and to be identically one where $|\mu|_h^2 + \nu^2 \leq 1/4$ or $|\mu|_h^2 + \nu^2 \geq 2$.  Noting that the symbol of
$\mathrm{Id}-Q^{\mathrm{high}}_1$ is supported close  to the
characteristic variety of $h^2\Delta_g - 1$, that is, the set $\{ |\mu|_h^2 + \nu^2 = 1 \}$, we decompose
$\mathrm{Id}-Q^{\mathrm{high}}_1$ as
$Q^{\mathrm{high}}_2+\cdots+Q^{\mathrm{high}}_{N'}+Q^{\mathrm{high}}_{N'+1}+\cdots+Q^{\mathrm{high}}_{N}$
such that the symbol of  $Q^{\mathrm{high}}_j$, for $j = 2 \dots N'$,  is supported in the set
$$
\{ x \leq \epsilon,  \ \nu \in B_j \}
$$
where the sets $B_j \subset [-2, 2]$  are sufficiently small open intervals with union $[-2, 2]$. For $j \geq N' + 1$, we choose the symbols of $Q^{\mathrm{high}}_j$ so that each one has small support in the interior of $T^* M^\circ \cap
\{ x \geq \epsilon/2 \}$.
Note that we may (and will) assume that $N' = N_l$ and that
$Q^{\mathrm{high}}_j(\lambda) = Q^{\mathrm{low}}_j(\lambda)$ for
intermediate energies, and for $1 \leq j \leq N_l$.

\subsection{Localized spectral measure} In \cite{HW}, Wunsch and the first author showed that the spectral measure
for the Laplacian on this setting is, for high energy, a
Legendre distribution associated to a pair of Legendre submanifolds, $L$ and $L^\sharp$. We briefly explain the meaning of this statement.
The Legendre submanifold $L^\sharp$ has already been defined in Section~\ref{subsec:loc-spec-meas-le}; it lives in the contact manifold ${}^\Phi T^*_{\bfc} M^2_b$, living over the boundary hypersurface $\bfc$. The new Legendre submanifold $L$ encodes the geodesic flow on $T^* M^\circ$. It is a submanifold of $\RR \times {}^{\Phi} T^* M^2_b$, which has a natural contact form, described as follows. We write $\alpha$ for the contact form on ${}^{\sca} T^* M$ induced by the inclusion of $T^* M^\circ$ into ${}^{\sca} T^* M$, and $\alpha, \alpha'$ for the lift of this contact form to ${}^{\Phi} T^* M^2_b$ by the left, resp. right projections. Writing $\tau$ for the coordinate on the $\RR$-factor in $\RR \times {}^{\Phi} T^* M^2_b$, then the contact form  on this space takes the form
$$
\alpha + \alpha' - d\tau.
$$
Then $L$ is given as follows: let $\Sigma$ denote the characteristic variety of $h^2 \Delta_g - 1$, given in local coordinates by $\{ |\zeta|_{g(z)} = 1 \}$ in the interior or $\{ |\mu|_{h(x,y)}^2 + \nu^2 = 1 \}$ near the boundary. Then $L$ is given in terms of geodesic flow $G_t$ by
\begin{equation}
L =  \big\{ (q, q',  \tau) \mid q, q' \in \Sigma, \ q = G_\tau(q') \big\}
\label{L}\end{equation}
(this follows from \cite[Equation 7.9]{GHS2} and the discussion following). In $\RR \times {}^{\Phi} T^* M^2_b$, $L$ can be restricted to $\RR \times {}^{\Phi} T^*_{\bfc} M^2_b$, i.e. restricted to lie over $\bfc$, and then forgetting the $\tau$ component, we obtain the Legendre submanifold $L^{\bfc}$ from Section~\ref{subsec:loc-spec-meas-le}\footnote{The relation between the various Legendre submanifolds is explained in detail in \cite[Part 1]{HW}.}.

As in Section~\ref{subsec:loc-spec-meas-le}, the statement that an operator is Legendrian with respect to $L$ means that its Schwartz kernel can be expressed as an oscillatory function or oscillatory integral using a phase function that locally parametrizes $L$.
In the interior of $\mathbf{X}$, this means a function $\Psi(z, z', v)$ such that, locally,
using coordinates $(z, \zeta, z', \zeta', \tau)$ on $\RR \times {}^{\Phi} T^* M^2_b$, we have
$$
L = \big\{ (z, d_z\Psi, z', d_{z'} \Psi, \Psi)  \mid d_v \Psi = 0 \big\}.
$$
In particular,  $\tau$ is equal to the value of the phase function when $d_v \Psi = 0$.
If there are no $v$ variables, the condition $d_v \Psi = 0$ is omitted, and then $L$ is
(essentially) the graph of the differential of $\Psi$. Near the boundary $\bfc$, we use local coordinates $(x, y, y', \sigma, \mu, \nu, \mu', \nu', \tau)$ and then a local parametrization of $L$ is a function $\Psi(x, y, y', \sigma, v)/x$ such that
$$
L = \big\{ (x, y, y', \sigma, d_y \Psi, \Psi - x d_x \Psi, - \sigma d_\sigma \Psi, \sigma^{-1} d_{y'} \Psi, d_\sigma \Psi, \Psi) \mid d_v \Psi = 0 \big\}.
$$

We give some consequences of this result for the
localized spectral measure needed in this paper. As in the low
energy case, the localized spectral measure refers to any operator of the form
$Q^{\mathrm{high}}(\lambda)dE_{\sqrt{\mathbf{H}}}(\lambda)Q^{\mathrm{high}}(\lambda)^*$ where
$Q^{\mathrm{high}}(\lambda)$ is a member of the partition of the identity operator from Section~\ref{subsec:partition-he}. As above, we write $h = 1/\lambda$.

\begin{proposition}\label{prop:osc-form-high}
Let $Q^{\mathrm{high}}_j(\lambda)$ be a member of the partition of the identity
defined above. Then $Q^{\mathrm{high}}_j(\lambda) \specl Q^{\mathrm{high}}_j(\lambda)^*$ satisfies
\eqref{beans} for $j=1$, while for $j \geq 2$, it can
be written as a finite sum of terms of the following three types:

(i) An oscillatory function of the form
\begin{equation}
h^{-(n-1)} e^{\pm i d(z, z')/h} \tilde a(z, z', h)
\label{atilde}\end{equation} where $\tilde a$ satisfies estimate
\eqref{bean}.

(ii) An oscillatory integral supported where $x, x' \geq \epsilon$ of the form
\begin{equation}
h^{-(n-1)}\int_{\R^{n-1}}e^{
i\Psi(z,z',v)/h}b(z,z',v,h)dv,
\label{hsmoi2}\end{equation}
where $b$ is smooth in all its arguments, and supported in a small neighbourhood of a point $(z_0, z_0, v_0, 0)$ such that $d_v\Psi(z_0, z_0, v_0) = 0$. Moreover,
writing  $w = z-z'$, and $v = (v_2, \dots, v_n)$,
one can rotate in the $w$ variables such that  the function $\Psi = \Psi(z, w, v)$
has the properties
\begin{equation}\begin{gathered}
\begin{cases}
\mathrm{(a) \ } d_{v_j} \Psi = w_j + O(w_1), \\
\mathrm{(b) \ }  \Psi = \sum_{j=2}^n v_j d_{v_j} \Psi + O(w_1), \\
\mathrm{(c) \ }  d^2_{v_jv_k} \Psi = w_1 A_{jk}, \\
\mathrm{(d) \ }  d_v \Psi = 0 \implies \Psi(z,z', v) = \pm   d(z,z')
\end{cases}
\end{gathered}\label{Psi-properties}\end{equation}
where $A_{jk}$ is nondegenerate at $(z_0, z_0, v_0)$, and $d(z,z')$ is the Riemannian distance function on $M^\circ \times M^\circ$;

(iii) An oscillatory integral supported near $x = x' = 0$ of the form
\begin{equation}
h^{-(n-1)}\int_{\R^{n-1}}e^{
i\Psi(y,y',\sigma, x,v)/(hx)}b(y,y',\sigma,x,v,h)dv,
\label{hsmoi}\end{equation}
where $b$ is smooth in all its arguments, and supported in a small
neighbourhood of a point $(y_0, y_0, 1, 0,  v_0, 0)$ such that
$d_v\Psi(y_0, y_0, 1, v_0) = 0$. Moreover, writing  $w = (w_1,
\dots, w_n)$ for a set of coordinates  defining $\diagb\subset
M_b^2$, i.e. $w = (y-y', \sigma - 1)$, and $v = (v_2, \dots, v_n)$,
one can rotate in the $w$ variables such that  the function $\Psi = \Psi(y, w, x, v)$
has the properties
\begin{equation}\begin{gathered}
\begin{cases}
\mathrm{(a) \ } d_{v_j} \Psi = w_j + O(w_1), \\
\mathrm{(b) \ }  \Psi = \sum_{j=2}^n v_j d_{v_j} \Psi + O(w_1), \\
\mathrm{(c) \ }  d^2_{v_jv_k} \Psi = w_1 A_{jk}, \\
\mathrm{(d) \ }  d_v \Psi = 0 \implies \Psi/x = \pm   d(z,z')
\end{cases}
\end{gathered}\label{Psi-properties-2}\end{equation}
where $A_{jk}$ is nondegenerate at $(y_0, y_0, 1, 0,  v_0, 0)$.
\end{proposition}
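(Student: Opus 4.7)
The plan is to mirror the strategy used in Proposition~\ref{prop:osc-form-low}, but now working on the high energy space $\mathbf{X}$ and exploiting the Legendrian structure of the spectral kernel established in \cite{HW}. The case $j=1$ is handled first, and separately: since the symbol of $Q^{\mathrm{high}}_1$ vanishes in a neighbourhood of the characteristic variety $\{|\mu|_h^2+\nu^2 = 1\}$ (resp.\ $\{|\zeta|_g = 1\}$ in the interior), while the microlocal support of $\specl$ is contained in this characteristic variety, a composition argument in the semiclassical scattering calculus (the analog of \cite[Lemma 5.2]{GHS2}) shows that $Q^{\mathrm{high}}_1(\lambda) \specl Q^{\mathrm{high}}_1(\lambda)^*$ is rapidly decreasing as $h \to 0$ and at $\bfc, \lb, \rb$. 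Combined with the polyhomogeneity at $\mf$ inherited from $\specl$, this yields the estimate \eqref{beans} after translating to the distance function via Lemma~\ref{lem:dist}.

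For $j \geq 2$, the symbol of $Q^{\mathrm{high}}_j$ is supported in a small neighbourhood of a point of the characteristic variety. Here we invoke the main result of \cite{HW}: $\specl$ is a Legendre distribution associated to the pair $(L, L^\sharp)$, where $L$ encodes geodesic flow via \eqref{L}. Composition with the pseudodifferential operators $Q^{\mathrm{high}}_j$ preserves the Legendrian structure and localizes the Legendrian support to the image of $\mathrm{WF}'(Q^{\mathrm{high}}_j) \times \mathrm{WF}'(Q^{\mathrm{high}}_j)$ under the flow relation. Since the symbols of $Q^{\mathrm{high}}_j$ ($j \geq 2$) are localized in a small conic neighbourhood of a single point of the characteristic variety, the localized Legendrian is a small neighbourhood of a set of the form $\{(q, G_\tau(q), \tau)\}$, which stays close to the diagonal part of $L$ (i.e.\ $\tau$ near $0$, $q$ near $q'$). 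Writing out the oscillatory representation of a Legendrian distribution using a local parametrization of this piece of $L$ then produces either an oscillatory function (when $L$ projects diffeomorphically to the base, i.e.\ in a deleted neighbourhood of $N^*\diagb$) or an oscillatory integral with $v$ variables (at $N^*\diagb$ itself, where projection fails and extra parameters are needed).

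The split into forms (i), (ii), (iii) is dictated by the stratification of $\mathbf{X}$. Away from $\bfc$ and away from $N^*\diagb$, the relevant phase is just a local generating function for $L$, and the analogue of the italicized statement in Section~\ref{subsec:loc-spec-meas-le} (that $L$ is 2:1 over the base near the diagonal and is parametrized by $\pm d(z,z')$, which follows from the characterization \eqref{L} and the fact that $\tau = \pm d(z,z')$ along geodesics connecting $z$ and $z'$) yields the oscillatory function form \eqref{atilde} with phase $\pm d(z,z')/h$. On the interior of $\mathbf{X}$ but at $N^*\diagb$, one needs a true oscillatory integral with extra phase variables, giving \eqref{hsmoi2}. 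Near $\bfc$, one uses the boundary parametrization of $L$ via $\Psi(y,y',\sigma,x,v)/(hx)$, giving \eqref{hsmoi}. The normal form properties (a)--(c) of \eqref{Psi-properties} and \eqref{Psi-properties-2} are standard consequences of the Morse/splitting lemma applied to the parametrizing function near a nondegenerate critical point of the phase in $v$, after rotating in $w$ so that $w_1$ is the direction transverse to the critical set; these are the direct high-energy analogues of \eqref{Phi-properties} and the argument from \cite[Proposition 6.2]{GHS2} goes through with only notational change.

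The main obstacle, and the step that requires actual computation rather than abstract appeal to \cite{HW}, is property (d), namely showing $d_v \Psi = 0 \implies \Psi = \pm d(z,z')$ (or $\Psi/x = \pm d(z,z')$ near the boundary). In the interior this follows from the defining relation \eqref{L}: at a critical point in $v$, the phase value equals the $\tau$-coordinate of the corresponding point of $L$, and by \eqref{L} this $\tau$ is the geodesic time between $z$ and $z'$, which is $\pm d(z,z')$ (the sign reflecting the two sheets, outgoing versus incoming). Near $\bfc$, one combines this with the restriction-to-$\bfc$ procedure that recovers $L^{\bfc}$ from $L$, and uses the fact that the conic distance $d_{\conic}$ is the leading boundary behaviour of the true Riemannian distance $d(z,z')$ as $x, x' \to 0$ (so $d(z,z') \sim d_{\conic}/x$ in appropriate rescaling). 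The precise computation parallels the squaring argument in the proof of Proposition~\ref{prop:osc-form-low}, now with $\sin s, \cos s$ replaced by the geodesic flow data on $M^\circ$ itself, and can be read off from the explicit description of $L$ in \cite[Sections 3--4]{HW}.
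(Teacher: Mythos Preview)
Your proposal is correct and follows essentially the same route as the paper: treat $j=1$ via disjointness of $\mathrm{WF}'(Q_1^{\mathrm{high}})$ from the characteristic variety, and for $j\geq 2$ use the Legendrian structure of $\specl$ from \cite{HW} together with the microlocalization argument of \cite{GHS2} to localize near $N^*\diagb$, then split according to whether $L$ projects diffeomorphically to the base. The only notable difference is that the paper is more explicit about the construction of $\Psi$ in case (ii): rather than invoking a Morse/splitting lemma abstractly, it writes down $\Psi(w,z',v)=\sum_{j=2}^n(w_j-W_j(w_1,\zeta_2,\dots,\zeta_n,z'))v_j+T(w_1,\zeta_2,\dots,\zeta_n,z')$ after showing $(w_1,\zeta_2,\dots,\zeta_n,z')$ are local coordinates on $L$, from which properties (a)--(d) follow directly (in particular (d) is immediate since $T=\tau$ on $L$); also, the paper points to \cite[Lemmas~7.6,~7.7 and Proposition~7.5]{GHS2} as the high-energy analogues of the low-energy \cite[Proposition~6.2]{GHS2} you cite.
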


\begin{remark} Indeed, noting that $\lambda=1/h$, this is an analogue of
Proposition \ref{prop:osc-form-low} for the case of
$\mathbf{X}=[0,h_0]\times M_b^2$.
\end{remark}

\begin{proof}
The proof is analogous to the proof of Proposition~\ref{prop:osc-form-low}, with the main difference being that the computation takes place over the whole of $M^2_b$ (including the interior), not just at the boundary as is the case in the low energy case.
We prove (ii), i.e.~ we work in the interior of $M^2_b$, using coordinates $(z, z')$, with $z$ a coordinate on the left copy of $M^\circ$, and $z'$ on the right copy. The proof for (iii) is only notationally different.

As in the low energy case, the Legendre submanifold $L$ has the property that it intersects $N^* \diagb$ in a codimension one submanifold, and in a deleted neighbourhood ofr $N^* \diagb$, it projects in a 2:1 fashion down to the base, $\mf = M^2_b$, such that the two sheets are parametrized by the phase functions $\pm d(z, z')$.

We now apply \cite[Lemma 7.6 and (ii) of Lemma 7.7]{GHS2}. This tells us that for any point in the microlocal support of $Q^{\mathrm{high}}_j(\lambda) \specl Q^{\mathrm{high}}_j(\lambda)^*$, either there is a neighbourhood in which $L$ projects diffeomorphically to the base $M^2_b$, or the point lies at the conormal bundle to the diagonal, i.e. $z=z'$, $\zeta = -\zeta'$. In the former case, the function $\pm d(z,z')$ can be used directly as the phase function, and we obtain the statement (i) in the Proposition. In the latter case, a phase function $\Psi$ depending on $n-1$ variables $v_2, \dots, v_n$ can be constructed following the general approach of \cite[Proposition 7.5]{GHS2}. Since this was not written down explicitly in the coordinates $(z,z')$ valid in the interior of $M^2_b$ we sketch briefly how this is done.
It follows from the proof of Lemma 7.6 of \cite{GHS2}, we can rotate coordinates so that $w_1, \zeta_2, \dots, \zeta_n, z'$ give coordinates on $L$ locally. (The proof of Lemma 7.6 shows that one can take $(\tau, \zeta_2, \dots, \zeta_n, z')$ but since it is also shown that $\partial z_1 / \partial \tau \neq 0$, then one can substitute $z_1$ for $\tau$, and then substitute $w_1 = z_1 - z_1'$ for $z_1$.) One can therefore express the  functions $w_2, \dots, w_n$, and $\tau$ on $L$ as smooth functions $W_j(w_1, \zeta_2, \dots, \zeta_n, z')$ and $T(w_1, \zeta_2, \dots, \zeta_n, z')$ of these coordinates. Then the function
$$
\Psi(w, z', v) = \sum_{j=2}^n (w_j - W_j(w_1, \zeta_2, \dots, \zeta_n, z')) v_j + T(w_1, \zeta_2, \dots, \zeta_n, z')
$$
satisfies the requirements of \eqref{Psi-properties}, and parametrizes $L$ locally. This is shown by adapting the argument of \cite[Proof of Proposition 6.2]{GHS2} in a straightforward way (which itself is a minor variation on \cite[Theorem 21.2.18]{Ho3}), so we omit the details. This establishes part (iii) of the Proposition. When working close to $x = x' = 0$, we need to use coordinates as in \cite[Proposition 7.5]{GHS2} and apply \cite[Lemma 7.6 and (i) of Lemma 7.7]{GHS2}, and we end up with the statement in part (ii).
\end{proof}

\begin{remark}\label{rem:dist-conic} The Lagrangian $L$ is smooth up to the boundary when viewed as a submanifold in the `scattering-fibred cotangent bundle' described in \cite{GHS1}. The boundary at $\bfc$ is naturally isomorphic to $L^{\bfc}$ in Proposition~\ref{prop:osc-form-low}. Correspondingly, we find that the distance function $d(z,z')$ on $M^2_b$ satisfies
$$
d(z,z') - d_{\conic}(y, y', \frac1{x}, \frac{\sigma}{x}) = e(z,z')
$$
is a bounded function on $M^2_b$, or more precisely on that part of $M^2_b$ where $x, x' \leq \eta$ and $d_{\partial M}(y, y') \leq \eta$ for sufficiently small $\eta$ (see \cite[Lemma 9.4]{HTW1}). From this we see that the results of Proposition~\ref{prop:osc-form-low} and Proposition~\ref{prop:osc-form-high} are compatible, as the factor $\exp{({i\lambda e(z,z')})}$ which is the discrepancy between \eqref{a} and \eqref{atilde}, and between \eqref{Phi-properties}(d) and \eqref{Psi-properties}(d), can be absorbed in the symbol $\tilde a$, respectively $b$.
\end{remark}

\begin{remark} The results of this paper could be extended to long range scattering metrics, as treated in \cite{HTW}. However, this would require an extension of the results of \cite{HV2}, \cite{HW} and \cite{GHS1} to Lagrangian submanifolds which are only conormal, rather than smooth, at the boundary. If this were done, then the discrepancy $e(z,z')$ between the distance function and the conic distance function is no longer smooth, or even bounded, but rather is conormal at the boundary with a bound of the form $(x+x')^{-1+\epsilon}$ at the boundary of $M^2_b$, i.e. a bit smaller than the distance functions themselves. In this case, the correct description of the localized spectral measure is with the true distance function $d(z,z')$ as phase function, rather than \eqref{a}, which is only true in the short range case.
\end{remark}

%%%%%%%%%%%%%%%%%%%%%%%%%%%%%%%%%%%
%%%%%%%%%%%%%%%%%%%%%%%%%%%%%%%%%%%
%%%%%%%%%%%%%%%%%%%%%%%%%%%%%%%%%%%%

\section{Proof of Proposition~\ref{prop:localized spectral measure}}\label{sec:localized}

We now prove Proposition~\ref{prop:localized spectral measure}.
We define our partition of unity $Q_j$ by combining the low energy and high energy partitions. We choose a cutoff function $\chi(\lambda)$ supported in $[0,2]$ such that $1 - \chi$ is supported in $[1, \infty)$, and define
\begin{equation}\label{Qi}
\begin{split}
Q_1(\lambda)=\chi(\lambda) \big( Q_0^{\mathrm{low}}+Q_1^{\mathrm{low}} \big) + (1 - \chi(\lambda)) Q_1^{\mathrm{high}}, \\
Q_j(\lambda)=
\chi(\lambda)Q_j^{\mathrm{low}} + (1-\chi(\lambda))Q_j^{\mathrm{high}}, \quad \mathrm{for}~ 2\leq j\leq N_l;\\
Q_j(\lambda)=
(1-\chi(\lambda))Q_{j}^{\mathrm{high}}, \quad \mathrm{for}~ N_l+1\leq j\leq N.
\end{split}
\end{equation}

We first note that the term with $Q_1(\lambda)$ satisfies \eqref{beanQ} (with only the `$b$' term present) and \eqref{beans},
according to Proposition~\ref{prop:osc-form-low} and Proposition~\ref{prop:osc-form-high}. (In the case of low energies we also need to use Remark~\ref{rem:dist-conic} which tells us that we can replace the distance function by the conic distance function $d_{\conic}$ in \eqref{beanQ} without affecting the estimates on the amplitudes $a_{\pm}$.)
%allows us to replace the conic distance
%function with the true distance function on the support of $a$, since we have
%$$
%\exp({i \lambda d(z,z')}) = \exp({i \lambda d_{\conic}(z,z')}) \exp(i\lambda e(z,z')), \quad
%\Big| \partial_\lambda^k \exp{(i\lambda e(z,z'))} \Big| \leq C.
%$$
%
%We prove the Proposition initially for  low energies, i.e. for $\lambda \leq 2$. Proposition~\ref{prop:osc-form-low} shows that $Q_1$ satisfies \eqref{beanQ}. For $Q_j$, $j \geq 2$, consider the first type of representation, \eqref{a},
%for the localized spectral measure. In this case, the result follows from
%Thus the factor $\exp(i\lambda e(z,z'))$ can be absorbed into the symbol $a$, without disturbing the validity of the estimates \eqref{bean} (for low energies $\lambda \in \operatorname{supp} \chi$).
%

Next we prove the Proposition for  low energies, i.e. for $\lambda \leq 2$, and for $j \geq 2$.
Consider the second type of representation, \eqref{lsmoi},  in Proposition~\ref{prop:osc-form-low}.
%We need to show that
%\begin{equation}\label{3.26}
%\begin{split}
%a(\lambda,z,z') =e^{\mp i\lambda d(z,z')}\int_{\R^{n-1}}e^{\pm
%i\lambda\Phi(y,w,v)/x}b(\lambda,\rho,y,w_1,v)dv,~\rho=x/\lambda,
%\end{split}
%\end{equation}
%satisfies \eqref{beans}, that is, in view of Lemma~\ref{lem:dist},
%\begin{equation*}
%\Big|(\lambda \partial_\lambda)^\alpha a(\lambda,z,z') \Big|\leq
%C_\alpha \big(1+ \frac{|w|}{\rho}\big)^{-\frac{n-1}2}.
%\end{equation*}
%
We break the estimate into various cases. We first observe that
estimates of the form \eqref{bean} and \eqref{beans} are unaffected
by multiplication by a cutoff function of the form $\chi(\lambda
d(z,z'))$, where $\chi \in C_c^\infty(\RR)$. Therefore, we may treat
the cases that $\lambda d(z,z') \lesssim 1$ and $\lambda d(z,z')
\gtrsim 1$ separately. Consider first the case $\lambda d(z,z')
\lesssim 1$, or equivalently, $|w| \lesssim \rho$. In this case, we
show that the \eqref{lsmoi} has the form \eqref{beanQ} where only
the `$b$' term is present, satisfying \eqref{beans}. Thus, we need
to show that
$$
\big( \lambda \partial_\lambda)^\alpha \int_{\R^{n-1}}e^{
i\lambda\Phi(y,w,v)/x} \tilde a(\lambda,x/\lambda,y,w_1,v) \, dv
$$
is uniformly bounded.
For $\alpha = 0$ this is obvious. So consider the effect of applying  $\lambda
\partial_\lambda$. This is harmless when it
hits $\tilde a$. When it hits the phase it brings down a factor $i\lambda
\Phi/x$. We have $\lambda
\Phi/x = \Phi/\rho = v \cdot d_v \Phi/\rho + O(w_1/\rho)$, and
since $|w| \lesssim \rho$ the $O(w_1/\rho)$ is harmless. To treat the $v
\cdot d_v \Phi/\rho$ term, we can write using (b) of
\eqref{Phi-properties}
$$
\frac{v \cdot d_v \Phi}{\rho} e^{i\Phi/\rho} = -i v \cdot d_v e^{i\Phi/\rho},
$$
and integrating by parts we see that this term is $O(1)$ after integration. Repeated applications of $\lambda
\partial_\lambda$ are treated similarly.

Second, suppose that $|w| \geq C \rho$ for
some large $C$, but that $|w_1| \leq \rho$. For large enough $C$, this means that $d_{v_j} \Phi
\neq 0$, for some $j\geq2$, since by (a) of \eqref{Phi-properties},
we have $d_{v_j} \Phi = w_j - O(w_1)$. So by choosing $j$ so that
$|w_j|$ is maximal, and then $C$ large enough, we have $|d_{v_j}
\Phi| \geq c |w|$. Then we can write $$ e^{i\Phi/\rho} = \Big(
\frac{\rho d_{v_j} }{id_{v_j} \Phi} \Big)^N e^{i\Phi/\rho},
$$
and integrate by parts. Each integration by parts gains us a factor
of $\rho/|w|$. Thus we can estimate \eqref{lsmoi} by
$(1+|w|/\rho)^{-K} = (1+ \lambda d(z,z'))^{-K} $ for any $K$. Estimating the terms for $\alpha > 0$
is done just as in the first case above.

Third, suppose that $|w| \geq C |w_1|$
for some large $C$, and that $|w_1| \geq \rho$. Then we can integrate by parts and gain any
number of factors of $(1+ \lambda d(z,z'))^{-1}$ as in the second case above.

Finally we come to the case where $|w_1| \geq \rho$ and $|w_1|$ is
comparable to $|w|$. In this case,   we have removed a neighbourhood of $N^* \diagb$ from the microlocal support of the
localized spectral measure. As discussed in Section~\ref{sec:low}, in this region the Lagrangian $L^{\bfc}$ is a union of two sheets, each of which projects diffeomorphically to the base $\bfc$, and which are parametrized by the phase function $\pm d_{\conic}$ (in terms of the phase function $\Phi$ as in \eqref{lsmoi}, \eqref{Phi-properties}, this simply corresponds to the sign of $w_1$). We can thus split this case into two parts, according to the sign of $w_1$, and these give rise to the `$\pm$' terms in \eqref{beanQ}.

In this case, the key is to exploit property (c) of \eqref{Phi-properties}.  Define
\begin{equation}\label{3.27}
\begin{split}
\widetilde{\Phi}(x,y,w,v)=|w_1|^{-1}(\Phi(y,w,v) \mp xd(z,z')),
\end{split}
\end{equation}
and let $\omega=|w_1|/\rho$, then we need to estimate
\begin{equation*}
\begin{split}
\lambda^\alpha \partial_\lambda^\alpha
a(\lambda,z,z')&=\sum_{\beta+\gamma=\alpha}\frac{\alpha!}{\beta!\gamma!}\omega^\beta\int\limits_{\R^{n-1}}e^{
i\omega \widetilde{\Phi}(x,y,w,v)}\widetilde{\Phi}^{\beta}
\big(\lambda^\gamma \partial_\lambda^{\gamma}
 \tilde a \big)(\lambda,\rho,y,w_1,v)dv.
\end{split}
\end{equation*}
Let $\tilde{b}=\lambda^{\gamma}\partial_\lambda^{\gamma} \tilde a$, then
$|\partial_{\lambda}^\gamma \tilde{b}|\leq
C_{\gamma}\lambda^{-\gamma}$. Thus note $\omega\geq1$, it reduces to
show for any $0\leq\beta\leq \alpha$
\begin{equation}\label{3.28}
\begin{split}
\Big|\int_{\R^{n-1}}e^{ i\omega
\widetilde{\Phi}(x,y,w,v)}(\omega\widetilde{\Phi})^{\beta}
\tilde{b}(\lambda,\rho,y,w_1,v)dv\Big|\leq C \omega^{-\frac{n-1}2}.
\end{split}
\end{equation}
To proceed, we fix $(x, y, w)$ with $w \neq 0$ (and hence $w_1 \neq 0$ due to our assumption that $|w_1|$ is comparable to $|w|$). We use a cutoff function $\Upsilon$ to divide the $v$ integral into two parts: the support of $\Upsilon$, in which $|d_v \widetilde\Phi| \geq \tilde\epsilon/2,$ and the other on the support of $1 - \Upsilon$, in which $|d_v \widetilde\Phi| \leq \tilde\epsilon$. On the support of $\Upsilon$, we integrate by parts in $v$ and gain any power of $\omega^{-1}$, proving \eqref{3.28}. On the support of $1 - \Upsilon$, we
make the variable change
$$(v_2\cdots,v_n)\rightarrow (\theta_2,\cdots,\theta_{n}),\quad \theta_i=d_{v_i}\widetilde{\Phi},~i=2\cdots, n.$$
Note that by property (c) of \eqref{Phi-properties},
$$
\frac{\partial \theta_j}{\partial v_k} = d^2_{v_jv_k} \widetilde \Phi = \pm A_{jk}.
$$
The nondegeneracy of $A_{jk}$ shows that this change of variables is locally nonsingular, provided $\tilde\epsilon$ is sufficiently small. Thus, for each point $v$ in the support of $1 - \Upsilon$, there is a neighbourhood in which we can change variables to $\theta$ as above. Using the compactness of the support of $b$ in \eqref{lsmoi}, we see that there are a finite number of neighbourhoods covering the intersection of the support of $\Upsilon$ and the $v$-support of $b$. For simplicity of exposition, we assume that there is only one such neighbourhood $U$ below.

Denote $\mathcal{B}_\delta:=\big\{\theta:|\theta|\leq \delta\big\}$, and
choose a  $C^\infty$ function $\chi_{\mathcal{B}_\delta}(\theta)$ which equals
$1$ when on the set $\mathcal{B}_\delta$ but equals $0$ for outside
$\mathcal{B}_{2\delta}$, and with derivatives bounded by
$$
\big| \nabla^{(j)}_\theta \chi_{\mathcal{B}_\delta}(\theta) \big| \leq C \delta^{-j}.
$$
Here $\delta$ is a parameter to be chosen later (depending on $\omega$).
Consider the integral \eqref{3.28} after changing variables and with the cutoff function $\chi_{\mathcal{B}_\delta}(\theta)$ inserted (note that $1 - \Upsilon = 1$ on the support of $\chi_{\mathcal{B}_\delta}(\theta)$, provided $\delta \leq \tilde\epsilon/2$):
\begin{equation*}
\begin{split}
\Big|\int e^{i\omega
\widetilde{\Phi}(x,y,w,\theta)}\big(\omega\widetilde{\Phi}\big)^{\beta}
\tilde{b}(\lambda,\rho,y,w_1,\theta)\chi_{\mathcal{B}_\delta}(\theta)\frac{d\theta}{|A^{-1}(y,w,\theta)|}\Big|.
\end{split}
\end{equation*}
Using property (d) of \eqref{Phi-properties}, we see that
$\widetilde \Phi = 0$  when $\theta = 0$.  Also, due to our choice
of $\theta$, we have  $d_\theta \widetilde \Phi = 0$ when $\theta =
0$. Hence $\widetilde \Phi = O(|\theta|^2)$. Hence
\begin{equation*}
\begin{split}
\Big|\omega^\beta\int e^{ i\omega
\widetilde{\Phi}(x,y,w,\theta)}\widetilde{\Phi}^{\beta}
\tilde{b}(\lambda,\rho,y,w_1,\theta)\chi_{\mathcal{B}_\delta}(\theta)
\frac{d\theta}{|A^{-1}(y,w,\theta)|}\Big|\leq
C(\omega\delta^2)^\beta\delta^{n-1}.
\end{split}
\end{equation*}

It remains to treat the integral with cutoff
$(1-\chi_{\mathcal{B}_\delta}({\theta}))$ inserted. Notice that
$|d_\theta \widetilde \Phi|$ is comparable to $|\theta|$ since
$d_\theta \widetilde \Phi = 0$ when $\theta = 0$, and
$$
d^2_{\theta_i \theta_j} \widetilde \Phi = \sum_{k,l} (A^{-1})_{il}(A^{-1})_{jk} d^2_{v_kv_l}\widetilde\Phi
$$
is nondegenerate when $\theta = 0$.
We define the
 differential operator $L$ by
\begin{equation*}
\begin{split}
L=\frac{-id_{\theta}\widetilde{\Phi}\cdot\partial_{\theta}}{\omega \big|d_{\theta}\widetilde{\Phi}\big|^2}.
\end{split}
\end{equation*}
Then the adjoint operator is given by
\begin{equation*}
\begin{split}
\leftidx{^{t}}L=-L+\frac{i}{\omega}\Big(\frac{\Delta_{\theta}
\widetilde{\Phi}}{|d_{\theta}\widetilde{\Phi}|^2}-2\frac{d^2_{{\theta}_j{\theta}_k}
\widetilde{\Phi} \, d_{{\theta}_j}\widetilde{\Phi} \, d_{{\theta}_k}\widetilde{\Phi}}{|d_{\theta}\widetilde{\Phi}|^4}\Big).
\end{split}
\end{equation*}
Since $L e^{i\omega \widetilde \Phi} = e^{i\omega \widetilde \Phi}$, we integrate by parts $N$ times to obtain
\begin{equation*}
\begin{split}
\Big|\int e^{ i
\omega\widetilde{\Phi}(x,y,w,{\theta})}&(\omega\widetilde{\Phi})^{\beta}
\tilde{b}(\lambda,\rho,y,w_1,{\theta})(1-\chi_{\mathcal{B}_\delta}({\theta}))(1 - \Upsilon) \, d{\theta}\Big|\\&\leq
C \int \Big|(\leftidx{^{t}}L)^N\big((\omega
\widetilde{\Phi})^{\beta}
\tilde{b}(\lambda,\rho,y,w_1,{\theta})(1-\chi_{\mathcal{B}_\delta}({\theta}))(1 - \Upsilon)\big)
\Big|d{\theta}.
\end{split}
\end{equation*}
Inductively we find that
$$\big|(\leftidx{^{t}}L)^N\big((\omega\widetilde{\Phi})^{\beta}
\tilde{b}(1-\chi_{\mathcal{B_\delta}})(1 - \Upsilon)\big)\big|\leq C
\omega^{-N+\beta}\max\big\{|\theta|^{2\beta-2N},
|\theta|^{2\beta-N}\delta^{-N}\big\}.$$
Choosing $N$ large enough,
we get
\begin{equation*}
\begin{split}
\Big|\int e^{ i \omega
\widetilde{\Phi}(x,y,w,{\theta})}&(\omega\widetilde{\Phi})^{\beta}
\tilde{b}(\lambda,\rho,y,w_1,{\theta})
(1-\chi_{\mathcal{B_\delta}})(1 - \Upsilon) \, d{\theta}\Big|\\&\leq
\omega^{-N+\beta}\int_{|\theta|\geq\delta}\big(|\theta|^{2\beta-2N}+
|\theta|^{2\beta-N}\delta^{-N}\big)d\theta\leq C
\omega^{-N+\beta}\delta^{2\beta-2N}\delta^{n-1}.
\end{split}
\end{equation*}
Choose $\delta=\omega^{-1/2}$ to balance the two parts of the integral (with $\chi_{\mathcal{B_\delta}}$ and with $1-\chi_{\mathcal{B_\delta}}$). We finally
obtain
\begin{equation*}
\begin{split}
\Big|\int e^{ i
\omega\widetilde{\Phi}(x,y,w,{\theta})}&(\omega\widetilde{\Phi})^{\beta}
\tilde{b}(\lambda,\rho,y,w_1,{\theta}) (1 - \Upsilon) \,  d{\theta}\Big|\leq C
\omega^{-(n-1)/2},
\end{split}
\end{equation*}
which proves \eqref{3.28} as desired.\vspace{0.2cm}

We next sketch how to prove \eqref{beans} in the high energy case, $i > N_l$. In terms of
Proposition~\ref{prop:osc-form-high}, consider a term of type (iii); it suffices to show
\begin{equation*}
\begin{split}
a(h,z,z') =e^{\mp i d(z,z')/h}\int_{\R^{n-1}}e^{
i\Psi(y,w,x,v)/(xh)}b(h,x,y,w_1,v)dv,
\end{split}
\end{equation*}
satisfies
\begin{equation*}
\Big|(h \partial_h)^\alpha a(h,z,z') \Big|\leq C_\alpha \big(1+
\frac{|w|}{xh}\big)^{-\frac{n-1}2}.
\end{equation*}
Notice that $\lambda=1/h$ and $\Psi$ has the same properties (a) --- (d) as
$\Phi$. Therefore the low energy  proof works verbatim, with the argument $x$ of $\Psi$ acting as a smooth parameter, and leads to the
desired conclusion. The proof in case (ii) works in exactly the same way, with $w$ given by $z-z'$.

\begin{remark} To illustrate this theorem, consider the case of the spectral measure on flat $\RR^3$, which is
$$
dE_{\sqrt{\Delta}}(\lambda)(z,z') = \frac1{2\pi^2} \frac{\lambda^2 \sin \lambda |z-z'|}{\lambda |z-z'|} d\lambda.
$$
We decompose this, using the cutoff function $\chi$ as in
\eqref{Qi},  according to the size of $\lambda|z-z'|$. Where
$\lambda|z-z'| \geq 1$, that is, more than one wavelength from the
diagonal, we split the sine factor into exponential terms. Within
$O(1)$ wavelengths of the diagonal, however, we keep the sine factor
as is, to exploit the cancellation in the difference
$e^{+i\lambda|z-z'|} - e^{-i\lambda|z-z'|}$ when $\lambda |z-z'|$ is
small. This gives as an expression
$$
\frac{\lambda^2}{2\pi^2} \Big( (1 - \chi)(\lambda |z-z'|) \frac{e^{i\lambda |z-z'|}}{2i\lambda |z-z'|} - (1 - \chi)(\lambda |z-z'|) \frac{e^{-i\lambda |z-z'|}}{2i\lambda |z-z'|} +
\chi(\lambda |z-z'|) \frac{\sin \lambda |z-z'|}{\lambda |z-z'|}  \Big).
$$
This is a decomposition into `$\pm$' and `$b$' terms as in \eqref{beanQ}, where the amplitudes satisfy \eqref{bean} and \eqref{beans}. So we can think of the $b$ term as the near-diagonal term, and the other terms as related to the two sheets of the Lagrangian $L$ or $L^{\bfc}$ which are separated away from the diagonal. The function of the microlocalizing operators $Q_j(\lambda)$ (which are not required in the case of flat Euclidean space) is to remove parts of the Lagrangian which do not project diffeomorphically to the base.
\end{remark}

%%%%%%%%%%%%%%%%%%%%%%%%%%%%%%%%%%%%%%%%%%%
%%%%%%%%%%%%%%%%%%%%%%%%%%%%%%%%%%%%%%%%%%%
%%%%%%%%%%%%%%%%%%%%%%%%%%%%%%%%%%%%%%%%%%%

\section{$L^2$ estimates}\label{sec:L2}
In this section, we prove $L^2 \to L^2$ estimates on microlocalized
versions of the Schr\"odinger propagator, using the operator
partition of unity $Q_j$ described at the beginning of the previous section, based on  \cite{GHS2}.\vspace{0.2cm}

We begin by defining microlocalized propagators. First we give a formal definition.
It is not immediately clear that the formal definition is well-defined, so our first task is to show this. We do so by showing that each microlocalized propagator is a bounded operator on $L^2$. This serves both to show the well-definedness of each microlocalized propagator, and to establish the $L^2 \to L^2$ estimate needed for the abstract Keel-Tao argument.

We define, as in the Introduction,
\begin{equation}\label{Uit}
U_j(t) = \int_0^\infty e^{it\lambda^2} Q_j(\lambda)
dE_{\sqrt{\mathbf{H}}}(\lambda)
\end{equation}
where $Q_j$ is the decomposition defined in \eqref{Qi}.

Our first task is to make sense of this expression. We do this by showing that each $U_j(t)$ is a bounded operator on $L^2(M^\circ)$. We have

\begin{proposition}\label{prop:L2}
The integral \eqref{Uit}  defining
$U_j(t)$ are well-defined on each finite interval, and converge on
$\RR_+$ in the strong operator topology to define bounded operators
on $L^2(M^\circ)$. Moreover, the operator norm of $U_j(t)$ on $L^2(M^\circ)$ are
bounded uniformly for $t \in \RR$. Finally, we have
\begin{equation}
\sum_j U_j(t) = e^{it\mathbf{H}}.
\label{sumUi}\end{equation}
\end{proposition}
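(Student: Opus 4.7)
The plan is to decompose $U_j(t)$ dyadically in the spectral parameter $\lambda$ and apply the Cotlar-Stein almost orthogonality lemma. Fix a smooth partition of unity $\sum_{k\in\ZZ}\psi_k(\lambda) = 1$ on $(0,\infty)$ with $\psi_k(\lambda) = \psi(\lambda 2^{-k})$ for some $\psi \in C_c^\infty((1/2, 2))$, and set
$$
U_j^{(k)}(t) := \int_0^\infty e^{it\lambda^2}\psi_k(\lambda) Q_j(\lambda)\, dE_{\sqrt{\mathbf{H}}}(\lambda).
$$
Since each $Q_j(\lambda)$ is uniformly bounded on $L^2$ and $\psi_k$ has compact support, the operator-valued function $\lambda \mapsto e^{it\lambda^2}\psi_k(\lambda) Q_j(\lambda)$ is uniformly bounded and compactly supported, so $U_j^{(k)}(t)$ is well-defined weakly (equivalently, via Bochner integration against the vector-valued measure $dE_{\sqrt{\mathbf{H}}}(\lambda)f$). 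The goals are: (i) a uniform bound on $\|U_j^{(k)}(t)\|$ in both $k$ and $t$, and (ii) off-diagonal decay of the compositions $U_j^{(k)}(t) U_j^{(k')}(t)^*$ and $U_j^{(k)}(t)^* U_j^{(k')}(t)$.

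The uniform bound on each piece is obtained via a $TT^*$ argument. Using the projection-valued nature of $dE$, one checks that
$$
U_j^{(k)}(t) U_j^{(k')}(t)^* = \int_0^\infty \psi_k(\lambda)\psi_{k'}(\lambda)\, Q_j(\lambda)\, dE_{\sqrt{\mathbf{H}}}(\lambda)\, Q_j(\lambda)^*,
$$
which is notably $t$-independent. For $k = k'$, I would substitute the kernel representation from Proposition~\ref{prop:localized spectral measure}, integrate by parts in $\lambda$ exploiting the oscillating phase $e^{\pm i\lambda d(z,z')}$ together with the derivative estimates \eqref{bean}, \eqref{beans} (and Lemmas~\ref{ledb}, \ref{hedb} for the $\lambda$-dependence of $Q_j$). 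This should yield a pointwise Schwartz kernel bound of the form
$$
\big|U_j^{(k)}(t) U_j^{(k)}(t)^*(z,z')\big| \leq C_N\, 2^{nk}\big(1 + 2^k d(z,z')\big)^{-N}\quad\text{for any } N,
$$
so Schur's test together with the volume growth $V(z,r) \lesssim r^n$ gives $\|U_j^{(k)}(t)\|^2_{L^2\to L^2} = \|U_j^{(k)}(t) U_j^{(k)}(t)^*\|\leq C$ uniformly. For $|k-k'| \geq 2$, the same identity and the disjoint supports of $\psi_k,\psi_{k'}$ show $U_j^{(k)}(t) U_j^{(k')}(t)^* = 0$, giving one half of the Cotlar-Stein hypotheses.

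The complementary estimate on $U_j^{(k)}(t)^* U_j^{(k')}(t)$ does \emph{not} reduce to a single spectral integral, because the $Q_j(\lambda)$'s now sit \emph{between} two spectral projections and do not commute with $\sqrt{\mathbf{H}}$. Here one exploits the fact that, for $j \geq 2$, the symbol of $Q_j(\lambda)$ is microlocally concentrated on the rescaled characteristic shell $|(\mu,\nu)|\sim 1$, i.e.~$|\xi|\sim\lambda$; for $\lambda\sim 2^k$ and $\mu\sim 2^{k'}$ with $|k-k'|$ large, the shells are disjoint, so $Q_j(\lambda)^* Q_j(\mu)$ is residual in the appropriate calculus (semiclassical scattering at high energies, or the low-energy calculus of \cite{GH1}), with operator norm rapidly decaying in $|k-k'|$. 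For the remaining index $j=1$, where $Q_1$ is microlocalized away from the characteristic variety, the same conclusion follows directly from the rapid kernel decay \eqref{beans} already satisfied by $Q_1\,dE\,Q_1^*$. Feeding these into the double spectral integral gives $\|U_j^{(k)}(t)^* U_j^{(k')}(t)\| \leq C_N 2^{-N|k-k'|}$, uniformly in $t$, completing the Cotlar-Stein hypotheses and yielding strong convergence of $\sum_k U_j^{(k)}(t)$ to a bounded $U_j(t)$ with norm uniform in $t$. Finally, summing \eqref{Uit} over $j$, using $\sum_j Q_j(\lambda) = \Id$ and the standard functional calculus identity $\int_0^\infty e^{it\lambda^2}\,dE_{\sqrt{\mathbf{H}}}(\lambda) = e^{it\mathbf{H}}$ (valid on the positive spectral subspace, which under \eqref{1.6strong} is all of $L^2$), gives $\sum_j U_j(t) = e^{it\mathbf{H}}$.

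The principal obstacle is the off-diagonal estimate for $U_j^{(k)}(t)^* U_j^{(k')}(t)$ at widely separated scales: one must convert the microlocal disjointness of the $Q_j$-symbols at different values of $\lambda$ into a quantitative $L^2$ operator norm bound, and do so within both the low-energy calculus on $\MMksc$ (where $\rho = x/\lambda$ is the relevant scaling) and the semiclassical scattering calculus (where $h = 1/\lambda$), while using Lemmas~\ref{ledb} and \ref{hedb} to control the operator-valued integrand.
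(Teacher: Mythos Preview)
Your strategy---dyadic decomposition in $\lambda$ plus Cotlar--Stein---is exactly what the paper does, and your treatment of $U_j^{(k)}(t)U_j^{(k')}(t)^*$ (zero for $|k-k'|\ge 2$, uniformly bounded on the diagonal) matches. Your identification of the $U_j^{(k)}(t)^*U_j^{(k')}(t)$ bound as the crux is also correct, and the mechanism you describe (disjoint frequency shells for $Q_j(\lambda)$ versus $Q_j(\mu)$ when $\lambda\sim 2^k$, $\mu\sim 2^{k'}$) is the right intuition. The paper carries this out not by invoking an abstract ``residual in the calculus'' statement---composition of operators at \emph{different} values of the semiclassical/low-energy parameter is not covered by the standard composition theorems---but by writing the kernel of $Q_j(\lambda)^*Q_j(\mu)$ explicitly as an oscillatory integral in the spatial variable $(x',y')$ and integrating by parts once with the operator $\mathcal{L}=i[\lambda(|\nu|^2+|\eta|^2)]^{-1}(x'\eta\cdot\partial_{y'}-\nu x'^2\partial_{x'})$, gaining a factor $\mu/\lambda\sim 2^{-|k-k'|}$. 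This gives only first-order decay, not the $2^{-N|k-k'|}$ you claim, but that already suffices for Cotlar--Stein.

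There is, however, a genuine gap in your treatment of $j=1$. You argue that the rapid decay \eqref{beans} of $Q_1\,dE\,Q_1^*$ handles this case, but \eqref{beans} is a statement about $Q_1(\lambda)\,dE(\lambda)\,Q_1(\lambda)^*$, which enters the $UU^*$ side, not the $U^*U$ side. For $U_1^{(k)}(t)^*U_1^{(k')}(t)$ you need control of $Q_1(\lambda)^*Q_1(\mu)$, and here the microlocal-disjointness argument fails: $Q_1$ contains the piece $Q_0^{\mathrm{low}}$, which is a multiplication operator with no fibre-variable localization at all, so there is no frequency separation to exploit. The paper sidesteps this by observing that $V_m(t):=\sum_j U_{j,m}(t)=\int e^{it\lambda^2}\phi(2^{-m}\lambda)\,dE(\lambda)$ is a spectral multiplier with $\|V_m(t)\|\le 1$ and $\|\sum_{m\in J}V_m(t)\|\le 1$ for any $J$, by ordinary spectral theory; hence it suffices to run Cotlar--Stein for $j\ge 2$ only, where your shell-separation argument is available. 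You should incorporate this reduction.
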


The rest of this section is devoted to proving this Proposition.

Suppose that $A(\lambda)$ is a family of bounded operators on
$L^2(M^\circ)$, compactly supported and $C^1$ in $\lambda \in (0,
\infty)$. Integrating by parts, the integral of
$$
\int_{0}^\infty A(\lambda) \specl
$$
is  given by
$$
%E_{\mathrm{\sqrt{H}}}(b) A(b) - E_{\mathrm{\sqrt{H}}}(b) A(a)   -
- \int_0^\infty \Big( \frac{d}{d\lambda} A(\lambda) \Big) \El \, d\lambda.
$$
In view of Corollaries~\ref{cor:bdd-le} and \ref{cor:bdd-he}, we can take $A(\lambda)$ to be a smooth function of $\lambda$ with compact support in $(0, \infty)$ times  $e^{it\lambda^2}
Q_j(\lambda)$. This means that the integral
\eqref{Uit} is well-defined over any
compact interval in $(0, \infty)$. We need to show that the integral
over the whole of $\RR_+$ converges in the strong operator topology.
To do so,  we introduce a dyadic partition of unity on the
positive $\lambda$ axis by choosing $\phi \in C_c^\infty([1/2, 2])$,
taking values in $[0,1]$, such that
$$
\sum_{m \in \ZZ} \phi\big( \frac{\lambda}{2^m} \big) = 1.
$$
We now define
\begin{equation}\begin{gathered}
U_{j,m}(t) = -\int_0^\infty   \frac{d}{d\lambda} \Big(e^{it\lambda^2} \phi\big( \frac{\lambda}{2^m} \big) Q_j(\lambda) \Big) \El  .
\end{gathered}\label{Ui-sum-defn}\end{equation}
%where we define, for any family of operators $A(\lambda)$ compactly supported in $\lambda$ with $A(\lambda)$, $\partial_\lambda A(\lambda)$ uniformly bounded on $L^2(X)$,
%\begin{equation}
%\int A(\lambda) \specl  :=
%- \int   \Big( \frac{d}{d\lambda}A(\lambda) \Big) \El \, d\lambda.
%\label{Adefn}\end{equation}
%It is clear that the RHS of \eqref{Adefn} is well-defined as a bounded operator on $L^2(X)$, since $A(\lambda)$ has compact support in $\lambda$ and $A(\lambda), \partial_\lambda A(\lambda)$ (and of course $\El$) are uniformly bounded on $L^2(X)$.

We next show that the sum over $m$ of the operators $U_{j,m}(t)$ in
\eqref{Ui-sum-defn} is well-defined. For this we use the
Cotlar-Stein lemma, which we recall here (we use the version in
\cite[Chapter 8]{Gr}):

\begin{lemma}[Cotlar-Stein lemma]\label{Cotlar} Suppose that $A_j$ are a sequence of bounded linear operators on a Hilbert space $H$ such that
\begin{equation}\label{4.8}
\|A_m^*A_n\|_{H \to H}\leq \big(\gamma(m-n)\big)^2,\quad
\|A_mA_n^*\|_{H \to H}\leq \big(\gamma(m-n)\big)^2,\end{equation}
where $\{\gamma(m)\}_{m\in \Z}$ is a sequence of positive constants
such that $C=\sum_{m\in\Z}\gamma(m)<\infty$. Then for all $f \in H$,
the sequence $\sum_{|m| \leq N} A_m f$ converges as $N \to \infty$
to an element $Af \in H$. The operators $A = \sum_m A_m$ and $A^* =
\sum_m A^*_m$ so defined (in the strong operator topology)
 satisfy
\begin{equation}\label{4.9}
\|A\|_{H\rightarrow H}\leq C, \quad \|A^*\|_{H\rightarrow H}\leq C.\end{equation}
Moreover, the operator norms of $\sum_{m \in J} A_m$ and $\sum_{m \in J} A_m^*$  are  bounded by $C$ for any finite subset $J$ of the integers.
\end{lemma}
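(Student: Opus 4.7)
The plan is the classical Cotlar--Stein argument, built on the identity $\|T\|^{2k} = \|(TT^*)^k\|$ valid for every bounded operator $T$ on a Hilbert space and every positive integer $k$. First, applying the first hypothesis in \eqref{4.8} with $m=n$ gives $\|A_m\|^2 = \|A_m^* A_m\| \le \gamma(0)^2$, so each $A_m$ is bounded in norm by $M := \gamma(0)$. Note that $M \le C$.

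Next, fix a finite subset $J \subset \ZZ$ and set $S_J := \sum_{m \in J} A_m$. For any $k \ge 1$,
\begin{equation*}
\|S_J\|^{2k} = \|(S_J S_J^*)^k\| \le \sum_{m_1, n_1, \dots, m_k, n_k \in J} \bigl\| A_{m_1} A_{n_1}^* A_{m_2} A_{n_2}^* \cdots A_{m_k} A_{n_k}^* \bigr\|.
\end{equation*}
Each summand admits two estimates: pairing neighbors as $(A_{m_i} A_{n_i}^*)$ yields $\prod_{i=1}^{k} \gamma(m_i-n_i)^2$ using the second hypothesis in \eqref{4.8}; pairing them shifted as $(A_{n_i}^* A_{m_{i+1}})$, with the leftover factors $\|A_{m_1}\|$ and $\|A_{n_k}^*\|$ bounded by $M$ each, yields $M^2 \prod_{i=1}^{k-1} \gamma(n_i - m_{i+1})^2$ using the first hypothesis. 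Taking the geometric mean of these two bounds gives
\begin{equation*}
\bigl\| A_{m_1} A_{n_1}^* \cdots A_{m_k} A_{n_k}^* \bigr\| \le M \prod_{i=1}^{k} \gamma(m_i - n_i) \prod_{i=1}^{k-1} \gamma(n_i - m_{i+1}).
\end{equation*}

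Third, I would sum this over all $2k$ indices. Parameterize the tuple by $m_1$ together with the $2k-1$ differences $d_i = m_i - n_i$ and $e_i = n_i - m_{i+1}$; letting each difference range over all of $\ZZ$ and $m_1$ range over $J$ gives the upper bound
\begin{equation*}
\|S_J\|^{2k} \le |J| \, M \, C^{2k-1}.
\end{equation*}
Extracting the $2k$-th root yields $\|S_J\| \le (|J|\,M)^{1/(2k)} C^{1 - 1/(2k)}$, and since $k$ was arbitrary we can let $k \to \infty$ (with $J$ fixed) to conclude $\|S_J\| \le C$, uniformly in the finite set $J$. The same reasoning applied with $A_m$ and $A_m^*$ swapped bounds $\|S_J^*\|$ by $C$. \emph{This is the delicate step and the main point of the argument:} for any single $k$ the estimate looks vacuous because of the $|J|$ prefactor, and only in the limit $k \to \infty$ does the factor $|J|^{1/(2k)}$ collapse to $1$ and yield a bound independent of $J$.

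Finally, to upgrade the uniform bound on partial sums to strong convergence, I would argue by density. Let $V \subset H$ be the closed linear span of $\bigcup_p \mathrm{Range}(A_p^*)$. For $f = A_p^* g$ in one of these ranges,
\begin{equation*}
\sum_m \|A_m A_p^* g\| \le \Bigl(\sum_m \gamma(m-p)^2\Bigr) \|g\| \le C^2 \|g\| < \infty,
\end{equation*}
so the series $\sum_m A_m f$ converges absolutely, and the same holds for finite linear combinations of such $f$. For $f \in V^\perp$ one has $\langle A_m f, g\rangle = \langle f, A_m^* g\rangle = 0$ for all $g$, hence $A_m f = 0$ and the series is trivially zero. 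Given arbitrary $f = f_1 + f_2$ with $f_1 \in V$, $f_2 \in V^\perp$, approximate $f_1$ by vectors for which absolute convergence holds and use the uniform bound $\|S_J\| \le C$ established above in a standard Cauchy argument to pass to the limit; this defines $Af \in H$ with $\|A\| \le C$. Applying the same argument to $\{A_m^*\}$ (which satisfies the hypotheses with the same $\gamma$) produces $A^*$ with $\|A^*\| \le C$. The statement about any finite partial sum is already contained in the uniform bound of the third step.
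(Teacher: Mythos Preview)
Your argument is correct and is precisely the classical Cotlar--Stein proof. Note, however, that the paper does not supply its own proof of this lemma: it is quoted as a known result from \cite[Chapter 8]{Gr}, so there is nothing to compare against beyond observing that your writeup reproduces the standard textbook argument (the $\|T\|^{2k}=\|(TT^*)^k\|$ trick, the two pairings and their geometric mean, and the $k\to\infty$ limit), together with a correct density argument for strong convergence.
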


We also use the following Lemma:

\begin{lemma}\label{BB*} Suppose that for $l = 1, 2$,   $A_l(\lambda)$ is a family of operators compactly supported in $\lambda$ in the open interval $(0, \infty)$, and with $A_l(\lambda)$, $\partial_\lambda A_l(\lambda)$ uniformly bounded on $L^2(M^\circ)$. Define
$$
B_l =  \int A_l(\lambda) \specl .
$$
Then
$$
B_1 B_2^* = \int A_1(\lambda) \specl A_2(\lambda)^* ,
$$
where by definition the last expression is equal to
\begin{equation}
\int \Big( - \frac{d}{d\lambda}A_1(\lambda) \Big) \El A_2(\lambda) - A_1(\lambda) \El \Big( \frac{d}{d\lambda}A_2(\lambda) \Big) .
\label{AA*}\end{equation}
\end{lemma}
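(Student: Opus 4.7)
\textbf{Proof proposal for Lemma \ref{BB*}.} The plan is to reduce both $B_1$ and $B_2^{*}$ to integrals against the spectral projector $E_{\sqrt{\mathbf{H}}}(\lambda)$ via integration by parts (using the compact support of $A_l$ in $(0,\infty)$), then compose, use the multiplicativity identity $\El \Em = E_{\sqrt{\mathbf{H}}}(\min(\lambda,\mu))$, and finally carry out the inner integration to recover the formula \eqref{AA*}.

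First I would write, by integration by parts in $\lambda$ (justified by the compact support of $A_l$ in $(0,\infty)$ and the uniform $L^2$-boundedness of $A_l$ and $\partial_\lambda A_l$, so the integrals converge as Bochner integrals in the strong operator topology),
\begin{equation*}
B_1 = -\int \Bigl(\frac{d}{d\lambda}A_1(\lambda)\Bigr) \El \, d\lambda, \qquad
B_2^{*} = -\int \Em \Bigl(\frac{d}{d\mu}A_2(\mu)^{*}\Bigr) \, d\mu,
\end{equation*}
where for $B_2^{*}$ I use that $\El$ is self-adjoint and that taking adjoints reverses the order of composition. Composing these two expressions gives, at least formally,
\begin{equation*}
B_1 B_2^{*} = \int\!\!\int \Bigl(\frac{d}{d\lambda}A_1(\lambda)\Bigr)\El\Em \Bigl(\frac{d}{d\mu}A_2(\mu)^{*}\Bigr) \, d\lambda \, d\mu.
\end{equation*}
The integrand is compactly supported in $(\lambda,\mu)\in(0,\infty)^2$ and uniformly bounded in operator norm, so by testing against a pair of $L^2$ vectors and applying Fubini for the resulting scalar integrals, the order of integration may be exchanged freely.

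The crucial step is then to use the multiplicativity of spectral projectors, $\El \Em = E_{\sqrt{\mathbf{H}}}(\min(\lambda,\mu))$, and split the $(\lambda,\mu)$-plane into the two regions $\{\lambda<\mu\}$ and $\{\lambda>\mu\}$ (the diagonal has measure zero). On $\{\lambda<\mu\}$ we get $\El$, and performing the inner $\mu$-integral gives
\begin{equation*}
\int_\lambda^\infty \frac{d}{d\mu}A_2(\mu)^{*} \, d\mu = -A_2(\lambda)^{*},
\end{equation*}
by compact support of $A_2$. On $\{\mu<\lambda\}$ we get $\Em$, and the inner $\lambda$-integral yields $-A_1(\mu)$ similarly. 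Adding the two contributions produces exactly
\begin{equation*}
B_1 B_2^{*} = \int \Bigl(-\frac{d}{d\lambda}A_1(\lambda)\Bigr)\El A_2(\lambda)^{*}\, d\lambda - \int A_1(\lambda)\El \Bigl(\frac{d}{d\lambda}A_2(\lambda)^{*}\Bigr)\, d\lambda,
\end{equation*}
which matches \eqref{AA*}.

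The only real subtlety, and hence the main obstacle I would need to be careful about, is the justification of Fubini's theorem for operator-valued integrals with $\El$ in the middle. I would handle this by pairing with arbitrary vectors $f,g\in L^2(M^\circ)$: the scalar integrand $\bigl\langle \frac{d}{d\lambda}A_1(\lambda)\El\Em \frac{d}{d\mu}A_2(\mu)^{*} f,\, g\bigr\rangle$ is jointly measurable, compactly supported in $(\lambda,\mu)$, and absolutely bounded by a product of the $L^2$-norms of the operator derivatives times $\|f\|\|g\|$, so classical Fubini applies. Everything else is a manipulation of compactly supported operator-valued integrals where convergence presents no difficulty.
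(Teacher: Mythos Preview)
Your proof is correct and follows essentially the same route as the paper: integrate by parts to express $B_1$ and $B_2^*$ in terms of $\El$, compose to get a double integral, use $\El\Em = E_{\sqrt{\mathbf{H}}}(\min(\lambda,\mu))$, split into the regions $\lambda<\mu$ and $\mu<\lambda$, and integrate out the inner variable in each. Your explicit discussion of the Fubini step (via pairing with $f,g\in L^2$) is a welcome addition that the paper leaves implicit; note also that your final formula with $A_2(\lambda)^*$ agrees with what the paper's own computation produces, and the absence of the star in the displayed \eqref{AA*} appears to be a typo in the statement.
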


\begin{proof} We compute
\begin{equation}\begin{gathered}
B_1 B_2^* = \int \int \Big( \frac{d}{d\lambda}A_1(\lambda) \Big) \El \Em \Big( \frac{d}{d\mu}A_2(\mu)^* \Big) \, d\lambda \, d\mu \\
= \iint_{\lambda \leq \mu} \Big( \frac{d}{d\lambda}A_1(\lambda) \Big) \El \Big( \frac{d}{d\mu}A_2(\mu)^* \Big) \, d\lambda \, d\mu \\ + \iint_{\mu \leq \lambda} \Big( \frac{d}{d\lambda}A_1(\lambda) \Big)  \Em \Big( \frac{d}{d\mu}A_2(\mu)^* \Big) \, d\lambda \, d\mu \\
= \int  \Big( \frac{d}{d\lambda}A_1(\lambda) \Big) \El \big(-A_2(\lambda)^*\big) \, d\lambda   + \int \big(-A_1(\mu) \big)  \Em \Big( \frac{d}{d\mu}A_2(\mu)^* \Big)  \, d\mu \\
= \eqref{AA*}.
\end{gathered}\end{equation}

\end{proof}
Now we show that the sum in \eqref{Ui-sum-defn} is well-defined. We first note a simplification: since the $Q_j(\lambda)$ are a partition of the identity, we have
$$
V_m(t) := \sum_{j=1}^{N} U_{j,m}(t) = \int e^{it\lambda^2} \chi(\lambda) \phi\big( \frac{\lambda}{2^m} \big) \specl,
$$
which is clearly bounded on $L^2(M^\circ)$ with operator norm $\leq 1$ using spectral theory.
Moreover, the sum of any subset of the $V_m$ converges strongly to an operator with norm $\leq 1$.
Due to this, we may ignore the case $j=1$ and prove the $L^2$-boundedness only
for $j \geq 2$.

%Define
%$$
%U_{i,j}(t) = \int e^{it\lambda^2} \chi(\lambda) \phi\big( \frac{\lambda}{2^j} \big) Q_j(\lambda) \specl, \quad 0 \leq i \leq N.
%$$
We have, by Lemma~\ref{BB*},
\begin{equation}\begin{gathered}
U_{j,m}(t) U_{j,n}(t)^* =
\int   \chi(\lambda)^2 \phi\big( \frac{\lambda}{2^m} \big) \phi\big( \frac{\lambda}{2^n} \big)
Q_j(\lambda) \specl Q_j(\lambda)^* \\
= -\int \frac{d}{d\lambda} \Big( \chi(\lambda)^2 \phi\big( \frac{\lambda}{2^m} \big) \phi\big( \frac{\lambda}{2^n} \big)
Q_j(\lambda) \Big) \El Q_j(\lambda)^*  \\
- \int  \chi(\lambda)^2 \phi\big( \frac{\lambda}{2^m} \big) \phi\big( \frac{\lambda}{2^n} \big)
Q_j(\lambda) \El \frac{d}{d\lambda} Q_j(\lambda)^*.
\end{gathered}\label{Uijk}\end{equation}
We observe that this is independent of $t$, and is identically zero unless $|m-n| \leq 2$.
When $|m-n| \leq 2$, we note that the integrand is a bounded operator on $L^2$, with an operator bound of the form $C/\lambda$ where $C$ is uniform, as we see from Corollary~\ref{cor:bdd-le} and the support property of $\phi$. The integral is therefore
uniformly bounded, as we are integrating over a dyadic interval in $\lambda$.

We next consider the operators $U_{j,m}^*(0) U_{j,n}(0)$, just in the case $t = 0$. This has an expression
$$\begin{gathered}
 \iint \El \frac{d}{d\lambda} \Big( \phi\big( \frac{\lambda}{2^m} \big)
Q_j(\lambda)^* \Big) \frac{d}{d\mu} \Big( Q_j(\mu) \phi\big( \frac{\mu}{2^n} \big) \Big) \Em \, d\lambda \, d\mu.
\end{gathered}$$
It is clear that each of these operators is uniformly bounded in $m , n$ in operator norm. To apply Cotlar-Stein, we show a estimate of the form $C 2^{-|m-n|}$ for the operator norm of this term.
Write $Q^*_{j,m}(\lambda), Q_{j,n}(\mu)$ for the operators in
parentheses above.
Consider first the case, $2 \leq j \leq N_l$, in which case $Q_j$ has Schwartz kernel supported near the boundary of the diagonal.
For convenience of exposition, we assume that $\lambda, \mu \leq 2$ (or equivalently, $m, n \leq 1$). Then by the construction of $Q_j$, $2 \leq j \leq N_l$ (see  Section~\ref{subsec:lepoi} and \eqref{Qi}),  the scattering pseudodifferential operators
$Q^*_{j,m}(\lambda), Q_{j,n}(\mu)$ are smooth and compactly
supported in $x'/\lambda,x'/\mu$ respectively and are microlocally
supported near the characteristic set. More precisely, we see the
composition of the two scattering pseudodifferential operators for
$j \geq 2$ takes the form
\begin{equation*}
\begin{split}
&Q^*_{j,m}(\lambda)Q_{j,n}(\mu)\\&=\int
e^{-i\lambda\big((y-y')\cdot\eta+(\sigma-1)\nu\big)/x'}e^{i\mu\big((y'-y'')\cdot\eta'+(\sigma'-1)\nu'\big)/x'}
\\&\qquad\quad \times q_{j,m}(\lambda,y',\frac{x'}{\lambda},\eta,\nu)q_{j,n}(\mu,y',\frac{x'}{\mu},\eta',\nu')dx'dy'd\eta
d\nu d\eta' d\nu'
\end{split}
\end{equation*}
where $\sigma=x'/x, \sigma'=x'/x''$, and $q_{j,m}, q_{j,n}$ are
smooth and polyhomogeneous in $\lambda,\mu$ and  compactly
supported in $x'/\lambda,x'/\mu, y'$. In addition, we have  $\nu^2+|\eta|^2\geq 1/4$ and
$\nu'^2+|\eta'|^2\geq 1/4$ on the support of $q_{j,m}q_{j,n}$. By symmetry, we assume $\lambda>\mu$
without loss of generality. Let us introduce the operator
$$\mathcal{L}=i[\lambda(|\nu|^2+|\eta|^2)]^{-1}(x'\eta \partial_{y'}-\nu
x'^2\partial_{x'}),$$ then $\mathcal{L}
e^{-i\lambda\big((y-y')\cdot\eta+(\sigma-1)\nu\big)/x'}=
e^{-i\lambda\big((y-y')\cdot\eta+(\sigma-1)\nu\big)/x'}$. By using
$\mathcal{L}$ to integrate by parts, we gain the factor
$\lambda^{-1}$ since $|\nu|^2+|\eta|^2$ is uniformly bounded from
below; we incur a factor $\mu$ if the derivative falls on
$e^{i\mu\big((y'-y'')\cdot\eta'+(\sigma'-1)\nu'\big)/x'}$, or a
factor of $x'$ or ${x'}^2/\mu$ if the derivative falls on $q_{j,m}$
or $q_{j,n}$. Since $x' \leq \mu$ on the support of $q_{j,m}$, we
have an overall gain of $\mu/\lambda\sim 2^{-|m-n|}$. The
$L^2$-boundness of the spectral projection gives
$\|U^*_{j,m}(0)U_{j,n}(0)\|_{L^2\rightarrow L^2}\leq
C2^{-|m-n|}$.\vspace{0.2cm} A similar argument works if one or both of $m, n$ are $\geq 1$.

A similar estimate is true in the case $N_l + 1 \leq j \leq N$, in which case we are automatically in the high energy case, and with Schwartz kernels supported in the interior of $M^\circ \times M^\circ$.  The argument is also almost exactly the same as the previous case.  We  can write the composition
$$
 \frac{d}{d\lambda} \Big(  \phi\big( \frac{\lambda}{2^j} \big)
Q_j(\lambda)^* \Big)
\frac{d}{d\mu} \Big(Q_j(\mu) \phi\big( \frac{\mu}{2^k} \big)\Big)
$$
in the form
\begin{equation}
\lambda^n \mu^n \iiint e^{i\lambda(z - z'') \cdot \zeta}  q_{j,m}(z'', \zeta, \lambda) e^{i\mu (z'' - z') \cdot \zeta'}  q_{j,n}(z'', \zeta', \mu) \, d\zeta \, d\zeta' \, dz''
\label{he}\end{equation}
where $q_{j,m}$ is supported where $\lambda \sim 2^m$, $|\zeta|^2 \sim 1$ and is such that  $D_z^\alpha D_\zeta^\beta q_{j,m}$ is bounded by $C\lambda^{-1}$.
Assume without loss of generality that $m>n$, i.e. $\lambda > \mu$ on the support of the integrand.
We note that the differential operator
$$\mathcal{L}= \frac{i\zeta \cdot \partial_{z''}}{\lambda |\zeta|^2}$$
leaves $e^{i\lambda(z - z'') \cdot \zeta}$ invariant, so we can apply it to this phase factor in the integral \eqref{he}.  Integrating by parts, the $\partial_{z''}$ derivative either hits the other phase factor $e^{i\mu (z'' - z') \cdot \zeta'}$, in which case we incur a factor of $\mu$, or it hits one of the symbols $q_{i,j}$ or $q_{i, k}$, in which case we incur no factor. So we gain a factor of either $\mu/\lambda \sim 2^{-|j-k|}$, or $1/\lambda$ which is even better since $\mu > 1$ on the support of $q_{j,n}(z'', \zeta', \mu)$. This completes the Cotlar-Stein estimates for $U_i(0)$.

It now follows from the Cotlar-Stein Lemma that $U_j(0)^*$, $j = 2
\dots N$, is well defined as the strong limit of the sequence of
operators
$$
\sum_{|m| \leq l} U_{j,m}(0)^*.
$$
Consider the sequence $\sum_{|m|\leq l} U_{j,m}(t)^*$. We claim that
this sequence converges strongly, and  define $U_j(t)^*$ to be this
limit. To prove this claim, choose an arbitrary $f \in
L^2(M^\circ)$. We have shown that
$$ \lim_{l \to \infty} \  \sup_{L
> l} \big\| \sum_{l \leq |m| \leq L} U_{j,m}(0)^* f \big\|_2^2 =
0.
$$
This is equivalent to
$$
\lim_{l \to \infty} \ \sup_{L > l} \sum_{l \leq |m|, |m'| \leq L} \  \langle U_{j,m}(0) U_{j,m'}(0)^* f , f \rangle  = 0.
$$
But we saw in \eqref{Uijk} that $U_{j,m}(0) U_{j,m'}(0)^* = U_{j,m}(t) U_{j,m'}(t)^*$. Hence we have
$$
\lim_{l \to \infty} \  \sup_{L > l} \sum_{l \leq |m|, |m'| \leq L} \ \langle U_{j,m}(t) U_{j,m'}(t)^* f , f \rangle  = 0,
$$
which implies that
$$
\lim_{l \to \infty} \  \sup_{L > l}   \big\| \sum_{l \leq |m| \leq L} U_{j,m}(t)^* f \big\|_2^2 =  0.
$$
Hence the sequence $\sum_{|m| \leq l} U_{j,m}(t)^* f$ converges for every $f \in L^2(M^\circ)$ as $l \to \infty$, i.e.~ the sequence $\sum_{|m| \leq l} U_{j,m}(t)^*$ converges strongly. We see from this that the integral
$$
\int e^{-it\lambda^2} \specl Q_j(\lambda)^*
$$
converges in the strong topology, hence defines $U_j(t)^*$.
Finally we show that the operator norm of $U_j(t)^*$ is bounded uniformly in $t$. Since $\sum_{|m| \leq l} U_{j,m}(t)^*$ converges in the strong operator topology, we have
$$
\| U_j(t)^* \|  \leq  \sup_{l \to \infty} \big\| \sum_{|m| \leq l} U_{j,m}(t)^* \big\| .
$$
But we have
$$\begin{gathered}
 \big\| \sum_{|m| \leq l} U_{j,m}(t)^* \big\|^2 =
 \Big\| \sum_{|m|, |m'| \leq l} \!\! U_{j,m}(t) U_{j,m'}(t)^* \Big\|
 =  \Big\| \sum_{|m|, |m'| \leq l} \!\! U_{j,m}(0) U_{j,m'}(0)^* \Big\|
\\  =  \big\| \sum_{|m| \leq l} U_{j,m}(0)^* \big\|^2
 \end{gathered}$$
 and the operator norm of $\sum_{|m| \leq l} U_{j,m}(0)^*$ is bounded uniformly in $l$ by the estimates proved above using the Cotlar-Stein Lemma.

\

This completes the proof of Proposition~\ref{prop:L2}. \vspace{0.2cm}

\begin{remark} This argument allows  us to avoid using a Littlewood-Paley type decomposition in this setting.
 Littlewood-Paley type estimates were established in \cite{Bouclet} for asymptotically  conic manifolds in the form of
\begin{equation*}
\|f\|_{L^p}\lesssim
\Big(\sum_{k\geq0}\|\phi(2^{-2k}\Delta_g)f\|^2_{L^p}\Big)^{\frac12}+\|\sum_{k\leq0}\phi(2^{-2k}\Delta_g)f\|_{L^p}.
\end{equation*}
\end{remark}

%In this section, we first use the expression of the spectral measure
%showed in previous section to establish the localized dispersive
%estimate and $L^2$-estimate. As mentioned in the introduction, we
%need the stationary phase argument to exploit the oscillation of
%$e^{it\lambda^2}$ to prove the localized dispersive estimate. For
%the $L^2$-estimates, it is not trivial rather than the Euclidean
%space. Our strategy is to use the Cotlar's lemma to develop the
%almost orthogonal property. Since the estimate for the low energy is
%not a symbol estimate, we have to gain from the pseudodifferential
%operator $Q_j$. The high energy part is treated by establishing a
%symbol estimates. Finally, we use the Keel-Tao's theorem \cite{KT}
%to obtain the long-time Strichartz estimates.

\section{Dispersive estimates}\label{sec:dispersive}
In this section, we use stationary phase and Proposition
\ref{prop:localized spectral measure} to establish the
microlocalized dispersive estimates.

\begin{proposition}[Microlocalized dispersive estimates]\label{dispersive}
Let $Q_j(\lambda)$ be defined in \eqref{Qi}. Then for all integers
$j\geq1$, the kernel estimate
\begin{equation}\label{disper}
\Big|\int_0^\infty e^{it\lambda^2} \big(Q_j(\lambda)
dE_{\sqrt{\mathbf{H}}}(\lambda)Q_j^*(\lambda)\big)(z,z')
d\lambda\Big|\leq C |t|^{-\frac{n}2}
\end{equation}
holds for a constant $C$ independent of points $z,z'\in M^\circ$.
\end{proposition}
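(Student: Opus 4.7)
The plan is to substitute the decomposition from Proposition~\ref{prop:localized spectral measure} into the integrand, reducing \eqref{disper} to the uniform bounds, for all $z,z'\in M^\circ$,
\begin{equation*}
|I_\pm(t)| := \Big|\int_0^\infty e^{i(t\lambda^2 \pm \lambda r)} \lambda^{n-1} a_\pm(\lambda,z,z')\, d\lambda\Big| \leq C|t|^{-n/2},\qquad |I_b(t)| \leq C|t|^{-n/2},
\end{equation*}
where $r:=d(z,z')$ and $I_b := \int_0^\infty e^{it\lambda^2}\lambda^{n-1}b(\lambda,z,z')\,d\lambda$. The essential technique is a dyadic Littlewood--Paley decomposition $\sum_m \phi(\lambda/2^m)\equiv 1$ with $\phi\in C_c^\infty([1/2,2])$, and estimation of each piece by a combination of integration by parts and Van der Corput's lemma, according to whether the scale lies near or far from the critical point of the phase.

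For the $b$ term, the decay bounds \eqref{beans} together with $k$-fold integration by parts using $L:=(2it\lambda)^{-1}\partial_\lambda$ (which satisfies $Le^{it\lambda^2}=e^{it\lambda^2}$) yield, for every $k, K\geq 0$,
\begin{equation*}
|I_b^m| \lesssim 2^{mn}\bigl(1+2^m r\bigr)^{-K}\min\bigl(1,(|t|2^{2m})^{-k}\bigr).
\end{equation*}
Splitting the sum at the scales $2^m\sim|t|^{-1/2}$ and $2^m\sim r^{-1}$, with $k>n/2$ and $K>n$, yields $|I_b|\lesssim \min(|t|^{-n/2},r^{-n})\leq |t|^{-n/2}$: if $r\leq|t|^{1/2}$ the first term controls, while if $r\geq|t|^{1/2}$ then $r^{-n}\leq|t|^{-n/2}$.

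For the oscillatory terms $I_\pm$, assume WLOG $t>0$; the phase $\varphi_\pm(\lambda):=t\lambda^2\pm\lambda r$ satisfies $\varphi''_\pm=2t$ and has stationary point $\lambda_\pm=\mp r/(2t)$, so only $I_-$ has a stationary point in $(0,\infty)$. For $I_+$, $|\varphi'_+|\gtrsim t\lambda+r$, and repeated integration by parts using $(i\varphi'_+)^{-1}\partial_\lambda$ yields $|I_+|\lesssim t^{-n/2}$ upon dyadic summation. For $I_-$, the critical scale is $2^{m_0}\sim r/t$. At this scale, Van der Corput's lemma applied with $|\varphi''_-|=2t$ gives
\begin{equation*}
|I_-^{m_0}| \lesssim t^{-1/2}\cdot 2^{m_0(n-1)}\bigl(1+2^{m_0}r\bigr)^{-(n-1)/2} = t^{-1/2}\left(\frac{r}{\sqrt{t(t+r^2)}}\right)^{n-1} \leq t^{-n/2},
\end{equation*}
since $r/\sqrt{t(t+r^2)}\leq t^{-1/2}$ (split into $r^2\leq t$ and $r^2\geq t$). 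At non-critical scales, $|\varphi'_-|\gtrsim\max(t2^m,r)$, so $k$-fold integration by parts combined with the derivative bounds \eqref{bean} yields
\begin{equation*}
|I_-^m| \lesssim 2^{mn}\bigl(1+2^m r\bigr)^{-(n-1)/2}\min\bigl((t2^{2m})^{-k},(r2^m)^{-k}\bigr),
\end{equation*}
and for $k$ sufficiently large, the sum over $m\neq m_0$ is again bounded by $t^{-n/2}$.

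The main obstacle is that the amplitudes $a_\pm$ are not compactly supported in $\lambda$, so the integrals only converge conditionally and the oscillations of \emph{both} $e^{it\lambda^2}$ and $e^{\pm i\lambda r}$ must be exploited at every dyadic scale. The critical scale depends on the ratio $r/t$, and the key to obtaining the sharp decay $|t|^{-n/2}$ is precisely that the amplitude factor $(1+\lambda r)^{-(n-1)/2}$ in \eqref{bean}, evaluated at $\lambda_0=r/(2t)$, combines with the $t^{-1/2}$ gain from Van der Corput to reproduce the sharp dispersive rate --- this is the oscillatory content of \eqref{bean} that the weaker estimate \eqref{spec-meas-j-1} lacks.
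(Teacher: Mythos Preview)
Your approach is correct and follows essentially the same strategy as the paper: invoke Proposition~\ref{prop:localized spectral measure} to reduce to the integrals $I_b$ and $I_\pm$, treat $I_b$ and $I_+$ by dyadic decomposition plus integration by parts, and treat $I_-$ by separating the critical (stationary-phase) scale from the non-critical scales. The one organizational difference is that the paper, for the $a_\pm$ terms, first performs the substitution $\lambda\mapsto t^{-1/2}\lambda$, which extracts the factor $t^{-n/2}$ at the outset and reduces matters to showing that the remaining integral is bounded \emph{uniformly} in $\bar r:=r/\sqrt{t}$; after this rescaling the critical region $\{|2\lambda-\bar r|\leq 1\}$ has length $O(1)$, so a direct size estimate replaces your appeal to Van der Corput, and the non-critical region is handled by a decomposition in $|2\lambda-\bar r|$ rather than in $\lambda$ itself. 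Your direct dyadic analysis in $\lambda$ achieves the same result but carries the $t$- and $r$-dependence through the summation, which is why you need the two-scale splitting ($2^m\sim r/t$ and $2^m\sim 1/r$); one small point to make explicit is that your non-critical bound $\min((t2^{2m})^{-k},(r2^m)^{-k})$ should also be capped by the trivial bound $1$, since for very small $m$ both IBP gains are worse than the size estimate $|I_-^m|\lesssim 2^{mn}$---this is implicit in your scale-splitting but not written.
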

\begin{proof} The key to the proof is to use the estimates in Proposition
\ref{prop:localized spectral measure}. We first consider $j=1$.
Since the term with $Q_1(\lambda)$ satisfies \eqref{beanQ} with only
the `$b$' term, then we can use the estimate \eqref{beans} to obtain
\begin{equation}\label{4.2}
\begin{split}
\Big|\big(\frac{d}{d\lambda}\big)^{N}\big(Q_1(\lambda)dE_{\sqrt{\mathbf{H}}}(\lambda)Q_1^*(\lambda)\big)(z,z')\Big|\leq
C_N\lambda^{n-1-N}\quad \forall N\in\mathbb{N}.
\end{split}
\end{equation}
Let $\delta$ be a small constant to be chosen later. Recall that we chose $\phi\in
C_c^\infty([\frac12,2])$ such that $\sum_{m \in \ZZ}\phi(2^{-m}\lambda)=1$; we
denote $\phi_0(\lambda)=\sum_{m\leq -1}\phi(2^{-m}\lambda)$. Then
\begin{equation*}
\Big|\int_0^\infty e^{it\lambda^2} \big(Q_1(\lambda)
dE_{\sqrt{\mathbf{H}}}(\lambda)Q_1^*(\lambda)\big)(z,z')
\phi_0(\frac{\lambda}{\delta})d\lambda\Big|\leq
C\int_0^\delta\lambda^{n-1}d\lambda\leq C\delta^n.
\end{equation*}
 We use  integration by parts $N$ times to obtain, using  \eqref{4.2},
\begin{equation*}
\begin{split}
&\Big|\int_0^\infty e^{it\lambda^2}
\sum_{m\geq0}\phi(\frac{\lambda}{2^m\delta})\big(Q_1(\lambda)
dE_{\sqrt{\mathbf{H}}}(\lambda)Q_1^*(\lambda)\big)(z,z')
d\lambda\Big|\\
&\leq \sum_{m\geq 0}\Big|\int_0^\infty
\big(\frac1{2\lambda
t}\frac\partial{\partial\lambda}\big)^{N}\big(e^{it\lambda^2}\big)
\phi(\frac{\lambda}{2^m\delta})\big(Q_1(\lambda)
dE_{\sqrt{\mathbf{H}}}(\lambda)Q_1^*(\lambda)\big)(z,z')
d\lambda\Big|\\& \leq
C_N|t|^{-N}\sum_{m\geq0}\int_{2^{m-1}\delta}^{2^{m+1}\delta}\lambda^{n-1-2N}d\lambda\leq
C_N|t|^{-N}\delta^{n-2N}.
\end{split}
\end{equation*}
Choosing $\delta=|t|^{-\frac12}$, we have thus proved
\begin{equation}\label{4.3}
\begin{split}
&\Big|\int_0^\infty e^{it\lambda^2} \big(Q_1(\lambda)
dE_{\sqrt{\mathbf{H}}}(\lambda)Q_1^*(\lambda)\big)(z,z')
d\lambda\Big|\leq C_N|t|^{-\frac n2}.
\end{split}
\end{equation}

Now we consider the case $j\geq2$. Let $r=d(z,z')$
and $\bar{r}=rt^{-\frac12}$. In this case, we write the kernel using
Proposition \ref{prop:localized spectral measure}
\begin{equation}\label{4.4}
\begin{split}
&\int_0^\infty e^{it\lambda^2} \big(Q_j(\lambda)
dE_{\sqrt{\mathbf{H}}}(\lambda)Q_j^*(\lambda)\big)(z,z')
d\lambda\\
&=\sum_\pm \int_0^\infty e^{it\lambda^2}e^{\pm
ir\lambda}\lambda^{n-1}a_\pm(\lambda,z,z')d\lambda+\int_0^\infty
e^{it\lambda^2}\lambda^{n-1}b(\lambda,z,z')d\lambda  \\&= t^{-\frac
n2}\sum_{\pm} \int_0^\infty e^{i\lambda^2}e^{\pm
i\bar{r}\lambda}\lambda^{n-1}a_\pm(t^{-1/2}\lambda,z,z')d\lambda
+\int_0^\infty e^{it\lambda^2}\lambda^{n-1}b(\lambda,z,z')d\lambda,
\end{split}
\end{equation}
where $a_\pm$ satisfies estimates
\begin{equation*}
\big|\partial_\lambda^\alpha a_\pm(\lambda,z,z') \big|\leq C_\alpha
\lambda^{-\alpha}(1+\lambda d(z,z'))^{-\frac{n-1}2},
\end{equation*}
and therefore
\begin{equation}\label{beans0}
\Big|\partial_\lambda^\alpha \big(a_\pm(t^{-1/2}\lambda,z,z')\big)
\Big|\leq C_\alpha \lambda^{-\alpha}(1+\lambda
\bar{r})^{-\frac{n-1}2}.
\end{equation}
By (1.16), the above term with $b(\lambda,z,z')$ can be estimated by
using the same argument as for $Q_1$. Now we consider first term in RHS
of \eqref{4.4}. We divide it into two pieces using the partition of
unity above. It suffices to prove that there exists a constant $C$
independent of $\bar{r}$ such that
\begin{equation*}
\begin{split}
I^\pm:=&\Big|\int_0^\infty e^{i\lambda^2}e^{\pm
i\bar{r}\lambda}\lambda^{n-1}a_\pm(t^{-1/2}\lambda,z,z')\phi_0(\lambda)d\lambda\Big|\leq
C,\\II^\pm:=& \Big|\sum_{m\geq0}\int_0^\infty e^{i\lambda^2}e^{\pm
i\bar{r}\lambda}\lambda^{n-1}a_\pm(t^{-1/2}\lambda,z,z')\phi(\frac{\lambda}{2^m})d\lambda\Big|\leq
C.
\end{split}
\end{equation*}
The estimate for $I^\pm$ is obvious, since $\lambda\leq 1$.
For $II^{+}$, we use integration by parts. Notice that
$$
L^+ (e^{i\lambda^2 + i \bar{r} \lambda}) =  e^{i\lambda^2 + i \bar{r} \lambda}, \quad L^+ = \frac{-i}{2\lambda + \bar{r}} \frac{\partial}{\partial \lambda}.
$$
Writing
$$
e^{i\lambda^2 + i \bar{r} \lambda} = (L^+)^N (e^{i\lambda^2 + i \bar{r} \lambda})
$$
and integrating by parts, we gain a factor of $\lambda^{-2N}$ thanks to \eqref{beans0}. Thus $II^{+}$ can be estimated by
$$
\sum_{m \geq 0} \int_{\lambda \sim 2^m} \lambda^{n-1-2N} \, d\lambda \leq C.
$$

To treat $II^-$, we introduce a further decomposition, based on the size of $\bar{r} \lambda$. We write
$II^-=II^-_1+II^-_2$, where (dropping the $-$ superscripts and subscripts from here on)
\begin{equation*}
\begin{split}
II_1=&\Big|\sum_{m\geq0}\int e^{i\lambda^2}e^{-
i\bar{r}\lambda}\lambda^{n-1}a(t^{-1/2}\lambda,z,z')\phi(\frac{\lambda}{2^m})
\phi_0(4\bar{r} \lambda) d\lambda\Big|, ~\\II_2=&\Big|\int
e^{i\lambda^2}e^{-
i\bar{r}\lambda}\lambda^{n-1}a(t^{-1/2}\lambda,z,z')
\left(1-\phi_0(\lambda)\right) \big( 1 - \phi_0(4\bar{r} \lambda)
\big) d\lambda\Big|.
\end{split}
\end{equation*}
Let $\Phi(\lambda,\bar{r})=\lambda^2-\bar{r}\lambda$. We first
consider $II_1$. Since the integral for $II_1$ is supported where $\lambda \leq (4 \bar{r})^{-1}$ and $\lambda \geq
1/2$, the integrand is only
nonzero when $\bar{r}\leq1/2$. Therefore $|\partial_\lambda\Phi| =
2\lambda - \bar{r} \geq\frac12\lambda$. Define the operator
$L=L(\lambda,\bar{r})=(2\lambda-\bar{r})^{-1}\partial_\lambda$. By
\eqref{beans0} and using integration by parts, we obtain for $N>n/2$
\begin{equation*}
\begin{split}
II_1\leq&\sum_{m\geq0}\Big|\int
e^{i\lambda^2}e^{-
i\bar{r}\lambda}\lambda^{n-1}a(t^{-1/2}\lambda,z,z')\phi(\frac{\lambda}{2^m})\phi_0(4\bar{r} \lambda) d\lambda\Big|
\\=&\sum_{m\geq0}\Big|\int L^{N}
\big(e^{i(\lambda^2-
\bar{r}\lambda)}\big)\Big[\lambda^{n-1}a(t^{-1/2}\lambda,z,z')\phi(\frac{\lambda}{2^m})\phi_0(4\bar{r}
\lambda) \Big]d\lambda\Big|
\\\leq &C_N\sum_{m\geq0}\int_{|\lambda|\sim
2^{m}}\lambda^{n-1-2N}d\lambda\leq C_N.
\end{split}
\end{equation*}
Finally we consider $II_2$. Here, we replace the decomposition $\sum_m \phi(2^{-m} \lambda)$ with a different decomposition,
based on the size of $\partial_\lambda \Phi$.
\begin{equation*}
\begin{split}
II_2\leq& \Big|\int e^{i\lambda^2}e^{-
i\bar{r}\lambda}\lambda^{n-1}a(t^{-1/2}\lambda,z,z')\big(1-\phi_0(\lambda)\big)\phi_0(2\lambda-\bar{r}) \big( 1 - \phi_0(4\bar{r} \lambda) \big) \, d\lambda\Big|\\
&+\sum_{m\geq0}\Big|\int
e^{i\lambda^2}e^{-
i\bar{r}\lambda}\lambda^{n-1}a(t^{-1/2}\lambda,z,z')
\big(1-\phi_0(\lambda)\big)\phi(\frac{2\lambda-\bar{r}}{2^m})\big( 1 - \phi_0(4\bar{r} \lambda) \big) \, d\lambda\Big|\\:=&II_2^1+II_2^2.
\end{split}
\end{equation*}
If $\bar{r} \leq 10$, then for the integrand of $II_2^1$ to be
nonzero we must have $\lambda \leq 10$, due to the $\phi_0$ factor.
Then it is easy to see that $II_2^1$ is uniformly bounded. If $\bar{r} \geq
10$, we have $\bar{r}\sim\lambda$ since $|2\lambda-\bar{r}|\leq 1$.
Hence, using \eqref{beans0} with $\alpha = 0$,
$$II_2^1\le\int_{|2\lambda-\bar{r}|\leq 1}\lambda^{n-1}(1+\bar{r}\lambda)^{-\frac{n-1}2}d\lambda\leq C.$$
Now we consider the second term. Integrating by parts, we show by
\eqref{beans0}
\begin{equation*}
\begin{split}
II_2^2\leq&\sum_{m\geq0}\Big|\int
e^{i\lambda^2}e^{-
i\bar{r}\lambda}\lambda^{n-1}a(t^{-1/2}\lambda,z,z')
\big(1-\phi_0(\lambda)\big)\phi(\frac{2\lambda-\bar{r}}{2^m})\big( 1 - \phi_0(4\bar{r} \lambda) \big) \, d\lambda\Big|
\\=&\sum_{m\geq0}\Big|\int L^{N}
\big(e^{i(\lambda^2-
\bar{r}\lambda)}\big)\Big[\lambda^{n-1}a(t^{-1/2}\lambda,z,z')\big(1-\phi_0(\lambda)\big)\phi(\frac{2\lambda-\bar{r}}{2^m}) \big( 1 - \phi_0(4\bar{r} \lambda) \big) \Big] \, d\lambda\Big|
\\ \leq &C_N\sum_{m\geq0}2^{-mN}\int_{|2\lambda-\bar{r}|\sim 2^m}\lambda^{n-1}(1+\bar{r}\lambda)^{-\frac{n-1}2}d\lambda .
% \\ \leq &C_N\sum_{j\geq0}2^{-jN}\int_{|2\lambda-\bar{r}|\sim 2^j}\lambda^{n-1}(\bar{r}\lambda)^{-\frac{n-1}2}d\lambda
\end{split}
\end{equation*}
If $\bar{r}\leq 2^{m+1}$, then $\lambda \leq 2^{m+2}$ on the support
of the integrand. The $m$th term can then be estimated by $C_N
2^{-mN} 2^{(m+2)n}$ which is summable for $N > n$.  Otherwise, we
have $\lambda\sim \bar{r}$, which means the integrand is bounded and
we estimate the $m$th term by $C_N 2^{-mN} 2^m$, which is summable
for $N > 1$.  Therefore we have completed the proof of Proposition
\ref{dispersive}.\end{proof}\vspace{0.2cm}

%%%%%%%%%%%%%%%%%%%%%%%%%%%%%%%%%
%%%%%%%%%%%%%%%%%%%%%%%%%%%%%%%%%%
%%%%%%%%%%%%%%%%%%%%%%%%%%%%%%%%%

\section{Homogeneous Strichartz estimates}\label{sec:hom}  We use the $L^2$-estimates and the microlocalized dispersive estimates to conclude the proof
of Theorem \ref{Strichartz}. By Proposition \ref{prop:L2}, we have
for all $t\in\mathbb{R}$ and all $u_0\in L^2$
\begin{equation*}
\|U_j(t)u_0\|_{L^2(M^\circ)}\lesssim \|u_0\|_{L^2(M^\circ)};
\end{equation*}
By Lemma~\ref{BB*},
\begin{equation*} U_j(s)U_j^*(t)f=\int_0^\infty e^{i(s-t)\lambda^2}
Q_j(\lambda)dE_{\sqrt{\mathbf{H}}}(\lambda)Q^*_j(\lambda)f.
\end{equation*}
Hence we have the following decay estimates by Proposition
\ref{dispersive}
\begin{equation*}
\|U_j(s)U_j^*(t)f\|_{L^\infty}\lesssim |t-s|^{-n/2}\|f\|_{L^1}.
\end{equation*}
As a consequence of the Keel-Tao abstract Strichartz estimate in
\cite{KT}, we have %the microlocalized Strichartz estimate
\begin{equation}
\|U_j(t)u_0\|_{L^q(\mathbb{R}; L^r(M^\circ))}\lesssim
\|u_0\|_{L^2(M^\circ)},
\label{micro-Strichartz}\end{equation}
where $(q,r)$ is sharp $\frac n2$-admissible, that is, $q,r\geq2$,
$(q,r,n)\neq(2,\infty,2)$ and $2/q+n/r=n/2$. By the definition of
$U_j(t)$ based on the construction of $Q_j$, we see that
\begin{equation}
e^{it\mathbf{H}}=\sum_{j=1}^N U_j(t).
\label{U_jsum}\end{equation}
Combining \eqref{micro-Strichartz} and \eqref{U_jsum} proves the long-time homogeneous Strichartz estimate.

%%%%%%%%%%%%%%%%%%%%%%%%%%%%%%%%%
%%%%%%%%%%%%%%%%%%%%%%%%%%%%%%%%%
%%%%%%%%%%%%%%%%%%%%%%%%%%%%%%%%%

\section{Inhomogeneous Strichartz estimates}\label{sec:inhom}
In this section, we prove Theorem~\ref{Strichartz-inhom},
including at the endpoint $(q,r)= (\tilde q, \tilde r) = (2,\frac{2n}{n-2})$ for $n \geq 3$. Let
$\mathbf{U}(t)=e^{it\mathbf{H}}: L^2\rightarrow L^2$. We have already proved that
\begin{equation*}
\|\mathbf{U}(t)u_0\|_{L^q_tL^r_z}\lesssim\|u_0\|_{L^2}
\end{equation*} holds for all $(q,r)$ satisfying \eqref{1.1}.
By duality, the estimate is equivalent to
\begin{equation*}
\Big\|\int_{\R}\mathbf{U}(t)\mathbf{U}^*(s)F(s)ds\Big\|_{L^q_tL^r_z}\lesssim\|F\|_{L^{\tilde{q}'}_tL^{\tilde{r}'}_z},
\end{equation*}
where both $(q,r)$ and $(\tilde{q},\tilde{r})$ satisfy \eqref{1.1}.
By the Christ-Kiselev lemma \cite{CK}, we obtain for $q>\tilde{q}'$
\begin{equation}
\Big\|\int_{s<t}\mathbf{U}(t)\mathbf{U}^*(s)F(s)ds\Big\|_{L^q_tL^r_z}\lesssim\|F\|_{L^{\tilde{q}'}_tL^{\tilde{r}'}_z}.
\end{equation}
Notice that $\tilde{q}'\leq 2 \leq q$, therefore we have proved all inhomogeneous
Strichartz estimates except the endpoint
$(q,r)=(\tilde{q},\tilde{r})=(2,\frac{2n}{n-2})$. To treat the
endpoint, we need show the bilinear form estimate
\begin{equation}\label{TFG}
|T(F,G)|\leq \|F\|_{L^2_tL^{r'}_z}\|G\|_{L^2_tL^{r'}_z},
\end{equation}
where $r=2n/(n-2)$ and $T(F,G)$ is the bilinear form
\begin{equation}
T(F,G)=\iint_{s<t}\langle \mathbf{U}(t)\mathbf{U}^*(s)F(s), G(t)\rangle_{L^2}~ dsdt.
\end{equation}

Theorem~\ref{Strichartz-inhom} follows from

\begin{proposition}\label{either} There exists a partition of the identity $Q_j(\lambda)$ on $L^2(M^\circ)$  such that, with $U_j(t)$ defined as in \eqref{Uit},
there exists a constant $C$ such that for each pair $(j,k)$,  either
\begin{equation}\label{bilinear:s<t}
\iint_{s<t}\langle U_j(t)U_k^*(s)F(s), G(t)\rangle_{L^2}~ dsdt\leq C
\|F\|_{L^2_tL^{r'}_z}\|G\|_{L^2_tL^{r'}_z}.
\end{equation}
or
\begin{equation}\label{bilinear:s>t}
\iint_{s>t}\langle U_j(t)U_k^*(s)F(s), G(t)\rangle_{L^2}~ dsdt\leq C
\|F\|_{L^2_tL^{r'}_z}\|G\|_{L^2_tL^{r'}_z}.
\end{equation}
\end{proposition}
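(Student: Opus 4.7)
The plan is to reduce the proposition to a one-sided dispersive estimate for the off-diagonal microlocalized propagators $U_j(t) U_k^*(s)$, whose favorable time-direction depends on the relative geometric position of the wavefront sets of $Q_j$ and $Q_k$, and then feed this into the Keel--Tao machinery restricted to the appropriate half-plane.

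First I would refine the partition from Sections~\ref{sec:low}--\ref{sec:high} into a finer partition $\{Q_j(\lambda)\}$, by splitting the pieces near the characteristic variety according to the sign and size of $\nu$ at the boundary (and of $\zeta \cdot dr$ in the interior), so that each $Q_j$ microlocalizes sharply to a small neighbourhood of one point of the (co)sphere bundle. This mirrors the refined partition used by Guillarmou--Hassell in \cite{GH}. With this refined partition in hand, the diagonal pairs $(j,j)$ are treated by combining the dispersive estimate of Proposition~\ref{dispersive} with the $L^2$ bound of Proposition~\ref{prop:L2} and Keel--Tao, yielding \eqref{bilinear:s<t} (say). The content of the proposition lies in the off-diagonal pairs $j \neq k$.

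Next I would analyze the Schwartz kernel of $Q_j(\lambda) dE_{\sqrt{\mathbf{H}}}(\lambda) Q_k^*(\lambda)$ for $j \neq k$, using the Legendrian distribution description of $dE_{\sqrt{\mathbf{H}}}(\lambda)$ from Propositions~\ref{prop:osc-form-low} and \ref{prop:osc-form-high} together with the composition rules for pseudodifferential and Legendre distributions (as in \cite{GHS2}). The crucial structural point, which is the main obstacle, is that for a refined partition, the composition $Q_j(\lambda) dE_{\sqrt{\mathbf{H}}}(\lambda) Q_k^*(\lambda)$ projects onto a single sheet of the Legendrian $L^{\bfc}$ (respectively $L$), so its kernel takes the form
\begin{equation*}
\lambda^{n-1} e^{i\sigma_{jk}\lambda d(z,z')} a_{jk}(\lambda,z,z') + b_{jk}(\lambda,z,z'),
\end{equation*}
with a \emph{single} sign $\sigma_{jk} \in \{+,-\}$ determined by the direction of the bicharacteristic flow from $\mathrm{WF}'(Q_k)$ to $\mathrm{WF}'(Q_j)$, plus amplitude estimates of the form \eqref{bean}, \eqref{beans}. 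Verifying this single-sheet structure is the analogue, at the level of the spectral measure, of the refined resolvent kernel decomposition carried out in \cite{GH}.

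Given this, I would estimate the integral
\begin{equation*}
\int_0^\infty e^{i(t-s)\lambda^2 + i\sigma_{jk}\lambda d(z,z')} \lambda^{n-1} a_{jk}(\lambda,z,z')\, d\lambda
\end{equation*}
by the stationary phase argument of Section~\ref{sec:dispersive}. The stationary point $\lambda^* = -\sigma_{jk} d(z,z')/(2(t-s))$ lies in $(0,\infty)$ only on the half-plane $\{\sigma_{jk}(t-s) < 0\}$; on that half-plane one recovers the bound $|t-s|^{-n/2}$ exactly as in the proof of Proposition~\ref{dispersive}, while on the opposite half-plane $|\partial_\lambda((t-s)\lambda^2+\sigma_{jk}\lambda d(z,z'))| \geq c(|t-s|\lambda + d(z,z'))$ allows repeated integration by parts to give $O(|t-s|^{-N})$ for any $N$. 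Thus for each pair $(j,k)$ we obtain the dispersive bound $\|U_j(t) U_k^*(s)\|_{L^1 \to L^\infty} \leq C|t-s|^{-n/2}$ on one specific half-plane ($\{s<t\}$ if $\sigma_{jk} = -$, and $\{s>t\}$ if $\sigma_{jk} = +$).

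Finally, on that half-plane, combining this dispersive estimate with the uniform $L^2 \to L^2$ bounds on $U_j(t)$ and $U_k(s)$ from Proposition~\ref{prop:L2} and running the Keel--Tao endpoint argument (Whitney decomposition of the half-plane, dyadic orthogonality, Hardy--Littlewood--Sobolev) yields either \eqref{bilinear:s<t} or \eqref{bilinear:s>t}; crucially, the Keel--Tao argument is purely local to the half-plane over which we integrate, so it applies without modification. The main difficulty throughout is the first structural step identifying the single-sheet nature of $Q_j dE Q_k^*$ and the explicit phase/amplitude structure: once this is in place, the stationary phase and abstract Strichartz steps are essentially parallel to those in Sections~\ref{sec:dispersive} and~\ref{sec:hom}.
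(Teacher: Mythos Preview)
Your overall strategy---refine the partition, identify a favorable time half-plane for each off-diagonal pair, prove a one-sided dispersive estimate there, and feed it into Keel--Tao---matches the paper's. The gap is in your structural claim about the off-diagonal kernel and, as a consequence, in which half-plane is actually favorable.

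You assert that for $j\neq k$ the kernel of $Q_j(\lambda)\,dE_{\sqrt{\mathbf H}}(\lambda)\,Q_k(\lambda)^*$ has the form $\lambda^{n-1}e^{i\sigma_{jk}\lambda d(z,z')}a_{jk}$ with $a_{jk}$ obeying \eqref{bean}. This fails precisely at pairs of conjugate points $(z,z')$, and such pairs are unavoidable once $Q_j$ and $Q_k$ are far apart on the cosphere bundle: there the Legendrian $L$ (or $L^{\bfc}$) does \emph{not} project diffeomorphically to the base, so one cannot use $\pm d(z,z')$ as phase; one is forced to keep extra $v$-variables in the oscillatory integral, and the spectral measure is genuinely larger than the near-diagonal model by a factor $\lambda^{k/2}$, where $k>0$ is the drop in rank of the projection (see the forms \eqref{QiEQj-hi}--\eqref{QiEQj} and the Remark following Lemma~\ref{lem:Qij}). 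Your stationary-phase step on the half-plane $\{\sigma_{jk}(t-s)<0\}$ therefore does not produce $|t-s|^{-n/2}$ in general.

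What the paper does instead is the other half of your dichotomy: it keeps the oscillatory-integral form with extra $v$'s and shows (Lemmas~\ref{lem:sign-low}, \ref{lem:sign}) only that the phase $\Phi$ has a \emph{definite sign} when $Q_j$ is not outgoing-related (resp.\ not incoming-related) to $Q_k$. On the half-plane where $t-s$ has the \emph{same} sign as $\Phi$, the total phase $(t-s)\lambda^2+\lambda\Phi$ has no stationary point in $\lambda$, and repeated integration by parts with $L=\big(2(t-s)\lambda+\Phi\big)^{-1}\partial_\lambda$ gives arbitrary $\lambda$-decay that absorbs the $\lambda^{k/2}$ loss. Thus the dispersive estimate (Lemma~\ref{lem:Qij}) is obtained on the \emph{non-stationary} half-plane, the opposite of the one you single out. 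Your integration-by-parts observation on the ``opposite'' half-plane is in fact the correct mechanism; the fix is to drop the claim that $a_{jk}$ satisfies \eqref{bean}, replace the phase $\pm\lambda d$ by the signed Legendrian phase $\Phi$, and run Keel--Tao on that half-plane.
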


\begin{proof}[Proof of Theorem~\ref{Strichartz-inhom} assuming Proposition~\ref{either}]
We have proved that for all $1\leq j\leq N$,
\begin{equation*}
\|U_j(t)u_0\|_{L^2_tL^r_z}\lesssim\|u_0\|_{L^2},
\end{equation*}
hence it follows by duality that for all $1\leq j,k\leq N$,
\begin{equation}\label{R2}
\iint_{\R^2}\langle U_j(t)U_k^*(s)F(s), G(t)\rangle_{L^2}~ dsdt\leq
C \|F\|_{L^2_tL^{r'}_z}\|G\|_{L^2_tL^{r'}_z}.
\end{equation}
Subtracting \eqref{bilinear:s>t} from \eqref{R2}  shows that
\eqref{bilinear:s<t} holds for every pair $(j,k)$.  Then, by summing
over all $j$ and $k$, we obtain \eqref{TFG}.
\end{proof}

To prove Proposition~\ref{either} we use the following lemma proved in \cite[Lemmas 5.3 and 5.4]{GH}.

\begin{lemma}\label{poi}
The partition of the identity $Q_j(\lambda)$ can be chosen so that
the pairs of indices $(j,k)$, $1 \leq j,k \leq N$, can be divided
into three classes,
$$
\{ 1, \dots, N \}^2 = J_{near} \cup J_{not-out} \cup J_{not-inc},
$$
so that
\begin{itemize}
\item if $(j,k) \in J_{near}$, then $Q_j(\lambda) \specl Q_k(\lambda)^*$ satisfies the conclusions of Proposition~\ref{prop:localized spectral measure};

\item if $(j,k) \in J_{non-inc}$, then $Q_j(\lambda)$ is not incoming-related to $Q_k(\lambda)$ in the sense that no point in the operator wavefront set (microlocal support)
of $Q_j(\lambda)$ is related to a point in the operator wavefront
set of $Q_k(\lambda)$ by backward bicharacteristic flow;

\item if $(j,k) \in J_{non-out}$, then $Q_j(\lambda)$ is not outgoing-related to $Q_k(\lambda)$ in the sense that no point in the operator wavefront set  of
$Q_j(\lambda)$ is related to a point in the operator wavefront set
of $Q_k(\lambda)$ by forward bicharacteristic flow.
\end{itemize}
\end{lemma}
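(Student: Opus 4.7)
The plan is to construct $Q_j(\lambda)$ by refining the operators defined in \eqref{Qi} further in the $\nu$ variable (and angular variables) so that the three-class decomposition can be read off from the relative locations of the microlocal supports $\mathrm{WF}'(Q_j)$. The essential geometric input is the non-trapping hypothesis, which ensures that along every maximally extended null bicharacteristic of $h^2\Delta_g - 1$, the radial momentum function $\nu$ (defined near $\partial M$ by \eqref{numu}) evolves strictly monotonically from $-1$ in the past to $+1$ in the future; this is the standard consequence of non-trapping used throughout \cite{HW1}, \cite{HW}, \cite{GHS1}.

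First I would dispose of $Q_1$. Since $\mathrm{WF}'(Q_1)$ is disjoint from the characteristic variety $\Sigma=\{|\mu|_h^2+\nu^2=1\}$ on which $\specl$ concentrates microlocally, any composition $Q_1(\lambda)\specl Q_k(\lambda)^*$ (or the mirror composition) is rapidly decreasing in $\lambda d(z,z')$, so all such pairs can be placed in $J_{near}$ with only the $b$-term of Proposition \ref{prop:localized spectral measure} present. For $j,k \geq 2$, I would subdivide the symbol supports used in Sections \ref{subsec:lepoi} and \ref{subsec:partition-he} into small patches $\Omega_j \subset {}^{\sca}T^*M$ each characterised by a short $\nu$-interval $[\nu_j^-,\nu_j^+]$ of length less than some fixed small $\delta$, together with small variation in the angular coordinate $y$ and the base point. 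By compactness of the relevant slice of phase space this produces a finite partition $N$ independent of $\lambda$.

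Then I would classify pairs $(j,k)$, $j,k \geq 2$, according to the relative positions of $\Omega_j$ and $\Omega_k$. If the $\nu$-intervals overlap and the remaining coordinates are close, the pair is put in $J_{near}$: for such pairs the joint Legendrian analysis of \cite{GHS2} reviewed in Propositions \ref{prop:osc-form-low}, \ref{prop:osc-form-high} applies to $Q_j\specl Q_k^*$ and yields the representation required by Proposition \ref{prop:localized spectral measure}. If instead $\nu_j^+ < \nu_k^-$ with a fixed gap, then the strict monotonicity of $\nu$ along the flow forbids any backward bicharacteristic starting in $\Omega_j$ from reaching $\Omega_k$, so $(j,k) \in J_{non-inc}$; the symmetric case $\nu_j^- > \nu_k^+$ places the pair in $J_{non-out}$. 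Pairs with overlapping $\nu$-intervals but well-separated in the angular or base variables require one more observation: by non-trapping, a bicharacteristic connecting such patches would have to pass through a compact region of $M^\circ$ in bounded time, and by refining $\delta$ one can arrange that no single bicharacteristic segment connects the two supports in either direction, so these pairs also fall into $J_{non-inc}$ or $J_{non-out}$ according to the direction of flow.

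The hard part is handling the transition $x \sim \epsilon$ and the interior operators indexed $j > N_l$, where $\nu$ is not a natural coordinate. Here I would replace $\nu$ by a global escape function $\psi$ along bicharacteristic flow, which exists because the non-trapping assumption implies that unit-speed geodesics traverse any compact set in uniformly bounded time; any commutator $H_p \psi$ positive on $\Sigma \cap \{\epsilon/4 \leq x \leq 2\epsilon\}$ and matching $\nu$ for $x \leq \epsilon/4$ will do. Partitioning with respect to $\psi$ in the interior and gluing compatibly to the $\nu$-based partition near $\partial M$ produces the desired global partition, and the monotonicity argument above goes through with $\psi$ in place of $\nu$ on the interior, completing the construction.
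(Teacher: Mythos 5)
The paper offers no proof of this lemma: it is cited as Lemmas 5.3 and 5.4 of \cite{GH}, so there is no internal argument to compare against. Your sketch is in the same spirit as that reference --- refine the partition so that each patch localizes to a short interval of a quantity that is monotone along the bicharacteristic flow, then read off the trichotomy from the relative position of the intervals --- and I believe the strategy is sound, but there are genuine gaps that would need to be filled.

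First, the function $\nu$ is only defined near $\partial M$, and its Hamiltonian derivative $H_p\nu = 2(1-\nu^2)+O(x)$ degenerates at the radial sets $\nu=\pm1$. Your claim that two patches with overlapping $\nu$-intervals but well-separated base or angular supports cannot be joined by a bicharacteristic segment implicitly relies on a uniform lower bound for $H_p\nu$ (or $H_p\psi$) to bound the flow time, and such a bound fails near $\nu=\pm1$. The claim is still true there, but for a different reason: on $\Sigma$ one has $|\mu|_h^2 = 1-\nu^2$, so the angular momentum vanishes at the radial sets and the angular variables do not move along a bicharacteristic confined to a thin slab around $\nu=\pm1$. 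You should state this, or state explicitly that the escape function has a derivative bounded below with a controlled degeneration near the radial sets. Second, the existence of a global escape function $\psi$ with $H_p\psi \geq c > 0$ on $\Sigma$ over the compact region is a consequence of non-trapping, but it is a construction, not a tautology; a citation or a sketch is needed. Third, you assert that for $(j,k)\in J_{near}$ the conclusions of Proposition~\ref{prop:localized spectral measure} hold for $Q_j \specl Q_k^*$; but Propositions~\ref{prop:osc-form-low} and~\ref{prop:osc-form-high} as written treat only $j=k$, so one must check that the microlocal support arguments of \cite[Sections 5 and 7]{GHS2} go through when the two wavefront sets are close but distinct. Finally, the clause ``so these pairs also fall into $J_{non-inc}$ or $J_{non-out}$ according to the direction of flow'' is inconsistent with what precedes it: if no bicharacteristic joins the supports in \emph{either} direction, the pair lies in both classes simultaneously, and there is no ``direction'' to choose.
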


We exploit the not-incoming or not-outgoing property of
$Q_j(\lambda)$ with respect to $Q_k(\lambda)$ in the following two
lemmas.

\begin{lemma}\label{lem:sign-low} Let $Q_j(\lambda), Q_k(\lambda)$ be such that $Q_j$ is not outgoing-related to $Q_k$.  Then, for $\lambda \leq 2$, and as a multiple of $|dg dg'|^{1/2} |d\lambda|$,
the Schwartz kernel of $Q_j(\lambda) \specl Q_k(\lambda)^*$
can be expressed  as the sum of a finite number of terms of the form
%\begin{equation}
\begin{gather}
%Q_i(\lambda) \specl Q_j(\lambda)^*=
\lambda^{n-1}  \int_{\R^k} e^{i\lambda\Phi(y,y',\sigma,v)/x}  \big(\frac{x'}{\lambda}\big)^{(n-1)/2 - k/2}  a(\lambda,y,y',\sigma,\frac{x'}{\lambda},v)dv \quad \mathrm{ or } \label{QiEQj-lo}\\
\lambda^{n-1}  \int_{\R^{k-1}} \int_0^\infty e^{i\lambda\Phi(y,y',\sigma,v,s)/x} \big(\frac{x'}{\lambda s}\big)^{(n-1)/2 - k/2}
  s^{n-2} a(\lambda,y,y',\sigma,\frac{x'}{\lambda},v,s) \, ds \, dv  \label{QiEQj-s-lo}
  \end{gather}
  in the region $\sigma = x/x' \leq 2$, $x'/\lambda \leq 2$, or
  \begin{gather}
\quad  \lambda^{n-1} a(\lambda,y,y',\sigma,x'/\lambda) \label{QiEQj-c-lo}
\end{gather}%\end{equation}
in the region $\sigma = x/x' \leq 2$, $x'/\lambda \geq 1$,
where in each case, $\Phi < - \epsilon < 0$ and $a$ is a smooth function compactly supported in the $v$ and $s$ variables (where present), such that $|(\lambda\partial_\lambda)^N a|\leq C_N$ for all $N \in \NN$.
In each case, we may assume that $ k \leq n-1$; if $k=0$ in \eqref{QiEQj-lo}  or $k=1$ in \eqref{QiEQj-s-lo} then there is no variable $v$, and no $v$-integral. The key point is that in each expression, \emph{the phase function is strictly negative}.

If, instead, $Q_j$ is not incoming-related to $Q_k$, then the same
conclusion holds with the reversed sign: the Schwartz kernel can be
written as a finite sum of terms with a strictly positive phase
function.
\end{lemma}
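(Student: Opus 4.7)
The plan is to combine Proposition~\ref{prop:osc-form-low} with the structure of $L^{\bfc}$ as a two-sheeted Legendrian near $N^*\diagb$, and to use the not-outgoing/not-incoming hypothesis to restrict the composition $Q_j(\lambda)\specl Q_k(\lambda)^*$ to a single sheet with a definite sign. The operator wavefront set of $Q_j\specl Q_k^*$ is contained in the subset of $L^{\bfc}$ consisting of those points $(y,y',\sigma,\mu,\mu',\nu,\nu')$ with $(y,\mu,\nu) \in WF'(Q_j)$ and $(y',\mu',\nu') \in WF'(Q_k)$. Via the parametrization \eqref{gamma^2}, the two sheets of $L^{\bfc}$ in a deleted neighbourhood of $N^*\diagb$ correspond to the two orderings of the geodesic parameters $s$ and $s'$, and by property (d) of \eqref{Phi-properties} they are parametrized by $\Phi = +d_{\conic}$ and $\Phi = -d_{\conic}$ respectively. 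The forward bicharacteristic flow relates a source in $WF'(Q_k)$ to a target in $WF'(Q_j)$ exactly along one of these sheets; thus the non-outgoing hypothesis eliminates that sheet and forces $\Phi = -d_{\conic}<-\epsilon<0$ on the remaining one (and symmetrically $+d_{\conic}>\epsilon>0$ in the non-incoming case, once we observe that $d_{\conic}$ is bounded away from zero on the support, since the two microlocal supports are separated).

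Next I would produce the three explicit representations according to the region of $M^2_{k,b}$ on which the amplitude is supported. In the region $x/\lambda, x'/\lambda \leq 2$ and bounded away from $\diagb$, the remaining sheet of $L^{\bfc}$ admits a local parametrization by a phase function $\Phi(y,y',\sigma,v)/x$ with $k\leq n-1$ extra variables as in \eqref{LPhiparam}; composing with the scattering pseudodifferential operators $Q_j, Q_k$ (using \eqref{psi-le}) preserves the Legendre class and gives the form \eqref{QiEQj-lo}, with the prefactor $(x'/\lambda)^{(n-1)/2-k/2}$ being the standard Legendre normalization corresponding to $k$ remaining phase variables. In the region $x'/\lambda \geq 1$, we are in the $\zf$-regime where $\specl$ is polyhomogeneous with leading order $\lambda^{n-1}$ and $L^{\bfc}$ projects diffeomorphically down; composition with $Q_j, Q_k$ leaves a kernel of the non-oscillatory form \eqref{QiEQj-c-lo}, with the $(\lambda \partial_\lambda)^N$ estimates coming from Lemma~\ref{ledb} applied to $Q_j, Q_k$ and the smoothness of $\specl$ in $\lambda$ at low energies. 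In the transitional region approaching the sets $T_\pm$ of \eqref{Tpm}, where a geodesic variable degenerates, I would introduce a polar blowup of the $v$-variables in the form $(v,s) \in S^{k-1}\times [0,\infty)$, which absorbs the degeneracy and yields the density $s^{n-2}\,ds\,dv$ together with rescaled prefactor $(x'/(\lambda s))^{(n-1)/2-k/2}$, giving the form \eqref{QiEQj-s-lo}; this mirrors exactly the polar parametrization already used in \cite[Sections 5--6]{GH}.

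The main obstacle will be verifying uniformly across the three regions that the phase $\Phi$ constructed via these local parametrizations is actually bounded away from zero on the support of the amplitude $a$, not just nonzero pointwise. The global sign will be a direct consequence of the Legendrian structure once we establish that property (d) of \eqref{Phi-properties} extends across the region where the extra variable $s$ (or its polar coordinates in $v$) is introduced. A second, more technical, obstacle is to check that the $v$-support and $s$-support of the amplitudes $a$ in \eqref{QiEQj-lo} and \eqref{QiEQj-s-lo} remain compact while the $(\lambda\partial_\lambda)^N$ derivative bounds continue to hold uniformly; this reduces, by Lemma~\ref{ledb} and the smoothness of the Legendre distribution $\specl$ near the boundary hypersurfaces of $\MMkb$ away from $\bfc, \lb, \rb$, to a finite symbol calculation. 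Finally, I would reduce the incoming-related case to the outgoing-related case by taking adjoints: $(Q_j\specl Q_k^*)^* = Q_k\specl Q_j^*$ and $Q_k$ is not incoming-related to $Q_j$ precisely when $Q_j$ is not outgoing-related to $Q_k$, so the conclusion with reversed sign follows from the case already proven.
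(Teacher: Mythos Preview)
Your proposal has the right overall shape but contains a genuine gap in the sign argument, and also misses the justification of $k \leq n-1$.

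\textbf{The sign of $\Phi$.} You argue that $\Phi = \pm d_{\conic}$ on the two sheets of $L^{\bfc}$ near $N^*\diagb$, and that the not-outgoing hypothesis selects the negative sheet. But the identification $\Phi = \pm d_{\conic}$ from property~(d) of \eqref{Phi-properties} is a \emph{local} statement, valid only in a deleted neighbourhood of $N^*\diagb$ where $L^{\bfc}$ projects $2:1$ to the base. The microlocal support of $Q_j(\lambda)\specl Q_k(\lambda)^*$ for $j \neq k$ need not lie in that neighbourhood at all; it can lie far from the diagonal, near pairs of conjugate points on $\partial M$, where $L^{\bfc}$ does not project diffeomorphically and the phase is not given by $\pm d_{\conic}$. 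Your argument therefore does not cover the whole microlocal support. The paper instead uses the general relation $\Phi|_{d_v\Phi=0} = \nu + \sigma\nu'$ coming from the parametrization formula \eqref{LPhiparam}, and then computes $\nu + \sigma\nu'$ directly from the description \eqref{gamma^2} of $L^{\bfc}$: $\nu = -\cos s$, $\nu' = \cos s'$, $\sigma = \sin s/\sin s'$, giving $\Phi = (\sin s')^{-1}\sin(s-s')$. The not-outgoing hypothesis forces $s < s'$ on the microlocal support (via the microlocal support estimates of \cite[Corollary~5.3]{GHS2}), so $\sin(s-s') < 0$ and $\Phi$ is strictly negative everywhere on $L^{\bfc}$ where it matters, not just near the diagonal. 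This is the missing global computation. Your separate claim that $d_{\conic}$ is bounded away from zero ``since the two microlocal supports are separated'' is also unjustified: the not-outgoing condition does not force $WF'(Q_j)$ and $WF'(Q_k)$ to be disjoint.

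\textbf{The bound $k \leq n-1$.} You state this without argument. The paper proves it by showing that the drop in rank of the projection $L^{\bfc} \to \bfc$ is at most $n-1$: the differentials $dy_1, \dots, dy_{n-1}$ are linearly independent on $L^{\bfc}$ (since they are independent on the initial surface \eqref{initial-surface} and have vanishing Lie derivative along the flow vector field $V_r$), and one of $d\sigma, dy'_1, \dots, dy'_{n-1}$ is also nonvanishing. This is not a consequence of Proposition~\ref{prop:osc-form-low}.

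\textbf{The oscillatory integral forms.} You propose to derive \eqref{QiEQj-lo}--\eqref{QiEQj-c-lo} from Proposition~\ref{prop:osc-form-low} combined with composition by $Q_j, Q_k$. That proposition, however, describes only the \emph{near-diagonal} localized spectral measure $Q_j\specl Q_j^*$; the forms here instead come directly from the description of $\specl$ as a Legendre distribution in the class $I^{m,p;r_{\lb},r_{\rb}}(\MMkb,(L^{\bfc},L^\sharp);\Omegakbh)$ from \cite[Theorem~3.10]{GHS1}, and the form \eqref{QiEQj-s-lo} with the $s$-integral arises from the conic (Legendre-with-boundary) structure near $L^\sharp$, not from a polar blowup of the $v$-variables as you suggest.
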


\begin{remark}
For $\sigma \geq 1/2$, the Schwartz kernel has a similar description, as follows immediately from the symmetry of the kernel under interchanging the left and right variables.
\end{remark}

\begin{proof}
The statement that the Schwartz kernel has the indicated forms above
follows immediately from the description of the spectral measure in
\cite[Theorem 3.10]{GHS1} as a Legendre distribution in the class
$I^{m, p; r_{\lb}, r_{\rb}}(\MMkb, (L^{\bfc}, L^\sharp);
\Omegakbh)$, where $m = -1/2$, $p = (n-2)/2$, $r_{\lb} = r_{\rb} =
(n-1)/2$. The bound on $k$ follows from the fact that $k$ can be
taken as the drop in rank of the projection from $L^{\bfc}$ to the
base $(\partial M)^2 \times (0, \infty)_\sigma$ which is the front
face (that is, the face created by blowup) of $M^2_b$. We claim that
the drop in rank is at most $n-1$, which proves that we may assume
that $k \leq n-1$. To prove this claim, we show that the
differentials $dy_1, \dots dy_{n-1}$ and at least one of $d\sigma,
dy'_1, \dots, dy'_{n-1}$ are linearly independent on $L^{\bf}$. This
can be seen from the description of $L^{\bf}$ as the flowout from
the set
\begin{equation}
\{ (y, y, 1, \mu, -\mu, \nu, -\mu) \mid \nu^2 + h(\mu) = 1 \},
\label{initial-surface}\end{equation}
using the coordinates of \eqref{gamma^2},
by the flow of the vector field $V_r$,  which is the vector field given by $x^{-1}$ times the Hamilton vector field of the principal symbol of $\Delta$ acting in the right variables on $\MMkb$. In fact $V_r = \sin s' \partial_{s'}$ in the coordinates $(s,s')$ on the leaves $\gamma^2$ of \eqref{gamma^2}, and takes the form (see \cite[Eq. (2.26)]{HV2} or \cite[Eq. (3.5)]{GHS1})
$$
2\nu' \sigma \frac{\partial}{\partial\sigma} - 2\nu' \mu' \cdot \frac{\partial}{\partial \mu'} + h' \frac{\partial}{\partial \nu'} + \big( \frac{\partial h'}{\partial \mu'} \frac{\partial}{\partial y'} - \frac{\partial h'}{\partial y'} \frac{\partial}{\partial \mu'} \Big), \quad h' = h(y', \mu') = \sum_{i,j} h^{ij}(y') \mu'_i \mu'_j.
$$
It is clear that $dy_1, \dots, dy_{n-1}$ are linearly independent at the initial set \eqref{initial-surface}. Moreover their Lie derivative with respect to $V_r$ vanishes, so they are linearly dependent on all of $L^{\bfc}$. Also, since $h' + {\nu'}^2 = 1$ on $L^{\bfc}$, either the $\partial_\sigma$ or the $\partial_{y'}$ component of the vector field $V_r$ does not vanish, unless $\sigma = 0$, showing that either $d\sigma$ or one of the $dy'_i$ do not vanish at each point of $L^{\bfc}$ for $\sigma \neq 0$. But it was shown in \cite{HV2} that $L^{\bfc}$ is transversal to the boundary at $\sigma = 0$, which means that $d\sigma \neq 0$ on $L^{\bfc}$ when $\sigma$ is small. This proves the claim.

We next show that $\Phi$ can be taken to be strictly negative. We
use the microlocal support estimates from \cite{GHS2}. Applying
\cite[Corollary 5.3]{GHS2}, we find that the microlocal support of
$Q_j(\lambda) \specl Q_k(\lambda)^*$ is contained in that part of
$L^{\bfc}$ where (in the notation of \eqref{gamma^2}) $s < s'$
(since the initial set \eqref{initial-surface} corresponds to
$s=s'$, and $\partial_s$, respectively $\partial_{s'}$ moves in the
outgoing, resp. incoming, direction along the flow). Repeating the
calculation following \eqref{gamma^2} we see that the value of
$\Phi$ `on the Legendrian' is $\Phi = -\cos s + \sigma \cos s' =
(\sin s')^{-1} \sin(s-s')$, which is strictly negative. By
restricting the support of the amplitude $a$ in \eqref{QiEQj-lo} ---
\eqref{QiEQj-c-lo}, we can assume that $\Phi$ is negative everywhere
on the support of the integrand.
\end{proof}

\begin{lemma}\label{lem:sign} Let $Q_j(\lambda), Q_k(\lambda)$ be such that $Q_j$ is not outgoing-related to $Q_k$.  Then, for $\lambda \geq 1$,
and as a multiple of $|dg dg'|^{1/2} |d\lambda|$, the Schwartz
kernel of $Q_j(\lambda) \specl Q_k(\lambda)^*$
can be written in terms of a finite number of oscillatory integrals
of the form
%\begin{equation}
\begin{gather}
%Q_i(\lambda) \specl Q_j(\lambda)^*=
 \int_{\R^k} e^{i\lambda\Phi(y,y',\sigma,x,v)/x}\lambda^{n-1+k/2} x^{(n-1)/2-k/2}a(\lambda,y,y',\sigma,x,v)dv \quad \mathrm{ or } \label{QiEQj-hi}\\
  \int_{\R^{k-1}} \int_0^\infty e^{i\lambda\Phi(y,y',\sigma,x,v,s)/x}\lambda^{n-1+k/2} \big(\frac{x}{s}\big)^{(n-1)/2 - k/2}
  s^{n-2} a(\lambda,y,y',\sigma,x,v,s) \, ds \, dv  \label{QiEQj-s}
  \end{gather}
in the region $\sigma = x/x' \leq 2$, $x \leq \delta$, or
\begin{gather}
  \int_{\RR^k} e^{i\lambda \Phi(z, z', v)} \lambda^{n-1+k/2} a(\lambda, z, z', v) \, dv \label{QiEQj}
\end{gather}%\end{equation}
in the region $x \geq \delta, x' \geq \delta$,
where in each case, $\Phi < - \epsilon < 0$ and $a$ is a smooth function compactly supported in the $v$ and $s$ variables (where present), such that $|(\lambda\partial_\lambda)^N a|\leq C_N$.
In each case, we may assume that $k \leq n-1$; if $k=0$ in \eqref{QiEQj-hi} or \eqref{QiEQj}, or $k=1$ in \eqref{QiEQj-s} then there is no variable $v$, and no $v$-integral. Again, the key point is that in each expression, \emph{the phase function is strictly negative}.

If, instead, $Q_j$ is not incoming-related to $Q_k$, then the same
conclusion holds with the reversed sign: the Schwartz kernel can be
written as a finite sum of terms with a strictly positive phase
function.
\end{lemma}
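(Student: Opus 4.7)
The plan is to mirror the proof of Lemma~\ref{lem:sign-low}, but working on the high energy space $\mathbf{X} = [0, h_0] \times M^2_b$ using the Legendrian description of $\specl$ from Proposition~\ref{prop:osc-form-high}, together with the microlocal support estimates of \cite[Sec.~7]{GHS2}. There are three ingredients: (i) deriving the oscillatory integral forms \eqref{QiEQj-hi}, \eqref{QiEQj-s}, \eqref{QiEQj}, with the correct powers of $\lambda$; (ii) bounding the fibre dimension $k$ by $n-1$; and (iii) showing the phase is strictly negative (resp.\ positive) on the support of the amplitude when $Q_j$ is not outgoing-related (resp.\ not incoming-related) to $Q_k$.

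For (i), the spectral measure $\specl$ is an intersecting Legendre distribution in the class $I^{m,p;r_{\lb},r_{\rb}}$ associated to $(L, L^\sharp)$ with $m = -1/2$, $p = (n-2)/2$, $r_{\lb}=r_{\rb} = (n-1)/2$; see \cite{HW}. Composing with the zeroth order semiclassical scattering pseudodifferential operators $Q_j(\lambda), Q_k(\lambda)^*$ preserves this class (the high energy analogue of \cite[Lemma 5.2]{GHS2}). In the interior region $x, x' \geq \delta$ we pick up the form \eqref{QiEQj} with prefactor $h^{-(n-1)-k/2} = \lambda^{n-1+k/2}$; in the boundary region $x \leq \delta$, $\sigma \leq 2$ we pick up \eqref{QiEQj-hi}, or \eqref{QiEQj-s} when we introduce the radial variable $s$ to resolve the singularity at $L \cap L^\sharp$ (exactly as in the proof of Lemma~\ref{lem:sign-low}, the $s$-integral appears when passing between coordinate patches on the Lagrangian). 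The powers of $x$, $x'$, $s$ in the amplitudes are read off from the orders $p, r_{\lb}, r_{\rb}$, and the symbol bounds $|(\lambda \partial_\lambda)^N a| \leq C_N$ follow from Lemma~\ref{hedb} since each application of $\lambda \partial_\lambda = -h\partial_h$ preserves the class of semiclassical pseudodifferential operators and hence their composition with $\specl$.

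For (ii), the Legendrian $L$ has dimension $2n-1$ and lives over the $2n$-dimensional base $M^2_b$ (after forgetting the $\tau$ component). The projection from $L$ to the base has rank drop at most $n-1$: $L$ is the flowout by the rescaled Hamilton vector field $V_r$ of the principal symbol of $h^2 \Delta -1$ on the right factor, starting from the set $\{q = q', \zeta = -\zeta'\} \cap (\Sigma \times \Sigma)$; one checks as in Lemma~\ref{lem:sign-low} that $dz_1, \dots, dz_{n-1}$ (interior) or $dy_1, \dots, dy_{n-1}$ (boundary) are preserved by $V_r$ and linearly independent on the initial surface, and that $V_r$ has a nonzero component in at least one additional base direction on account of $|\zeta|_g^2 = 1$. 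Thus $k \leq n-1$ as claimed.

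For (iii), which is the main point, I use the crucial feature of the high energy contact manifold $\RR \times {}^{\Phi}T^*_{\bfc}M^2_b$ (and its interior analogue): the coordinate $\tau$ on the $\RR$-factor is a coordinate function on $L$ and, by the definition of a parametrizing phase function, satisfies
\begin{equation*}
\Psi(z,z',v)\big|_{d_v \Psi = 0} = \tau,
\end{equation*}
where $\tau$ is the geodesic travel time in the representation \eqref{L}, $q = G_\tau q'$. Now by \cite[Lemma 7.6 and Lemma 7.7]{GHS2} (or its direct analogue for the $Q_j \specl Q_k^*$ composition), the microlocal support of $Q_j(\lambda) \specl Q_k(\lambda)^*$ is contained in the subset of $L$ consisting of those $(q, q', \tau)$ with $q \in WF'(Q_j(\lambda))$ and $q' \in WF'(Q_k(\lambda))$. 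If $Q_j$ is not outgoing-related to $Q_k$, then by definition there is no such $(q, q')$ with $\tau \geq 0$; hence $\tau$ is bounded above by some $-\epsilon < 0$ on the entire microlocal support. This gives $\Psi \leq -\epsilon$ on the critical set $\{d_v \Psi = 0\}$, and by shrinking the $v$-support of the amplitude $a$ into a small enough neighbourhood of the critical set (which is possible since the critical set is compact given the compact wavefront supports of $Q_j, Q_k$), we obtain $\Psi < -\epsilon/2$ on the whole support of the integrand. The not-incoming-related case is identical, with $\tau > 0$ replacing $\tau < 0$, giving $\Psi > 0$.

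The main obstacle is the third step: one must verify carefully that the value of the phase function at its critical set genuinely coincides with the bicharacteristic travel time $\tau$, both in the interior representation \eqref{QiEQj} and in the boundary representations \eqref{QiEQj-hi}, \eqref{QiEQj-s}. In the interior this follows directly from the Legendrian parametrization condition $L = \{(z, d_z\Psi, z', d_{z'}\Psi, \Psi) \mid d_v\Psi = 0\}$. At the boundary, the analogous parametrization $L = \{(\ldots, \Psi) \mid d_v\Psi = 0\}$ of Section~\ref{sec:high} again identifies $\Psi|_{d_v\Psi = 0}$ with $\tau$. Once this identification is in place, the sign of the phase function is inherited directly from the geometric not-outgoing/not-incoming condition.
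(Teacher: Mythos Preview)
Your proposal is correct and follows essentially the same approach as the paper's own proof. The paper's proof is extremely terse---it simply says the argument is ``essentially identical'' to that of Lemma~\ref{lem:sign-low}, invokes the Legendre distribution structure from \cite{HW}, and then makes the key observation that the non-outgoing relation forces $\tau < 0$ on the microlocal support via \cite[Section 7]{GHS2}, so that $\Phi = \tau < 0$ on $\{d_v\Phi = 0\}$ and hence everywhere after shrinking the amplitude support---which is exactly your step (iii); your expansion of steps (i) and (ii) just fills in details the paper leaves implicit.
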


\begin{proof}
The proof is essentially identical to that of
Lemma~\ref{lem:sign-low}. The form of the oscillatory integrals
comes from the fact that the spectral measure, for high energies, is
a Legendre distribution in the class $I^{m,  p; r_{\lb}, r_{\rb}}(X,
(L, L^\sharp); \Omega {}^{s\Phi} \Omega^{1/2})$, where the
Lagrangian $L$ is given by  \eqref{L}. The non-outgoing relation
implies, via the microlocal support estimates of \cite[Section
7]{GHS2} that $Q_j(\lambda) dE_{\sqrt{\mathbf{H}}}(\lambda)
Q_k(\lambda)^*$ is microsupported where $\tau < 0$ in the
coordinates of \eqref{L}. Since $\Phi = \tau$ when $d_v \Phi = 0$,
this implies that $\Phi < 0$ when $d_v \Phi = 0$. By restricting the
support of the amplitude close to the set where $d_v \Phi = 0$, we
can assume that $\Phi < 0$ everywhere on the support of the
integrand.
\end{proof}

Next we establish dispersive estimates for $U_j(t) U_k(s)^*$:

\begin{lemma}\label{lem:Qij} We have the following estimates on $U_j(t) U_k(s)^*$:

\begin{itemize}
\item If $(j,k) \in J_{near}$, then for all $t \neq s$ we have
\begin{equation}
\big\|U_j(t)U^*_k(s)\big\|_{L^1\rightarrow L^\infty}\leq C
|t-s|^{-\frac{n}2}, \label{UiUjnear}\end{equation}

\item If $(j,k)$ such that $Q_j$ is not outgoing related to
$Q_k$, and $t<s$, then
\begin{equation}
\big\|U_j(t)U^*_k(s)\big\|_{L^1\rightarrow L^\infty}\leq C
|t-s|^{-\frac{n}2}, \label{UiUj}\end{equation}

\item Similarly, if $(j,k)$ such that $Q_j$ is not incoming
related to $Q_k$, and $s<t$, then
\begin{equation}
\big\|U_j(t)U^*_k(s)\big\|_{L^1\rightarrow L^\infty}\leq C
|t-s|^{-\frac{n}2}. \label{UiUj2}\end{equation}
\end{itemize}
\end{lemma}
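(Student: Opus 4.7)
The plan is as follows. Lemma \ref{BB*} lets us write the Schwartz kernel of $U_j(t)U_k^*(s)$ as
$$
K_{j,k}(t,s,z,z') = \int_0^\infty e^{i(t-s)\lambda^2}\bigl(Q_j(\lambda)\,dE_{\sqrt{\mathbf{H}}}(\lambda)\,Q_k^*(\lambda)\bigr)(z,z')\,d\lambda,
$$
and I would treat the three cases of the Lemma separately.

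In the first case, where $(j,k)$ is near-related in the sense of Lemma \ref{poi}, that lemma asserts that $Q_j(\lambda)\,dE_{\sqrt{\mathbf{H}}}(\lambda)\,Q_k^*(\lambda)$ admits the decomposition \eqref{beanQ} with the same amplitude estimates \eqref{bean}, \eqref{beans} as in the diagonal case; consequently the argument of Proposition \ref{dispersive} applies verbatim and delivers \eqref{UiUjnear}.

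In the second case, where $Q_j$ is not outgoing-related to $Q_k$ and $t<s$, I would substitute the oscillatory integral representations of Lemma \ref{lem:sign-low} (for $\lambda\leq 2$) and Lemma \ref{lem:sign} (for $\lambda\geq 1$). In each representative term, for instance \eqref{QiEQj-hi}, the compound phase in $\lambda$ is $\Theta_\lambda = (t-s)\lambda^2 + \lambda\Phi/x$, and the crucial feature of the non-outgoing regime is $\Phi<-\epsilon<0$ on the support of the amplitude. Since also $t-s<0$, the two terms of $\partial_\lambda\Theta_\lambda = 2(t-s)\lambda + \Phi/x$ have the same sign and add in absolute value, giving $|\partial_\lambda\Theta_\lambda|\geq 2|t-s|\lambda + |\Phi|/x$; in particular there is no stationary point on $\lambda>0$. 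This is the exact analogue of the $II^+$ configuration in the proof of Proposition \ref{dispersive}. Rescaling $\mu = \lambda|t-s|^{1/2}$ extracts an overall factor $|t-s|^{-(n+k/2)/2}$; applying stationary phase in $v$ (legitimate by the nondegeneracy of the Hessian $w_1 A_{jk}$ in \eqref{Phi-properties}(c), which holds on the microlocal support of a non-outgoing pair) produces the complementary factor $(\mu/(x|t-s|^{1/2}))^{-k/2}$; finally a dyadic decomposition in $\mu$ together with $N$-fold integration by parts in the $\mu\gtrsim 1$ pieces (gaining $\mu^{-1}$ per step from the lower bound $|\partial_\mu\text{phase}|\geq 2\mu$) and the direct bound $\int_0^1\mu^{n-1}\,d\mu<\infty$ in the $\mu\lesssim 1$ region completes the estimate and yields \eqref{UiUj}. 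The third case, with $Q_j$ not incoming-related to $Q_k$ and $s<t$, is symmetric: Lemmas \ref{lem:sign-low}, \ref{lem:sign} now give $\Phi>0$ while $t-s>0$, so the same sign-addition mechanism applies and delivers \eqref{UiUj2}.

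The main technical obstacle I anticipate is in the $v$-integration: by \eqref{Phi-properties}(c) the $v$-Hessian is $w_1 A_{jk}$, which degenerates as $|w_1|\to 0$, so one must verify that the refined partition of Lemma \ref{poi} keeps the microlocal support of non-outgoing/non-incoming pairs bounded away from $w_1=0$. Should this fail in a marginal regime, one would fall back on the $\widetilde\Phi=|w_1|^{-1}(\Phi \mp x\,d(z,z'))$ rescaling and $\Upsilon$-cutoff decomposition already used in Section \ref{sec:localized}, reducing to a nondegenerate phase in suitably-scaled variables. The low-energy representations are handled the same way, with $x/\lambda$ playing the role of $x$ in the rescaling.
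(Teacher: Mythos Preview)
Your treatment of the $J_{near}$ case is correct and matches the paper. For the non-outgoing and non-incoming cases you also correctly identify the key mechanism: when $t-s$ and $\Phi$ have the same sign, the $\lambda$-derivative of the total phase never vanishes, and integration by parts in $\lambda$ gives arbitrary decay.

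However, your proposed stationary-phase step in the $v$-variables is both unnecessary and unjustified. You invoke property~\eqref{Phi-properties}(c) for the $v$-Hessian, but that property belongs to the \emph{near-diagonal} phase functions of Propositions~\ref{prop:osc-form-low} and~\ref{prop:osc-form-high}. The phase functions $\Phi$ appearing in Lemmas~\ref{lem:sign-low} and~\ref{lem:sign} are different objects: they are generic local parametrizations of the Legendrian, possibly over regions containing pairs of conjugate points, and they come with no nondegeneracy hypothesis on $d^2_{vv}\Phi$. Indeed, the integer $k$ in those representations is precisely the drop in rank of the projection from $L$ to the base, so $k>0$ \emph{signals} degeneracy; this is exactly why these off-diagonal pieces require a separate argument. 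Your concern about the $w_1\to 0$ degeneracy is therefore misdirected---$w_1$ is a near-diagonal coordinate and plays no role in the non-outgoing/non-incoming representations.

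The paper's argument simply leaves the $v$-integral alone: since the amplitude $a$ is compactly supported in $v$, the integral $\int_{\RR^k}\,dv$ contributes only a constant factor. One then integrates by parts in $\lambda$ only, using the operator
\[
L = \frac{-i}{2(t-s)\lambda^2 + \lambda\Phi}\,\lambda\partial_\lambda .
\]
The sign condition guarantees the denominator is bounded below by $c\lambdabar^{\,2}$ after the rescaling $\lambdabar=\sqrt{|t-s|}\,\lambda$ (in the regime $|t-s|\geq 1$), or by $\epsilon\lambda$ directly (in the regime $|t-s|\leq 1$, where no rescaling is performed). Each integration by parts then gains a full factor of $\lambdabar^{-2}$, respectively $\lambda^{-1}$, and choosing $N$ large enough overcomes the amplitude growth $\lambda^{n-1+k/2}$ since $k\leq n-1$; the dyadic pieces sum. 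No information about the $v$-Hessian is used or needed.
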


\begin{proof} The estimate \eqref{UiUjnear} is essentially proved in Proposition \ref{dispersive}, since we can use
Proposition \ref{prop:localized spectral measure}. Assume that $Q_j$
is not incoming-related to $Q_k$, and consider \eqref{UiUj2}.
 By Lemma~\ref{BB*}, $U_j(t) U_k(s)^*$ is given by
\begin{equation}
\int_0^\infty e^{i(t-s)\lambda^2} \big(Q_j(\lambda)
dE_{\sqrt{\mathbf{H}}}(\lambda)Q^*_k(\lambda)\big)(z,z').
\label{UiUjint}\end{equation} Then we need to show that for $s<t$
\begin{equation}
\Big|\int_0^\infty e^{i(t-s)\lambda^2} \big(Q_j(\lambda)
dE_{\sqrt{\mathbf{H}}}(\lambda)Q^*_k(\lambda)\big)(z,z')
d\lambda\Big|\leq C |t-s|^{-\frac{n}2}.
\end{equation}

\textbf{Case 1, $t-s \geq 1$.} We introduce a dyadic partition of unity in $\lambda$. Let $\phi\in C_c^\infty([\frac12,2])$ be   as in Section~\ref{sec:L2}, such that
$\sum_m\phi(2^{-m}\sqrt{t-s}\lambda)=1$, define
$$\phi_0(\sqrt{t-s}\lambda)=\sum_{m\leq0}\phi(2^{-m}\sqrt{t-s}\lambda),$$ and insert
$$
1 = \phi_0(\sqrt{t-s}\lambda) + \sum_{m \geq 1} \phi_m(\sqrt{t-s}\lambda), \quad \phi_m(\lambda) :=  \phi(2^{-m}\lambda)
$$
 into the integral  \eqref{UiUjint}. In addition, we substitute for  $Q_j(\lambda)
dE_{\sqrt{\mathbf{H}}}(\lambda)Q^*_k(\lambda)$ one of the
expressions in Lemmas~\ref{lem:sign-low} and \ref{lem:sign}. Since
$t-s \geq 1$, for the $\phi_0$ term, only the low energy expressions
are relevant. The estimate  follows immediately from noticing that
these expressions are pointwise bounded by $C \lambda^{n-1}$, using
the fact that $k \leq n-1$ in these expressions.

To treat the $\phi_m$ terms for $m \geq 1$, we substitute again one of the expressions in Lemmas~\ref{lem:sign-low} and \ref{lem:sign}.
For notational simplicity we consider the expression  \eqref{QiEQj}, but the argument is similar in the other cases.   We scale the $\lambda$ variable and obtain the expression
\begin{equation}\begin{gathered}
\int_0^\infty \int_{\R^{k}}e^{i(t-s)\lambda^2}
e^{i\lambda\Phi(z,z',v)}\lambda^{n-1+k/2}a(\lambda,z,z',v)\phi_m(
\sqrt{t-s} \lambda) \, dv \, d\lambda
\\
= (t-s)^{-\frac n2 - \frac{k}{4}}\int_0^\infty \int_{\R^{k}}
e^{i\big(\lambdabar^2+\frac{\lambdabar\Phi(z,z',v)}{\sqrt{t-s}}\big)}\lambdabar^{n-1+k/2}
a(\frac{\lambdabar}{\sqrt{t-s}},y,y',\sigma,v) \phi_m(\lambdabar) \,
dv \, d\lambdabar \end{gathered} \label{intt}
\end{equation}
where $\lambdabar = \sqrt{t-s} \lambda$.
We observe that the overall exponential factor is invariant under the differential operator
$$
L = \frac{-i}{2\lambdabar^2 + \lambdabar \Phi/\sqrt{t-s}} \lambdabar
\frac{\partial}{\partial \lambdabar}.
$$
The adjoint of this is
$$
L^t = -L + \frac{i}{2\lambdabar^2 + \lambdabar \Phi/\sqrt{t-s}} -
i\frac{4\lambdabar^2 + \lambdabar \Phi/\sqrt{t-s}}{(2\lambdabar^2 + \lambdabar
\Phi/\sqrt{t-s})^2}.
$$
We apply $L^N$  to the exponential factors, and integrate by
parts $N$ times. Since $\Phi \geq 0$ according to Lemma~\ref{lem:sign}, and
since we have an estimate $|(\lambdabar \partial_\lambdabar)^N a| \leq
C_N$, each time we integrate by parts we gain a factor $\lambdabar^{-2}
\sim 2^{-2m}$. It follows that  the integral with
$\phi(2^{-m}\lambdabar)$ inserted  is bounded by $(t-s)^{-n/2} 2^{-m(2N-n - k/2)}$
uniformly for $t-s \geq 1$. Hence we prove \eqref{UiUj2} by summing over
$m\geq0$. The argument to prove \eqref{UiUj} is analogous.

\textbf{Case 2, $t-s \leq 1$.} In this case, we use a dyadic decomposition in terms of the original variable $\lambda$. We consider the integral \eqref{UiUjint}, insert the dyadic decomposition
$$
1 = \sum_{m\geq 0} \phi_m(\lambda),
$$
and substitute for  $Q_j(\lambda)
dE_{\sqrt{\mathbf{H}}}(\lambda)Q^*_k(\lambda)$ one of the
expressions in Lemmas~\ref{lem:sign-low} and \ref{lem:sign}.

For the case $m = 0$, the estimate follows immediately from the uniform boundedness of \eqref{QiEQj-lo} --- \eqref{QiEQj-c-lo}.
For the cases $m \geq 1$, we use the expressions in Lemma~\ref{lem:sign} and observe that the overall exponential factor is invariant under the differential operator
$$
L = \frac{-i}{2(t-s)\lambda^2 + \lambda \Phi} \lambda
\frac{\partial}{\partial \lambda}.
$$
The adjoint of this is
$$
L^t = -L + \frac{i}{2(t-s)\lambda^2 + \lambda \Phi} -
i\frac{4(t-s)\lambda^2 + \lambda \Phi}{(2(t-s)\lambda^2 + \lambda
\Phi)^2}.
$$
We apply $L$ $N$-times to the exponential factors, and integrate by
parts. Since $\Phi \geq \epsilon > 0$ according to Lemma~\ref{lem:sign}, and
since we have an estimate $|(\lambda \partial_\lambda)^N a| \leq
C_N$, each time we integrate by parts we gain a factor $\lambda^{-1}
\sim 2^{-m}$. It follows that  the integral with
$\phi(2^{-m}\lambda)$ inserted  is bounded by $ 2^{-m(N-n - k/2)}$
uniformly for $t-s \leq 1$. Hence we prove \eqref{UiUj2} by summing over
$m\geq0$. The argument to prove \eqref{UiUj} is analogous.
\end{proof}

\begin{remark} Notice that, in the cases \eqref{UiUj} and \eqref{UiUj2}, there is a lot of `slack' in the estimates. This is because the sign of $t-s$ has the favourable sign relative to the sign of the phase function, so that the overall phase in integrals such as  \eqref{intt} are never stationary. Then integration by parts give us more decay than needed to prove the estimates. This is important because it overcomes the growth of the spectral measure as $\lambda \to \infty$ at conjugate points: at pairs of conjugate points we have $k > 0$ and we see from, say,  \eqref{QiEQj} that the spectral measure will not obey the localized (near the diagonal) estimates of Proposition~\ref{prop:localized spectral measure}, by a factor $\lambda^{k/2}$. The geometric meaning of $k$ is  the  drop in rank of the projection from $L$ down to $M^2_b$, hence is positive precisely at pairs of conjugate points.
\end{remark}

We now complete the proof of Theorem~\ref{Strichartz-inhom} by proving Proposition~\ref{either}.
\begin{proof}[Proof of Proposition~\ref{either}] We use a partition of the identity as in Lemma~\ref{poi}. In the case that $(j,k) \in J_{near}$,
we have the dispersive estimate \eqref{UiUjnear}. This allows us to
apply the argument of \cite[Sections 4--7]{KT} to obtain
\eqref{bilinear:s<t}. In the case that $(j,k) \in J_{non-out}$, we
obtain \eqref{bilinear:s<t} following the argument in \cite{KT}
since we have the dispersive estimate \eqref{UiUj2} when $s < t$.
Finally, in the case that $(j,k) \in J_{non-inc}$, we obtain
\eqref{bilinear:s>t} since we have the dispersive estimate
\eqref{UiUj} for $s > t$.
\end{proof}

\begin{remark} The endpoint inhomogeneous Strichartz estimate
is closely related to the  uniform Sobolev estimate
\begin{equation}
\| (\mathbf{H} - \alpha)^{-1} \|_{L^r \to L^{r'}} \leq C, \quad r = \frac{2n}{n+2},
\label{unifSob}\end{equation}
where  $C$ is independent of $\alpha \in \CC$.
This estimate was proved by \cite{KRS} for the flat Laplacian, and
by \cite{GH} for the Laplacian on nontrapping asymptotically conic
manifolds (it was also shown in \cite{GH} that \eqref{unifSob} holds for $r \in [2n/(n+2), 2(n+1)/(n+3)]$ with a power of $\alpha$ on the RHS). In fact, it was pointed out to the authors by Thomas Duyckaerts and Colin Guillarmou that the endpoint inhomogeneous Strichartz estimate implies the uniform Sobolev estimate \eqref{unifSob}.
To see this, we choose $w \in C_c^\infty(M^\circ)$ and
$\chi(t)$ equal to $1$ on $[-T, T]$ and zero for $|t| \geq T+1$,
  and let $u(t, z) = \chi(t) e^{i\alpha t} w(z)$.
Then
$$
(i\partial_t + \mathbf{H}) u = F(t, z), \quad F(t, z) := \chi(t)
e^{i\alpha t} (\mathbf{H} - \alpha) w(z) + i \chi'(t) e^{i\alpha t}
w(z).
$$
Applying the endpoint inhomogeneous Strichartz estimate, we obtain
$$
\| u \|_{L^2_t L^{r'}_z} \leq C \| F \|_{L^2_t L^r_z}.
$$
From the specific form of $u$ and $F$ we have
$$
\| u \|_{L^2_t L^{r'}_z} = \sqrt{2T} \| w \|_{L^{r'}} + O(1), \quad
\|F \|_{L^2_t L^r_z} = \sqrt{2T} \| (\mathbf{H} - \alpha) w \|_{L^{r}} + O(1).
$$
Taking the limit $T \to \infty$ we find that
$$
\| w \|_{L^{r'}} \leq C \| (\mathbf{H} - \alpha) w \|_{L^{r}},
$$
which implies the uniform Sobolev estimate.

In the other direction, suppose that the uniform Sobolev estimate
holds. If $u$ and $F$ satisfy \eqref{uF}, then taking the Fourier transform in $t$ we find that
\begin{equation}
(\mathbf{H} - \alpha) \hat u(\alpha, z) = \hat F(\alpha, z).
\label{ft}\end{equation}
Suppose for a moment that the following statement were true: ``Fourier transformation in $t$ is a bounded linear map from $L^2(\RR_t; L^{p}(M^\circ))$ to $L^2(\RR_\alpha; L^p(M^\circ))$ for $p = r', r$''.  Using this and the uniform Sobolev inequality, applied to \eqref{ft}, we would obtain the inhomogeneous Strichartz estimate. Unfortunately, the statement in quotation marks is known to be false, so this argument is purely heuristic. Nevertheless, it illustrates the close relation between the two estimates. It would be interesting to know if there are general conditions under which the two estimates are equivalent.
\end{remark}

\begin{center}

\end{center}

\end{document}